\numberwithin{equation}{section}
\theoremstyle{plain}
\newtheorem{thm}{Theorem}[section]
\newtheorem{cor}[thm]{Corollary}
\newtheorem{lem}[thm]{Lemma}
\newtheorem{prop}[thm]{Proposition}
\theoremstyle{remark}
\newtheorem{rem}{Remark}[section]
\newtheorem{qu}[rem]{Open question}
\theoremstyle{definition}
\newtheorem{dfn}{Definition}[section]
\newcommand{\e}{\mathrm{e}} 
\newcommand{\expp}[1]{\exp\left( #1 \right) } 
\newcommand{\N}{\mathbb{N}}
\newcommand{\R}{\mathbb{R}}
\newcommand{\U}{\mathbb{U}} 
\newcommand\multiset[1] { \{\!\!\{  #1  \}\!\!\} } 
\newcommand\dd{\mathrm{d}}
\newcommand{\Exp}[2][]{\mathbb{E}_{#1}\left[ #2 \right]}
\newcommand{\EE}{\mathbb{E}} %
\newcommand{\Expcond}[2]{\mathbb{E}\left[ \left. #1 ~\right|~ #2 \right]}
\newcommand{\PP}{\mathbb{P}} 
\newcommand{\Prob}[2][]{\mathbb{P}_{#1}\left( #2 \right)}
\newcommand{\ind}[1]{\mathbbm{1}_{\left\{ #1 \right\} } } 
\newcommand{\F}{\mathcal{F}}
\newcommand{\Lc}{\mathcal{L}} 
\newcommand{\prm}[2]{\left\langle #1, #2 \right\rangle }
\newcommand{\Cinfty}{\mathcal C^{\infty}_c(\R_+)} 
\newcommand{\eqdis}{\overset{d}{=}}
\newcommand{\Sd}{\mathcal{S}} 
\newcommand{\Sdtwo}{\mathcal{S} \setminus \mathcal{S}_1} 
\newcommand{\sd}{\mathbf s} 
\newcommand{\Rd}{\mathcal{R}} 
\newcommand{\Rdone}{\mathcal{R}_1} 
\newcommand{\Rdtwo}{\mathcal{R} \setminus \mathcal{R}_1} 
\newcommand{\rr}{\mathbf{r}} 
\newcommand{\co}{c_o^{\downarrow}} 
\newcommand{\dOU}{\theta} 
\newcommand{\levOU}{\ell} 
\newcommand{\cutOU}[2][\levOU]{ #2^{(#1)}}
\newcommand{\cutOUd}[2][\levOU]{ #2^{(#1)}}
\newcommand{\dom}{\mathrm{dom}(\kappa)} 
\newcommand{\partialX}{(0, 0, \ldots) } %
\newcommand{\Xc}{\mathcal{X}} 
\newcommand{\Pm}{\mathrm{P}} %
\newcommand{\Em}{\mathrm{E}} %
\newcommand{\Ps}{\mathbf{P}} 
\newcommand{\Xs}{\mathbf{X}} 
\newcommand{\Xsd}{\mathbf{X}} 
\newcommand{\xs}{\mathbf{x}} %
\newcommand{\Zs}{\mathcal{Z}} 
\newcommand{\lp}[1][p]{\ell^{#1 \downarrow}} 
\newcommand{\tb}{\tau} 
\begin{document}

\begin{frontmatter}

\title{A growth-fragmentation model related to Ornstein-Uhlenbeck type processes}

\runtitle{Ornstein-Uhlenbeck type growth-fragmentation processes}

\begin{aug}

\author{\fnms{Quan} \snm{Shi}\thanksref{a}\ead[label=e1]{quanshi.math@gmail.com}}
\address[a]{University of Oxford, Oxford, OX1 3LB, UK.\\
\printead{e1}}

\runauthor{Q.Shi}

\affiliation{University of Oxford}

\end{aug}

\begin{abstract}
Growth-fragmentation processes describe systems of particles in which each particle may grow larger or smaller, and divide into smaller ones as time proceeds. Unlike previous studies, which have focused mainly on the self-similar case, we introduce a new type of growth-fragmentation which is closely related to L\'evy driven Ornstein-Uhlenbeck type processes. Our model can be viewed as a generalization of compensated fragmentation processes introduced by Bertoin, or the stochastic counterpart of a family of growth-fragmentation equations. We establish a convergence criterion for a sequence of such growth-fragmentations. We also prove that, under certain conditions, this system fulfills a law of large numbers. 
\end{abstract}

\begin{abstract}[language=french]
Les processus de croissance-fragmentation \'etudient l'\'evolution au cours du temps de syst\`emes de particules, dans lesquels la taille de chaque particule peut cro\^itre et d\'ecro\^itre, les particules pouvant parfois se fragmenter. Contrairement aux \'etudes pr\'ec\'edentes, qui se sont concentr\'ees principalement sur les cas auto-similaires, nous introduisons un nouveau mod\`ele qui est associ\'e aux processus d'Ornstein-Uhlenbeck li\'es aux processus de L\'evy. Notre mod\`ele peut \^etre vu comme une g\'en\'eralisation des processus de fragmentation compens\'es introduits par Bertoin, ou la contrepartie stochastique d'une famille d'\'equations de croissance-fragmentation. Nous \'etablissons un crit\`ere de convergence pour une suite de telles croissance-fragmentations, et une loi des grands nombres dans un cas particulier. 
\end{abstract}

\begin{keyword}[class=MSC]
60G51, 60J80
\end{keyword}

\begin{keyword}
\kwd{growth-fragmentation}
\kwd{Ornstein-Uhlenbeck type process}
\kwd{branching particle system}
\kwd{law of large numbers}
\end{keyword}



\end{frontmatter}

\section{Introduction}
Fragmentation processes describe the evolution of particle systems in which each particle may split randomly into smaller ones as time passes, independently of the others; see \cite{Bertoin:Frag-Book} for a comprehensive overview. Recently, Bertoin \cite{Bertoin:CF,Bertoin:GF-Markovian} extended fragmentations to \emph{growth-fragmentation processes}, by allowing the size of each particle to increase or decrease gradually.  
In both (pure) fragmentations and growth-fragmentations, previous research has mainly focused on the \emph{self-similar} case, which means that the particle system behaves in the same way when viewed at different scales on space and time.

In the present work, we propose a new type of growth-fragmentation that possesses a different scaling property, to be given shortly. We name it an \emph{Ornstein-Uhlenbeck (OU) type growth-fragmentation process}. Informally speaking, our model describes a particle system in which the mass of each particle evolves according to the exponential of an \emph{OU type process} $(Z(t), t\geq 0)$ driven by a L\'evy process $\xi$: \begin{equation}\label{Ch4:eq:OU1}
  Z(t) := \e^{-\dOU t} Z(0) + \int_0^t \e^{-\dOU (t-s)} \dd \xi(s),\qquad t\geq 0,
 \end{equation}
where $\dOU\in \R$ and the integral is defined in the sense of a stochastic integral, as the L\'evy process $\xi$ is a semimartingale. If $\xi$ is a Brownian motion, then $Z$ is a well-known Gaussian OU process.
Furthermore, each particle splits randomly into smaller ones according to a \emph{dislocation measure} $\nu$, which is a sigma-finite measure on the \emph{mass-partition space}
\begin{equation}\label{eq:Sd}
 \Sd :=  \left\{\sd := (s_1, s_2, \ldots):~  s_1\geq s_2 \geq \ldots \geq 0, \sum_{i=1}^{\infty}s_i\leq 1  \right\},
 \end{equation}
that satisfies 
\begin{equation}\label{Ch4:eq:nu}
 \nu({(1,0,\ldots)})=0 \quad \text{and}\quad \int_{\Sd} (1-s_1)^2 \nu(\dd \sd) <\infty.
\end{equation}
 For every $\sd =(s_1, s_2, \ldots)\in \Sd$, a particle with mass $x>0$ splits at rate $\nu(\dd \sd)$ into a sequence of particles with masses $(xs_1, xs_2, \ldots)$. Each child fragment continues to evolve in a similar way, independently of the others. 
For $t\ge 0$, let 
\[\Xsd(t):=(X_1(t), X_2(t), \ldots )\]
denote the decreasing sequence of the masses of particles alive at time $t$. Then 
$\Xsd  := \Big(\Xsd(t), t\geq 0\Big)$ is \emph{an OU type growth-fragmentation process}. 
The precise definition of our model is given in Section~\ref{Ch4:sec:construction}.

 Let $\co$ be the space of decreasing null sequences (that converge to $0$), endowed with the $\ell^{\infty}$-norm. 
We prove that our process $\Xsd$ is a $\co$-valued Markov process which possesses a c\`adl\`ag version, and moreover satisfies the following two properties. For every $x\in\R_+ := (0,\infty)$, let $\Ps_{x}$ denote  the law of $\Xsd$ with initial value $\Xsd(0)=(x,0,\ldots)\in \co$. 
\begin{enumerate}[label=(\textbf{P\arabic{*}}), ref=(\textbf{P\arabic{*}}),leftmargin=3.0em]
\item \label{Ch4:P1} (Branching property) For every sequence $\mathbf{x} = (x_1,x_2, \ldots) \in \co$, the process $\Xsd$ starting from $\Xsd(0)= \mathbf{x}$ has the same law as the union of the elements, arranged in decreasing order, of a family of independent OU type growth-fragmentations $(\Xs^{[i]})_{i\geq 1}$, where each $\Xs^{[i]}$ has distribution $\Ps_{x_i}$.
\item \label{Ch4:P2}  (OU property) With $\dOU\in \R$ being the index appeared in \eqref{Ch4:eq:OU1},
for every $x \in \R_+$, the distribution of the rescaled process $(x^{\expp{-\dOU t}} \Xsd (t) )_{t\geq 0}$ under $\Ps_{1}$ is $\Ps_{x}$. 
\end{enumerate}
The branching property indicates that the fragments evolve independently from one another. The OU property is due to the scaling property of the exponential of an OU type process (a direct consequence of \eqref{Ch4:eq:OU1}). 
For comparison, note that a \emph{self-similar growth-fragmentation} $\mathbf{Y}$ (including the case of pure fragmentations) fulfills the same branching property, but a different scaling property: for a certain index $\alpha\in \R$, the rescaled process $(x \mathbf{Y} (x^{\alpha}t) )_{t\geq 0}$ under $\Ps_{1}$ is $\Ps_{x}$; see \cite[Theorem~2]{Bertoin:GF-Markovian} and \cite[Definition~2]{Berestycki:Frag-Ranked}. 
The special case $\theta=0$ of our model coincides with \emph{homogeneous} growth-fragmentations (self-similar with $\alpha=0$). 

Our model is partially motivated by \cite{BertoinBaur} (see also a related work \cite{Moehle:MittagLeffler}), results in which imply that a certain OU type growth-fragmentation naturally arises in dynamical percolation on an infinite recursive tree; see Section~\ref{Ch4:sec:RRT} for details. Besides this motivation, our model may have potential applications, as OU type processes are widely applied in various domains: in biology, they are used in a neuronal model with signal-dependent noise \cite{Lansky:OU-neuronal}; in finance, they are used in an option price model with stochastic volatility \cite{NielsenShephard:1,NielsenShephard:2}, to name just a few.

Since the dislocation measure $\nu$ is allowed to be infinite, branching events can occur with an infinite intensity. Due to this fact, the construction of our model is subtle. Our approach relies on  a truncation procedure introduced by Bertoin~\cite{Bertoin:CF} to build homogeneous growth-fragmentations (which he called \emph{compensated fragmentation processes}). 
Specifically, if we discard the small (in size at birth relative to their parent) fragments, then the truncated process has a finite branching rate, which can be easily built with a genealogical structure. We finally re-incorporate the small fragments by considering the increasing limit. The technical difficulty in adopting this approach is that one needs to check that such a growth-fragmentation does not \emph{locally explode}, that is, for every $x>0$, only a finite number of fragments have size greater than $x$ at every time. This is justified by Theorem~\ref{Ch4:thm:mean}, which relies crucially on the integrability assumption \eqref{Ch4:eq:nu}. See \cite{BertoinStephenson} for a related construction of binary self-similar growth-fragmentations.

It is sometimes more convenient to work with the logarithmic transform of a growth-fragmentation. 
After logarithmic transformation, homogeneous (pure) fragmentations can be viewed as continuous time branching random walks \cite{BertoinRouault}, and homogeneous growth-fragmentations are related with 
branching L\'evy processes \cite{Bertoin:CF}. 
In line with these observations, we first introduce an \emph{OU type branching Markov process} (Definition~\ref{Ch4:dfn:BOUP}), which is similar to a branching random walk, but with a spatial motion given by an OU type process. 
An OU type growth-fragmentation process is just the exponential of an OU type branching Markov process. 
Both the truncation procedure and the non-explosion property mentioned above are established for 
OU type branching Markov processes in Section~\ref{Ch4:sec:BOUP}.

We obtain two major results. We first prove (Theorem~\ref{thm:CV}) the convergence of a sequence of OU type growth-fragmentations when their characteristics converge in some sense. This conclusion generalizes \cite[Theorem~2]{Bertoin:CF}. The other result (Corollary~\ref{Ch4:cor:LLN}) concerns the long-time asymptotic behavior of OU type growth-fragmentations. Roughly speaking, we show, for a particular case, that the (random) empirical measure of particle sizes converges in probability to a deterministic measure. 
This law of large numbers should be compared with the limit theorems for self-similar fragmentations and growth-fragmentations \cite{BertoinGnedin, Dadoun:Asymptotics}, as well as the law of large numbers in the context of branching Gaussian OU processes \cite{AdamczakMilos:CLT} and branching diffusions \cite{EHK:SLLN}. 
 
We also find (Proposition~\ref{Ch4:prop:MOU}) that OU type growth-fragmentations bear a connection with Bertoin's \emph{Markovian growth-fragmentations} \cite{Bertoin:GF-Markovian} and (Proposition~\ref{Ch4:prop:gf-eq}) that they are the stochastic counterparts of a family of (deterministic) growth-fragmentation equations; see \cite{BertoinWatson, DoumicEscobedo, Doumic,MischlerScher} for related works on the latter topic.

The paper is organized as follows. 
Section~\ref{Ch4:sec:BOUP} introduces OU type branching Markov processes. 
Section~\ref{Ch4:sec:OUGFP} studies OU type growth-fragmentations in depth. 
After giving the construction in Section~\ref{Ch4:sec:construction}, 
we present a many-to-one formula and related growth-fragmentation equations in Section~\ref{Ch4:sec:mto}, establish a convergence criterion for a sequence of OU type growth-fragmentations in Section~\ref{Ch4:sec:CV}, and prove a law of large numbers in Section~\ref{Ch4:sec:LLN}. 
Section~\ref{Ch4:sec:Markovian} draws connections to Markovian growth-fragmentations \cite{Bertoin:GF-Markovian}. 
Finally, Section~\ref{Ch4:sec:RRT} offers a remarkable example related to a destruction process of infinite random recursive tree \cite{BertoinBaur}.

\section{OU type branching Markov processes}\label{Ch4:sec:BOUP}
In this section, we first recall some background on OU type processes, and then introduce OU type branching Markov processes.

\subsection{Preliminaries: Ornstein-Uhlenbeck type processes}\label{Ch4:sec:OU}

Let us present some elementary background on Ornstein-Uhlenbeck (OU) type processes driven by L\'evy processes; see \cite{Applebaum:Levy} or \cite[Section~17]{Sato:Levy}. We also refer to \cite{Bertoin:Levy} for properties of L\'evy processes. Implicitly, throughout this work we only consider OU type processes without positive jumps.

Let $\xi$ be a L\'evy process without positive jumps, possibly killed, which is often referred to as a \emph{spectrally negative L\'evy process}. It is characterized by its Laplace exponent $\Phi: [0,\infty) \to \R$ such that
\[\Exp{\e^{q \xi(t)}} = \e^{\Phi(q) t}, \qquad \text{ for all } t, q\geq 0. \]
The function $\Phi$ is continuous and convex on $[0,\infty)$. Furthermore, it is given by the L\'evy-Khintchine formula 
\begin{equation}\label{Ch4:eq:Phi}
\Phi(q) =-k + \frac{1}{2} \sigma^2 q^2 + c q + \int_{(-\infty,0)} \big( \e^{q y}-1 + q (1 - \e^{y})  \big) \Lambda  (\dd y), \qquad q\geq 0,
\end{equation}
where $k\geq 0$, $\sigma \geq 0$, $c \in \R$, and the L\'evy measure $\Lambda$ on $(-\infty, 0)$ satisfies 
\begin{equation}\label{Ch4:eq:mLevy}
 \int_{(-\infty,0)} (|y|^2 \wedge 1) \Lambda (\dd y) <\infty.
\end{equation}
We say that $\xi$ has characteristics $(\sigma, c , \Lambda, k)$. In the L\'evy-Khintchine formula, we can also replace $q (1 - \e^{y})$ in the integral by $-q y \ind{y>-1}$, as often in the literature, then we need to change the drift coefficient $c$. 

Let $\dOU\in \R$, we next define an \emph{Ornstein-Uhlenbeck (OU) type process} $Z$ with characteristics $(\sigma, c , \Lambda, k, \dOU)$ or simply $(\Phi, \dOU)$, starting from $Z(0)= z\in \R$, by
\begin{equation}\label{Ch4:eq:solOU}
  Z(t) = \e^{-\dOU t} z + \int_0^t \e^{-\dOU (t-s)} \dd \xi(s), \qquad t\geq 0.
 \end{equation}
By convention, if $\xi$ is killed at time $\zeta\geq 0$, then $Z(t):= -\infty$ for every $t\geq \zeta$.  When $\dOU>0$, $Z$ is called an \emph{inward} OU type process; respectively, while $\dOU<0$, $Z$ is called an \emph{outward} OU type process.
Note that in the literature, OU type processes often only refer to the inward case ($\dOU>0$). 
Furthermore, it is well-known (\cite[equation (17.2) and Lemma~17.1]{Sato:Levy}) that $Z$ is the pathwise unique solution of the stochastic integral equation 
\begin{equation*}
Z(t) = z + \xi(t) - \dOU \int_0^t Z(s) \dd s,
\end{equation*}
and that there is 
\begin{equation}\label{Ch4:eq:Lap}
\Exp{\exp\big(q Z(t)\big)}= \expp{\e^{-\dOU t} zq + \int_0^t \Phi(q \e^{-\dOU s} ) \dd s }, \qquad \text{for all } t,q \geq 0.
\end{equation}

 The next observation follows plainly from \eqref{Ch4:eq:solOU}. 
\begin{lem}\label{Ch4:lem:addOU}
	If $Z_1$ and $Z_2$ are independent OU type processes with respective characteristics $(\Phi_1, \dOU)$ and $(\Phi_2, \dOU)$, then $Z_1+Z_2$ is an OU type process with characteristics $(\Phi_1+ \Phi_2, \dOU)$. 
\end{lem}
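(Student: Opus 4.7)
The plan is to read off the result directly from the integral representation \eqref{Ch4:eq:solOU}. By definition, for $i=1,2$ we may write
\[
Z_i(t) = \e^{-\dOU t} Z_i(0) + \int_0^t \e^{-\dOU(t-s)}\, d\xi_i(s),\qquad t\geq 0,
\]
where $\xi_1,\xi_2$ are spectrally negative L\'evy processes with Laplace exponents $\Phi_1,\Phi_2$. Since $Z_1$ and $Z_2$ are assumed independent, we may realize them on a common probability space with $\xi_1$ and $\xi_2$ independent. The first step is then to add the two identities and invoke linearity of the stochastic integral to obtain
\[
Z_1(t)+Z_2(t) = \e^{-\dOU t}\bigl(Z_1(0)+Z_2(0)\bigr) + \int_0^t \e^{-\dOU(t-s)}\, d(\xi_1+\xi_2)(s).
\]

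The second step is to identify $\xi:=\xi_1+\xi_2$ as a spectrally negative L\'evy process with Laplace exponent $\Phi_1+\Phi_2$. Independent sums of L\'evy processes are again L\'evy; the sum has no positive jumps because neither summand does; and for $q,t\geq 0$, independence gives $\Exp{\e^{q(\xi_1(t)+\xi_2(t))}} = \e^{(\Phi_1(q)+\Phi_2(q))t}$, so $\xi$ has Laplace exponent $\Phi_1+\Phi_2$. The killing convention is compatible: if $\xi_i$ has killing rate $k_i$ in its L\'evy-Khintchine data \eqref{Ch4:eq:Phi}, then $\Phi_1+\Phi_2$ contains $-(k_1+k_2)$, which matches $\xi$ being killed as soon as either $\xi_1$ or $\xi_2$ is, an event that occurs at the minimum of two independent exponentials, i.e.\ at an exponential time with rate $k_1+k_2$. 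Under this convention $Z_1(t)+Z_2(t) = -\infty$ as soon as either summand reaches $-\infty$, in agreement with the definition of the OU type process driven by $\xi$.

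Combining these two steps, the right-hand side of the displayed identity is exactly the OU type process with characteristics $(\Phi_1+\Phi_2,\dOU)$ started from $Z_1(0)+Z_2(0)$, which is the claim. There is no real obstacle here: the lemma is a direct corollary of the linearity of stochastic integration against semimartingales together with the elementary fact that independent spectrally negative L\'evy processes add both in law and in their Laplace exponents.
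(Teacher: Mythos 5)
Your proof is correct and takes essentially the same approach as the paper: the paper simply remarks that the lemma ``plainly follows from \eqref{Ch4:eq:solOU},'' i.e.\ from the integral representation, which is exactly what you spell out (linearity of the stochastic integral plus the fact that independent spectrally negative L\'evy processes add, with Laplace exponents and killing rates adding). Your care with the killing convention is a welcome, if minor, addition to what the paper leaves implicit.
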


Under certain conditions, an \emph{inward} OU type process converges in distribution to its invariant probability distribution. 
\begin{lem}[{\cite[Theorem~17.5 and 17.11]{Sato:Levy}}]\label{Ch4:lem:inv}
	If $\theta>0$ and $\Lambda$ satisfies 
	\begin{equation}\label{Ch4:eq:invH}
	\int_{(-\infty,- \log 2)} \log |y| ~\Lambda(d y) <\infty,  
	\end{equation}
	then the OU type process $Z$ possesses a unique invariant probability distribution $\Pi$, which is a probability measure on $\R$ with Laplace transform
	\begin{equation*}
	\int_{\R} \e^{qy} \Pi(\dd y) = \expp{\int_0^{\infty}\Phi(\e^{-\dOU s} q) \dd s}, \qquad q\geq 0.
	\end{equation*}
	Moreover, for every bounded and continuous function $g : \R\to \R$ there is 
	\begin{equation*}
	\lim_{t\to\infty}  \EE\big[ g (Z(t))\big]  = \int_{\R} g(y) \Pi(\dd y).
	\end{equation*}
	If \eqref{Ch4:eq:invH} does not hold, then $Z$ does not have any invariant probability distribution. 
\end{lem}

We remark that the invariant probability distribution $\Pi$ is \emph{self-decomposable}, which means that if a random variable $Y$ has law $\Pi$, then for every constant $r\in (0,1)$, there exists an independent random variable $Y^{(r)}$, such that $Y\eqdis r Y + Y^{(r)}$. Conversely, every self-decomposable measure is the invariant probability distribution of a certain OU type process (with possibly positive jumps). See \cite[Definition~15.1 and Theorem~17.5]{Sato:Levy} for details. 

\subsection{OU type branching Markov processes with finite birth-intensity}\label{Ch4:sec:BOUW}

With convention $\e^{-\infty}:=0$, introduce the space 
\[ \Rd :=  \left\{\rr = (r_1, r_2, \ldots)~:~ 0\geq r_1\geq r_2 \geq \ldots\geq -\infty,~ \sum_{i=1}^{\infty}\e^{r_i}\leq 1  \right\},\]
Let $\mu$ be a sigma-finite measure on $\Rd$ that satisfies 
\begin{equation}\label{Ch4:eq:mu}
  \mu( \{(0, -\infty, -\infty,\ldots)\} )=0 \quad \text{and}\quad  \int_{\Rd} (1-\e^{r_1})^2 \mu(\dd \rr) <\infty. 
\end{equation}  
Define $\#\rr:= \sup\{i\ge 1~\colon~ r_i >-\infty\}$, with convention $\sup \emptyset = 0$,  and 
\[
\Rdone :=  \left\{ \rr\in \Rd ~\colon~\#\rr = 1  \right\}.
\]
Then \eqref{Ch4:eq:mu} ensures that the image of the restriction $\mu|_{\Rdone}$, via the map $\rr \to r_1$ from $\Rdone$ to $(-\infty, 0)$, is a L\'evy measure (that satisfies \eqref{Ch4:eq:mLevy}), which shall be denoted by $\Lambda_1$. 

Informally speaking, an \emph{OU type branching Markov process} describes the positions of the atoms in the following system. Initially, there is a single atom at the origin.  
This atom evolves according to a certain OU type process $Z$ whose L\'evy measure is $\Lambda_1$. 
  The branching mechanism is given by $\mu|_{\Rdtwo}$. Specifically, a particle at any position $y\in \R$ splits into two or more particles at $y+ \rr$ with rate $\mu|_{\Rdtwo}(d \rr)$, and for every $i\in \N$, the child born at position $y + r_i$ evolves according to the law of $Z$ (with starting point $Z(0) = y+ r_i$),  independently of the others. Each child particles continues to branch in a similar way. 
Recall from the introduction that such processes are tailored for the purpose to study growth-fragmentations; note that the space $\Rd$ is obtained by the logarithmic transform from the mass-partition space $\Sd$ defined in \eqref{eq:Sd}, and \eqref{Ch4:eq:mu} is in line with \eqref{Ch4:eq:nu}. 

If we also suppose that 
\begin{equation}\label{eq:fbi}
\int_{\Rdtwo} \# \rr~ \mu(\dd \rr)   <\infty, 
\end{equation}
then the OU type branching Markov process is said to have \emph{finite birth-intensity}, and can be constructed as a marked \emph{Ulam-Harris tree} $\U:= \bigcup_{n=0}^{\infty} \mathbb{N}^n$, with $\N:= \{1,2,3,\ldots\}$ and $\mathbb{N}^0 := \{\emptyset\}$ by convention. An element $u\in \U$ is a finite sequence of natural numbers $u = (n_1, \ldots ,n_{|u|})$, where $|u|\in \N$ stands for the generation of $u$. Write $u_- = (n_1, \ldots ,n_{|u|-1})$ for her mother and $uk = (n_1, \ldots n_{|u|} ,k)$ for her $k$-th daughter with $k\in \N$. The following construction is similar to that of  a branching L\'evy process \cite[Definition~1]{Bertoin:CF}. Notice that combining \eqref{Ch4:eq:mu} and \eqref{eq:fbi} yields
\[
\mu(\Rdtwo)\le \int_{\Rdtwo} \# \rr~ \mu(\dd \rr)  + \mu(\{(-\infty, -\infty,\ldots)\}) \le \int_{\Rdtwo} \# \rr~ \mu(\dd \rr) + \int_{\Rd} (1-\e^{r_1})^2 \mu(\dd \rr) < \infty. 
\] 

\begin{dfn}\label{Ch4:dfn:BOUPF}
Let $\dOU\in \R$, $\sigma \geq 0$, $c\in \R$, and $\mu$ be a sigma-finite measure in $\Rd$ such that \eqref{Ch4:eq:mu} and \eqref{eq:fbi} hold. Consider three independent families $(\lambda_u)_{u \in \U}$, $(Z_u)_{u \in \U}$ and $(\Delta a_{ui}, i\in \N)_{u \in \U}$: 
\begin{itemize}
\item $(\lambda_u)_{u \in \U}$ is a family of i.i.d. exponential variables with parameter $\mu (\Rd \backslash \Rd_1)$. 
\item $(Z_u)_{u \in \U}$ is a family of i.i.d. OU type processes, starting from $Z_u(0)=0$, with characteristics $(\psi,\dOU)$, where 
\begin{equation}\label{Ch4:eq:Psi}
\psi(q):= \frac{1}{2} \sigma^2 q^2 + \bigg( c + \int_{\Rdtwo} (1-\e^{r_1})\mu(\dd \rr) \bigg) q 
+ \int_{\Rd_1}  \left(\e^{qr_1}-1 +  q(1-\e^{r_1}) \right) \mu(\dd \rr), \qquad q\geq 0. 
\end{equation} 
\item $(\Delta a_{ui}, i\in \N)_{u \in \U}$ is a family of i.i.d. sequences, each sequence being distributed according to the conditional probability $\mu (\cdot ~|~ \Rdtwo) $.
\end{itemize}
With initial values $b_{\emptyset}=0$ and $a_{\emptyset}=0$, we define recursively
\[ a_{ui} := \e^{-\dOU \lambda_u}a_u+ Z_u(\lambda_u) + \Delta a_{ui}, \quad  b_{ui} := b_u+\lambda_{u} , \qquad \text{for every } u\in \U, i\in \N.\]
For every $u\in \U$ the triple $(a_u, b_u, \lambda_u)$ stands for the position at birth, the birth time and the lifetime respectively of the particle indexed by $u$. 
Note that if $\Delta a_{ui} = -\infty$, then by convention $a_{ui}: = -\infty$, which means that the atom $ui$ (as well as its descendants) is not taken into account.
This particle moves according to $(\e^{-\dOU r }a_u + Z_u(r))_{r\geq 0}$, which has the law of $Z$ with $Z(0)= a_u$ by \eqref{Ch4:eq:solOU}. 
Then the positions of the particles alive at time $t\geq 0$ form a multiset (that allows multiple instances of its elements)
\[\Zs (t):= \multiset{\e^{-\dOU (t-b_u)}a_u + Z_u(t-b_u):~ u\in \U, b_u \leq t < b_u+ \lambda_u} , \quad t\geq 0.\]
The process $\Zs$ is called an \emph{OU type branching Markov process} with (finite birth-intensity and) characteristics $(\sigma, c, \mu, \dOU)$.  
\end{dfn}

\begin{rem}
One can view a multiset $\mathcal I$ as a point measure $\sum_{i\in \mathcal I} \delta_i$, where $\delta$ stands for the Dirac mass. 
\end{rem}

The term $\int_{\Rdtwo} (1-\e^{r_1})\mu(\dd \rr)$ in \eqref{Ch4:eq:Psi}, which is an analogue of the compensation term in the L\'evy-Khintchine formula \eqref{Ch4:eq:Phi}, is used to compensate for the negative jumps in the branching events induced by $\mu|_{\Rdtwo}$. We place it there for the following purposes. First, this is consistent with \cite[Definition~1]{Bertoin:CF}. So for the case $\dOU = 0$,  an OU type branching Markov process with characteristics $(\sigma^2, c, \mu, 0)$ is a \emph{branching L\'evy process} with characteristics $(\sigma^2, c, \mu)$. 
Second, this induces an important embedding property that we shall now present. For each $\levOU\geq 0$, we cut an OU type branching Markov process $\Zs$ with characteristics $(\sigma, c, \mu, \dOU)$ at level $\levOU$, by keeping at each dislocation the child particle which is the closest to the parent, and by suppressing the other child particles if and only if its distance to the position of the parent at death is larger than or equal to $\levOU$. Let $B(\levOU)\subset \U$ be the set of individuals that are killed by this cutting operation, so $u=(u_1, \ldots, u_{|u|}) \in B(\levOU)$ if and only if
\begin{equation*}
\Delta a_{u_1, \ldots, u_j} \leq -\levOU \text{ and } u_j \geq 2 \text{ for some } j = 1, \ldots ,|u|.
\end{equation*}
For every $r\in[-\infty, 0]$, set
\begin{equation*}
  \cutOU{r}:=\begin{cases}
    r  & \text{ if } r >-\levOU, \\
    -\infty & \text{otherwise}.
  \end{cases}
\end{equation*}
Then for every $\rr=(r_1, r_2, r_3, \ldots) \in \Rd$, we define 
\begin{equation}\label{Ch4:eq:cut}
  \cutOU{\rr}:= (r_1, \cutOU{r_2}, \cutOU{r_3}, \ldots).
\end{equation}
 Let $\cutOU{\mu}$ be the image of $\mu$ by the map $\rr \mapsto \cutOU{\rr}$.

 \begin{lem}[Key embedding property]\label{Ch4:lem:cut}
   The truncated process 
   \begin{equation}\label{Ch4:eq:truncate}
\cutOU{\Zs}(t) := \multiset{\e^{-\dOU (t-b_u)}a_u + Z_u(t-b_u):~ u\in \U, u\not\in B(\levOU), b_u \leq t < b_u+ \lambda_u}, \qquad t\geq 0
   \end{equation}
is an OU type branching Markov process with characteristics $(\sigma, c, \cutOU{\mu}, \dOU)$.
 \end{lem}
It is not difficult to prove Lemma~\ref{Ch4:lem:cut} by using similar arguments to those of an analogous result \cite[Lemma~3]{Bertoin:CF} for branching L\'evy processes. We include the proof of Lemma~\ref{Ch4:lem:cut} in Appendix~\hyperref[sec:A]{A} for the sake of completeness. 
  
 For the particular case with $\levOU=0$, at each branching event we only keep the closest child, and discard all the others. Therefore, at each time $t\geq 0$ there remains at most one particle, called the \emph{selected atom}. With notation of Definition~\ref{Ch4:dfn:BOUPF}, the position of the selected atom is given by
 \begin{equation*}
   Z_*(t):=  \e^{-\dOU (t-b_{\bar{1}_n})}a_{\bar{1}_n} +  Z_{\bar{1}_n}(t-  b_{\bar{1}_n}),\qquad   t\in [ b_{\bar{1}_n}, b_{\bar{1}_n} +\lambda_{\bar{1}_n}), 
 \end{equation*}
 where $\bar{1}_n:= (1,1,\ldots,1)\in \N^n$ for every $n\geq 0$.

 \begin{lem}\label{Ch4:lem:Wselect}
 	 The position of the selected atom $Z_*$
 	is an OU type process with characteristics  $(\Phi_*, \dOU)$, where
 	\begin{equation}\label{Ch4:eq:PhiZ}
 	 \Phi_*(q) = \frac{1}{2} \sigma^2 q^2 + c q 
 	+ \int_{\Rd}  \left(\e^{qr_1}-1 +  q(1-\e^{r_1}) \right) \mu(\dd \rr), \qquad q\geq 0. 
 	\end{equation}
 \end{lem}
The proof of  Lemma~\ref{Ch4:lem:Wselect} is deferred to Appendix~\hyperref[sec:A]{A}. 
Let us now introduce the \emph{cumulant} $\kappa: [0,\infty)\to (-\infty,\infty]$, which will play an important role in this work:   
\begin{equation}\label{Ch4:eq:kappaZ}
     \kappa(q) :=\Phi_*(q)+\int_{\Rd}\sum_{i=2}^{\infty} \e^{q r_i}\mu(\dd \rr)= \frac{1}{2}\sigma^2 q^2 + c q + \int_{\Rd} \left( \sum_{i=1}^{\infty} \e^{q r_i} -1  +q (1-\e^{r_1}) \right) \mu (\dd \rr), \qquad q\ge 0,
\end{equation}  
where $\Phi_*(q)$ is as in \eqref{Ch4:eq:PhiZ}. If \eqref{Ch4:eq:mu} and \eqref{eq:fbi} hold, then $\kappa$ is finite and continuous. 
\begin{lem}\label{Ch4:prop:meanZf}
Let $\Zs$ be an OU type branching Markov process with characteristics $(\sigma, c, \mu, \dOU)$ and suppose that \eqref{eq:fbi} holds.   
Then for every $t\ge 0$ and $q\geq 0$, we have
\begin{equation*}
  \Exp{\sum_{z\in \Zs(t)} \e^{qz} } = \expp{ \int_0^t \kappa(q \e^{-\dOU s}) \dd s }.
\end{equation*}  
\end{lem}

 \begin{proof}
With notation in Definition~\ref{Ch4:dfn:BOUPF}, write $\lambda_{\emptyset}$ for the lifetime of the ancestor and $(a_i:= Z_{\emptyset}(\lambda_{\emptyset})+\Delta a_i, i\in \N)$ for the sequence of positions of the first generation at birth. 
Consider the sub-population generated by the particle $i\in \U$, i.e.
\[\Zs^i (t):= \multiset{\e^{-\dOU (t+\lambda_{\emptyset} -b_{iv})}a_{iv} + Z_{iv}(t+\lambda_{\emptyset}-b_{iv}):~ v\in \U, b_{iv} \leq t + \lambda_{\emptyset}< b_{iv}+ \lambda_{iv}} , \qquad t\geq 0.\]
Conditionally on $(a_i, i\in \N)$, we deduce by \eqref{Ch4:eq:solOU} and Definition~\ref{Ch4:dfn:BOUPF} that the sequence of processes $\big( \Zs^i , i\in \N \big)$ are independent, and each $\Zs^i$ has the same law as the process $(\e^{-\dOU t} a_i + \Zs(t))_{t\geq 0}$. 
Let $m(q, t):= \Exp{\sum_{z\in \Zs(t)} \e^{qz} }$. The decomposition at $\lambda_{\emptyset}$ yields
\begin{eqnarray*} 
   &&  m(q, t) \\
& =& \Prob{\lambda_{\emptyset}>t} \Exp{\e^{ q Z_{\emptyset}(\lambda_{\emptyset})}}  + \Exp{\ind{\lambda_{\emptyset}\leq t}  \sum_{i=1}^{\infty} \expp{ q (Z_{\emptyset}(\lambda_{\emptyset})+\Delta a_i)\e^{-\dOU(t-\lambda_{\emptyset})} } \sum_{z\in \Zs^i(t-\lambda_{\emptyset})} \e^{qz} }\\
& =& \e^{- \mu(\Rdtwo)t} \e^{\int_0^t \psi(q\e^{-\dOU r}) \dd r } +  \int_0^{t}\e^{- \mu(\Rdtwo) s} \e^{\int_{t-s}^t \psi(q\e^{-\dOU r}) \dd r} m(q, t-s) \dd s \int_{\Rdtwo} \sum_{i=1}^{\infty} \e^{q\e^{-\dOU (t-s)} r_i} \mu( \dd \rr). 
\end{eqnarray*}
Changing variable in the integral by $t-s \mapsto s$, we have 
\[   \e^{-\int_0^t \psi(q\e^{-\dOU r}) \dd r } \e^{ \mu(\Rdtwo)t} m(q, t) = 1 + \int_0^{t} \e^{\mu(\Rdtwo) s}  \e^{-\int_0^s \psi(q\e^{-\dOU r}) \dd r } m(q, s)  \Big(\kappa(q \e^{-\dOU s})- \psi(q\e^{-\dOU s})+ \mu(\Rdtwo)\Big) \dd s.\]
Solving this integral equation yields the desired identity. 
 \end{proof}
  
\subsection{OU type branching Markov processes}\label{Ch4:sec:BOUPI}
We next relax the finite birth-intensity assumption \eqref{eq:fbi}, only suppose that \eqref{Ch4:eq:mu} holds, and define OU type branching Markov processes in this more general setting.   Along the lines of \cite[Definition~2]{Bertoin:CF}, our approach relies on the key embedding property, Lemma~\ref{Ch4:lem:cut}. 
 Specifically, for every $\levOU\geq 0$, write $\cutOU{\mu}$ for the image of $\mu$ by the map $\rr \mapsto \cutOU{\rr}$, then we have 
\begin{equation}\label{eq:el}
\cutOU{\mu}(\rr~\colon~ \# \rr > \e^{\levOU})\le \mu(\rr~\colon~ r_{\lceil \e^{\levOU} \rceil} > \e^{\levOU})\le \mu(\rr~\colon~ \sum_{i\ge 1} \e^{r_i} > \lceil \e^{\levOU} \rceil \e^{-\levOU} >1) = 0. 
\end{equation}
Appealing to this fact and \eqref{Ch4:eq:mu}, we hence deduce that \eqref{eq:fbi} holds for $\cutOU{\mu}$: 
\[
\int_{\Rdtwo}\#\rr \cutOU{\mu} (\dd \rr)  \le \lceil \e^{\levOU} \rceil  \mu( \cutOU{\rr} \not\in \Rd_1) 
=\lceil \e^{\levOU} \rceil \mu\big(r_1 = -\infty ~\text{or}~ r_2>-\levOU\big) \leq \lceil \e^{\levOU} \rceil \mu(1-\e^{r_1} > \e^{-\levOU}) <\infty. 
\]
By Lemma \ref{Ch4:lem:cut} and Kolmogorov's extension theorem, we can build a family of processes on the same probability space, which we still denote by $(\cutOU{\Zs})_{\levOU \geq 0}$, such that each $\cutOU{\Zs}$ is an OU type branching Markov process with characteristics $(\sigma, c, \cutOU{\mu}, \dOU)$, and  
\[
\cutOU[\levOU']{(\cutOU{\Zs})} =\cutOU[\levOU']{\Zs} \quad \text{ for every } \levOU' <\levOU, 
\]
where $\cutOU[\levOU']{(\cutOU{\Zs})}$ denotes the process obtained by cutting $\cutOU{\Zs}$ at level $\levOU'$.  

\begin{dfn}\label{Ch4:dfn:BOUP}
Suppose that \eqref{Ch4:eq:mu} holds. In the notation above, we define 
\[
\Zs(t):= \biguplus_{\levOU \in \R} \cutOU{\Zs}(t),\qquad t\geq 0,
\] 
where $\biguplus$ means the union of multisets.  
Then $\Zs$ is called an \emph{OU type branching Markov process} with characteristics $(\sigma, c , \mu, \dOU)$. For every $\levOU\ge 0$, we refer to $\cutOU{\Zs}$ as the truncated process at level $\levOU$. 
\end{dfn}

 \begin{rem}\label{rem:genealogy}
Unlike the finite birth-intensity case, the branching OU type process with infinite birth-intensity does not possess an obvious genealogical structure. Nevertheless, one should be able to build a genealogical structure by using the approach in \cite{SW-tilting} or \cite{BM-blp}. 
 \end{rem}

 The next statement proves that there is no \emph{(local-)explosion}, that is, for every $w\in \R$  and every time $t\ge 0$, only a finite number of the elements of $\Zs(t)$ are larger than $w$. 
Let us still define $\kappa$ as in \eqref{Ch4:eq:kappaZ}, but when $\mu (\Rdtwo)=\infty$, $\kappa$ can possibly take the value $+\infty$; in fact, we observe that, under \eqref{Ch4:eq:mu}, 
\[
  q\in\dom := \left\{ q\geq 0:~\kappa(q)<\infty \right\} \quad \text{~if and only if ~}\quad \int_{\Rd}\sum_{i=2}^{\infty} e^{q r_i} ~\mu(\dd \rr) <\infty.
  \]
 This infers that $\dom$ is a right-unbounded interval. 
  As $\sum_{i=2}^{\infty}e^{q r_i}\leq (1-e^{r_1})^q$ for $q\geq 2$, condition \eqref{Ch4:eq:mu} ensures that $[2, \infty)\subset \dom$. 
Besides, $\kappa$ is continuous and convex in $\dom$.

\begin{thm}\label{Ch4:thm:meanZ}
   Let $\alpha\ge 0$ and suppose that  $\kappa(\alpha)<\infty$. 
Then for every $t\ge 0$ and $q\geq \alpha (1 \vee \e^{\dOU t})$, we have
\begin{equation*}
  \EE \bigg[\sum_{z\in \Zs(t)} \e^{qz} \bigg] = \exp \Big( \int_0^t \kappa(q \e^{-\dOU s}) \dd s \Big). 
\end{equation*}
\end{thm}

 \begin{proof}
 For every $\levOU \geq 0$, recall that the truncated process $\cutOU{\Zs}$ with characteristics $(\sigma, c, \cutOU{\mu}, \dOU)$ has cumulant 
   \begin{equation*}
     \cutOU{\kappa}(q) = \frac{1}{2}\sigma^2q^2 + c q + \int_{\Rd} \left(\e^{q r_1} + \sum_{i=2}^{\infty}\ind{r_i > -\levOU} \e^{q r_i} -1  +q (1-\e^{r_1}) \right) \mu (\dd \rr), \qquad q\geq 0.
   \end{equation*}
Since $\mu$ fulfills \eqref{Ch4:eq:mu}, both \eqref{Ch4:eq:mu} and \eqref{eq:fbi} hold for $\cutOU{\mu}$. Then Lemma~\ref{Ch4:prop:meanZf} yields
\[ \EE \bigg[ \sum_{z\in \cutOU{\Zs}(t)} \e^{qz} \bigg]= \exp \Big( \int_0^t \cutOU{\kappa}(\e^{-\dOU s} q) \dd s \Big), \qquad q\geq 0. \]
Letting $\levOU\to \infty$, it is plain that for every $p\geq \alpha$, there is $\kappa(p)<\infty$ and 
$\lim_{\levOU \to \infty}\uparrow \cutOU{\kappa}(p) = \kappa(p)$. 
We hence deduce the claim by monotone convergence.
 \end{proof}

 \begin{rem}\label{rem:general}
The OU type branching Markov processes built here are tailored to make connections with growth-fragmentations. The same construction would be equally applicable for a more general setting: an atom could move according to any Markov process (with possibly positive jumps), and $\mu$ could be a measure on the space of all point measures on $\R$. The crucial point is to find a non-explosion condition, which should be a proper intergrable condition similar to \eqref{Ch4:eq:mu}; see also \cite[Equation (1.3)]{BM-blp} and \cite[Theorem~2]{Bertoin:GF-Markovian} for non-explosion conditions in various circumstances.  
 \end{rem}

\section{OU type growth-fragmentation processes}\label{Ch4:sec:OUGFP}

\subsection{The model and its basic properties}\label{Ch4:sec:construction}
We are now ready to define OU type growth-fragmentation processes. 
Let $\sigma\geq 0$, $c\in \R$, $\dOU\in \R$ and $\nu$ be a sigma-finite measure on the space of mass-partitions $ \Sd$, fulfilling \eqref{Ch4:eq:nu}.   
Write $\mu$ for the image of $\nu$ by the map $ (s_1, s_2, \ldots) \mapsto (\log s_1, \log s_2, \ldots)\in \Rd$, then $\mu$ is a sigma-finite measure on $\Rd$, and \eqref{Ch4:eq:nu} ensures that $\mu$ satisfies \eqref{Ch4:eq:mu}. Hence we are allowed to construct by Definition~\ref{Ch4:dfn:BOUP} an OU type branching Markov process $\Zs$ with characteristics $(\sigma, c , \mu, \dOU)$.
Recall that $\co$ is the space of all decreasing null sequences endowed with the $\ell^{\infty}$-distance, i.e. $\|\xs - \mathbf{y} \|_{\infty} = \sup_{i\in \N}|x_i -y_i|$ for $\xs= (x_1, x_2, \ldots)\in \co$ and $\mathbf{y}= (y_1, y_2, \ldots)\in \co$. Theorem~\ref{Ch4:thm:meanZ} enables us to give the following definition. 

\begin{dfn}\label{Ch4:dfn:OUGF}
For every $t\geq 0$, the elements of $\multiset{\exp(z):~ z\in \Zs(t) }$ can be rearranged in a decreasing null sequence 
$$\Xsd(t):= (X_1(t), X_2(t), \ldots)\in \co.$$
The process $\Xsd:=(\Xsd(t), t\geq 0)$ is called an \emph{OU type growth-fragmentation process} with characteristics $(\sigma, c , \nu, \dOU)$.
\end{dfn}

Roughly speaking, $\sigma\geq 0$ describes the fluctuations of the size, the constant $c\in \R$ represents the deterministic dilation (resp. erosion) coefficient when $c>0$ (resp. $c<0$). The measure $\nu$ is called the \emph{dislocation measure}. For every $\sd\in \Sd$, a fragment of size $x>0$ splits into a sequence of fragments $x \sd$ at rate $\nu(\dd \sd)$. The constant $\dOU\in \R$ characterizes the speed at which the size of a fragment evolves towards (when $\dOU >0$) or away from (when $\dOU <0$) the value $1$ (as the central location of an OU type process is $0$).

\begin{rem}
In the following, an OU type growth-fragmentation $\Xsd$ is always assumed (without loss of generality) to start from one fragment of unit size, i.e. $\Xsd(0):= (1,0,0,\ldots)$, unless otherwise specified.   
\end{rem}
\begin{rem}
  When $\dOU=0$, an OU type growth-fragmentation with characteristics $(\sigma, c , \nu, 0)$ is a compensated fragmentation with characteristics $(\sigma, c , \nu)$ in the sense of \cite[Definition~3]{Bertoin:CF}. To avoid duplication, this case will be implicitly excluded hereafter. 
\end{rem}

Theorem~\ref{Ch4:thm:meanZ} can be easily transferred to OU type growth-fragmentations. 
  Correspondingly, the \emph{cumulant} of $\Xsd$ is given by
  \begin{equation}\label{Ch4:eq:kappaOU}
    \kappa(q) := \frac{1}{2}\sigma^2 q^2 + c q + \int_{\Sd} \bigg( \sum_{i=1}^{\infty} s_i^q -1  +q (1-s_1) \bigg) \nu (\dd \sd), \qquad q\geq 0. 
  \end{equation}
By the discussion before Theorem~\ref{Ch4:thm:meanZ}, as \eqref{Ch4:eq:nu} holds, we still have that $\dom$ is a right-unbounded interval containing $[2,\infty)$, and that $\kappa$ is convex and continuous on $\dom$. 

\begin{thm}\label{Ch4:thm:mean}
For every $t\geq 0$, $\alpha\in \dom$ and $q\geq \alpha (1 \vee \e^{\dOU t})$, we have
\begin{equation*}
  \Exp{\sum_{i=1}^{\infty} X_i(t)^q} = \expp{ \int_0^t \kappa(q \e^{-\dOU s}) \dd s }. 
\end{equation*}
\end{thm}
\begin{proof}
It follows directly from Theorem~\ref{Ch4:thm:meanZ}. 
\end{proof}

For every $\levOU\ge 0$, we refer to the exponential of the truncated process $\cutOU{\Zs}$ (rearranged in decreasing order) as the \emph{truncated} OU type growth-fragmentation $\cutOUd{\Xs}$.  
When $\levOU =0$, in the truncated system $\cutOUd[0]{\Xs}$ there is at most one fragment at any time, called \emph{the selected fragment} of $\Xsd$; however, it is not necessarily the largest one in the system.

\begin{lem}[Selected fragment]\label{Ch4:lem:selected}
The size of the selected fragment $(X_*(t),~t\geq 0) $ is the exponential of an OU type process with characteristics $(\Phi_*, \dOU)$, where 
\begin{equation}\label{Ch4:eq:Phi-selected}
\Phi_*(q) := \frac{1}{2} \sigma^2 q^2 + c q 
+ \int_{\Sd}  \big(s_1^q-1 +  q(1-s_1) \big) \nu(\dd \sd), \qquad q\geq 0.
 \end{equation}  
\end{lem}
\begin{proof}
	The law of $\log X_*$ is given by Lemma~\ref{Ch4:lem:Wselect}.
\end{proof}

With the help of Theorem~\ref{Ch4:thm:mean}, we shall establish some fundamental properties of $\Xsd$ in the rest of this subsection. 
We first prove that $\Xsd$ is a time-homogeneous Markov process. In this direction, let us define a family of probability measures. 
Specifically, let $\alpha\in \dom$ and $\xs= (x_1, x_2, \ldots) \in \lp[\alpha]\subset \co$, 
where $\lp[\alpha]$ denotes the space of decreasing null sequences with finite $\ell^{\alpha}$-norm, i.e. $\|\xs \|_{\ell^{\alpha}} := (\sum_{i=1}^{\infty} |x_i|^{\alpha} )^{\frac{1}{\alpha}}<\infty$. Let $(\Xs^{[j]}, j\in \N)$ be a sequence of i.i.d. copies of $\Xsd$. 
We have for every $t\geq 0$ and $q\geq \alpha(\e^{\dOU t}\vee 1)$ that 
\begin{equation*}
  \Exp{\sum_{j\geq 1} \sum_{i\geq 1} \left| x_{j}^{\e^{-\dOU t}}  X^{[j]}_i(t) \right|^q }
 = \expp{-\int_0^t \kappa(q \e^{-\dOU s}) \dd s }  \sum_{j\geq 1} |x_j|^{q\e^{-\dOU t}} <\infty,
\end{equation*}
so the elements (repeated according to their multiplicity) of $\{ x_j^{\e^{-\dOU t}} X^{[j]}_i(t),~ i, j\in \N \}$ can be rearranged in decreasing order. Write $\Ps_{\xs}$ for the law of the resulting process on $\co$. 
\begin{prop}[Markov property]\label{Ch4:prop:branching}
For every $s\ge 0$, conditionally on $(\Xsd(r), 0\leq r\leq s)$, the process $(\Xsd(t+s), t\geq 0)$ on $\co$ has distribution $\Ps_{\Xsd(s)}$.
\end{prop}
This statement clearly ensures that $\Xsd$ fulfills the properties \ref{Ch4:P1} and \ref{Ch4:P2} in the introduction.

\begin{proof}
We first derive from Theorem~\ref{Ch4:thm:mean} that $\Xsd(s)\in \lp[2 (1 \vee \e^{\dOU s})]$. 
Since $2 (1 \vee \e^{\dOU s})\in \dom$ always holds, the law $\Ps_{\Xsd(s)}$ is indeed well-defined. 

For every $\levOU \geq 0$, consider the \emph{truncated} OU type growth-fragmentation $\cutOUd{\Xs}$. 
It is plain from Definition~\ref{Ch4:dfn:BOUPF} that $\cutOUd{\Xs}$ fulfills the claimed Markov property. This observation and Theorem~\ref{Ch4:thm:mean} entail that the Markov property also holds for $\Xsd$. 
See \cite[proof of Proposition~2]{Bertoin:GF-Markovian} for similar arguments and we omit the details.  
\end{proof}

\begin{rem}
It would be interesting to characterize all c\`adl\`ag $\co$-valued processes that possess properties \ref{Ch4:P1} and \ref{Ch4:P2}. 
It is intuitive to guess that all such processes are the exponential of OU type branching Markov processes (with possibly positive jumps and general offspring distribution as discussed in Remark~\ref{rem:general}). 
 For the homogeneous case $\theta=0$, this has been confirmed by \cite{BM-blp}.  
 However, the extension of this result to the general case is beyond the scope of this paper and will be taken up separately. 
\end{rem}
We next obtain the following non-negative martingales, which should be compared with the well-known \emph{additive martingales} in the context of (pure) fragmentations \cite{BertoinRouault} or branching random walks \cite{Biggins:1977}.
\begin{prop}[Additive martingales]\label{Ch4:prop:mart} 
Let $\Xsd$ be an OU type growth-fragmentation with cumulant $\kappa$ and starting point $\Xsd(0)=(x,0,0,\ldots)$. 
  \begin{enumerate}[label=(\roman*)]
  \item If $\dOU<0$, then for every $q\in \dom$, the process
\[\bigg( x^{-q \e^{-\theta t}} \exp \Big(-\int_0^t \kappa(q \e^{-\dOU s}) \dd s \Big)\sum_{i=1}^{\infty} X_i(t)^q , \quad t\geq 0 \bigg)\quad  \text{is a martingale}.\]

\item   If $\dOU > 0$, then for every $\alpha \in \dom$, the process
\[ \bigg(  x^{-\alpha}\exp \Big( -\int_0^t \kappa(\alpha \e^{\dOU s}) \dd s \Big)\sum_{i=1}^{\infty} X_i(t)^{\alpha \e^{\dOU t}} , \quad t\geq 0\bigg)\quad  \text{is a martingale}.\]
  \end{enumerate}
\end{prop}
\begin{proof}
We deduce from Theorem~\ref{Ch4:thm:mean} and Proposition~\ref{Ch4:prop:branching} that both processes have a constant mean value, which is $1$. Then the martingale property follows from Proposition~\ref{Ch4:prop:branching}. 
\end{proof}

\begin{rem}\label{rem:spine}
The (non-negative) additive martingale induces a natural change of measures. Using the methods developed in \cite{SW-tilting}, this would enable us to develop the spinal techniques introduced in the seminal work \cite{Spine}, which are important tools in the study of branching processes. A potential application is to determine whether the limit of the additive martingale is degenerate, for example. 
Note that to define a spine, we also need a genealogical structure (see Remark~\ref{rem:genealogy}). 
\end{rem}

\begin{prop}[Feller-type property] \label{Ch4:prop:Feller} 
Let $\alpha \in \dom$ and suppose that a sequence $\xs_n \to \xs_{\infty}$ in $\lp[\alpha]$. Then for every $t\geq 0$, there is the weak convergence
\[ ( \Ps_{\xs_n}(s), s\in [0,t]) \underset{n\to \infty}{\Longrightarrow} (\Ps_{\xs_{\infty}}(s), s\in [0,t])\]
 in the sense of finite dimensional distributions on $\lp[q]$ for every $q\geq \max( \alpha(\e^{\dOU t}\vee 1) ,1)$.
\end{prop}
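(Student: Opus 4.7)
The approach would be a coupling argument based on the construction of $\Ps_{\xs}$ given just before the proposition. Fix on a single probability space a family $(\Xs^{[j]})_{j\geq 1}$ of i.i.d.\ copies of $\Xsd$ (each starting from the unit partition). For every initial configuration $\xs\in\lp[\alpha]$, the process with law $\Ps_{\xs}$ is realized, at each time $s$, as the decreasing rearrangement of the multiset $\{x_j^{\e^{-\dOU s}} X^{[j]}_i(s):i,j\geq 1\}$. Using the same $(\Xs^{[j]})_{j\geq 1}$ for $\xs_n$ and $\xs_\infty$ provides a natural coupling of $\Ps_{\xs_n}$ and $\Ps_{\xs_\infty}$, and this is the coupling I would work with throughout.

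The heart of the argument is an $L^q$-bound on the distance between the two coupled processes at a fixed time $s\in[0,t]$. Writing $\mathbf{F}_n(s)$ and $\mathbf{F}_\infty(s)$ for the two rearranged sequences, the contraction property of the decreasing rearrangement on $\ell^q$ for $q\geq 1$ (Hardy--Littlewood--P\'olya) and the independence of the $\Xs^{[j]}$'s from the deterministic prefactors give, after taking expectations,
\begin{equation*}
\Exp{\|\mathbf{F}_n(s) - \mathbf{F}_\infty(s)\|_{\ell^q}^q}
\leq \sum_{j\geq 1}\bigl|(x_n)_j^{\e^{-\dOU s}} - (x_\infty)_j^{\e^{-\dOU s}}\bigr|^q\,\Exp{\sum_{i\geq 1} X^{[j]}_i(s)^q}.
\end{equation*}
By Theorem~\ref{Ch4:thm:mean}, the inner expectation equals $\exp\bigl(\int_0^s \kappa(q\e^{-\dOU r})dr\bigr)$, finite and uniform in $j$, since $q\geq \alpha(1\vee\e^{\dOU t})\geq \alpha(1\vee\e^{\dOU s})$.

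It remains to show that the scalar sum $\sum_j|(x_n)_j^\beta - (x_\infty)_j^\beta|^q$ tends to $0$ uniformly in $\beta=\e^{-\dOU s}$ for $s\in[0,t]$. Convergence in $\lp[\alpha]$ yields a uniform bound $M$ on all sequences, and one splits into two cases. When $\beta\geq 1$ (i.e.\ $\dOU\leq 0$), the mean value inequality $|a^\beta-b^\beta|\leq\beta M^{\beta-1}|a-b|$ combined with $|a-b|^q\leq(2M)^{q-\alpha}|a-b|^\alpha$ (valid since $q\geq \alpha$) handles the sum. When $\beta\leq 1$ (i.e.\ $\dOU\geq 0$), one uses $|a^\beta-b^\beta|^q\leq|a-b|^{\beta q}$ followed by $|a-b|^{\beta q}\leq(2M)^{\beta q-\alpha}|a-b|^\alpha$, which requires $\beta q\geq \alpha$, i.e.\ $q\geq\alpha\e^{\dOU s}$. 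The hypothesis $q\geq\alpha(\e^{\dOU t}\vee 1)$ is precisely what makes both bounds hold uniformly in $s\in[0,t]$.

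For the finite-dimensional convergence at times $0\leq t_1<\cdots<t_k\leq t$, the previous estimate gives $L^q$-, hence in-probability-, convergence of $\mathbf{F}_n(t_l)$ to $\mathbf{F}_\infty(t_l)$ in $\lp[q]$ for each $l$, all under the same coupling; joint convergence in probability, and thus weak joint convergence in $(\lp[q])^k$, is then immediate. The main obstacle I anticipate—beyond careful bookkeeping on the exponents $\alpha,\beta,q$—is justifying the Hardy--Littlewood--P\'olya contraction for decreasing rearrangements of \emph{countable} multisets with finite $\ell^q$-mass. This is handled by a truncation to finitely many coordinates (where the contraction is classical), followed by passage to the limit using the uniform moment bound from Theorem~\ref{Ch4:thm:mean} to control the tails.
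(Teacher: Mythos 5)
Your proof is correct and follows essentially the same route as the paper: the same coupling via a common family $(\Xs^{[j]})_{j\geq 1}$, the same Hardy--Littlewood--P\'olya contraction for decreasing rearrangements in $\ell^q$, and the same reduction to a scalar estimate $\sum_j|(x_n)_j^{\e^{-\dOU s}}-(x_\infty)_j^{\e^{-\dOU s}}|^q\to 0$ split by the sign of $\dOU$, with Theorem~\ref{Ch4:thm:mean} supplying the finite moment constant. The only differences are cosmetic bookkeeping on the exponents (the paper uses the concavity bound $|a^\beta-b^\beta|\leq|a-b|^\beta$ for $\dOU>0$ and a Lipschitz-type bound with constant $C(t)$ for $\dOU<0$, reaching the same conclusion), so nothing of substance distinguishes the two arguments.
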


\begin{proof}
The idea is from the proof of \cite[Corollary~2]{Bertoin:CF}, but different estimations are needed for our case. Consider a sequence $(\Xs^{[j]}, j\in \N)$ of i.i.d. copies of $\Xs$. As $q\geq \alpha(\e^{\dOU t}\vee 1)$. It follows from Theorem~\ref{Ch4:thm:mean} that 
 \begin{equation}\label{Ch4:eq:feller:proof}
   \Exp{\sum_{j=1}^{\infty}\sum_{i=1}^{\infty} \left| (x_{n,j}^{\e^{-\dOU t}}-x_{\infty,j}^{\e^{-\dOU t}})  X^{[j]}_i(t) \right|^q }
 = \expp{-\int_0^t \kappa(q \e^{-\dOU s}) \dd s }  \sum_{j=1}^{\infty} |x_{n,j}^{\e^{-\dOU t}}-x_{\infty,j}^{\e^{-\dOU t}}|^q .
 \end{equation}
If $\dOU>0$, as the function $x\mapsto x^{\e^{-\dOU t}}$ is concave, then for every $j\geq 1$ there is
\[
|x_{n,j}^{\e^{-\dOU t}}-x_{\infty,j}^{\e^{-\dOU t}}| \leq |x_{n,j}-x_{\infty,j}|^{\e^{-\dOU t}}.
\]
We next consider the case $\dOU<0$. Since $\xs_n \to \xs_{\infty}$ in $\lp[\alpha]$, we may assume that for every $n\geq 1$, there is $|x_{n,j}-x_{\infty,j}|<1$ for every $j\geq 1$, so $ \|\xs_n \|_{\ell^{\infty}} \leq  \|\xs_{\infty} \|_{\ell^{\infty}}+1$. 
Therefore, with a constant $C(t):=\e^{-\dOU t}(\|\xs_{\infty} \|_{\ell^{\infty}}+1)^{\e^{-\dOU t} -1}$, we have
\[
|x_{n,j}^{\e^{-\dOU t}}-x_{\infty,j}^{\e^{-\dOU t}}| \leq C(t)|x_{n,j}-x_{\infty,j}|,\qquad ~\text{for every}~j\in\N. 
\]
Combining these observations and that $\xs_n \to \xs_{\infty}$ in $\lp[\alpha]$, we deduce from \eqref{Ch4:eq:feller:proof} that 
\[
\lim_{n\to \infty}\Exp{\sum_{j= 1}^{\infty} \sum_{i= 1}^{\infty} \left| (x_{n,j}^{\e^{-\dOU t}}-x_{\infty,j}^{\e^{-\dOU t}})  X^{[j]}_i(t) \right|^q } = 0. 
\]
Write $\mathbf{x}^{\downarrow}$ and $\mathbf{y}^{\downarrow}$ for the decreasing rearrangements of two sequences $\mathbf{x}$ and $\mathbf{y}$ in $\ell^q$. As the function $x\mapsto x^q$ is convex for $q\geq 1$, it follows from \cite[Theorem~3.5]{LiebLoss:Analysis} that $\|\mathbf{x}^{\downarrow}-\mathbf{y}^{\downarrow}\|^q_{\ell^q}\leq \|\mathbf{x}- \mathbf{y}\|^q_{\ell^q}$. As a consequence, there is 
\[
 \left\|\big(x_{n,j}^{\e^{-\dOU t}} X^{[j]}_i(t)\big)^{\downarrow}- \big(x_{\infty,j}^{\e^{-\dOU t}}  X^{[j]}_i(t)\big)^{\downarrow} \right\|^q_{\ell^q}
 \leq \sum_{j=1}^{\infty} \sum_{i=1}^{\infty} \left| (x_{n,j}^{\e^{-\dOU t}}-x_{\infty,j}^{\e^{-\dOU t}})  X^{[j]}_i(t) \right|^q ,
\]
which leads to
\[
 \lim_{n\to \infty}\Exp{\left\|\big(x_{n,j}^{\e^{-\dOU t}} X^{[j]}_i(t)\big)^{\downarrow}- \big(x_{\infty,j}^{\e^{-\dOU t}}  X^{[j]}_i(t)\big)^{\downarrow} \right\|_{\ell^q} } = 0. 
\]
From the description of $\Ps_{\xs_n}$ and $\Ps_{\xs_{\infty}}$, we deduce the Feller-type property. 
\end{proof}

We finally establish the sample path regularity. 
 \begin{prop}[C\`adl\`ag path]\label{Ch4:prop:cadlag}
Let $\alpha \in \dom$, $T\geq 0$ and $q\geq \max(\alpha (\e^{\dOU T}\vee 1),1)$. Then the process $(\Xsd(t),t\in [0,T])$ possesses a c\`adl\`ag version in $\lp[q]$. Thus, the non-stopped process $(\Xsd(t),t\in [0,\infty)$ possesses a c\`adl\`ag version in $\co$.  
\end{prop}

 \begin{proof}
We follow the same arguments as in \cite[proof of Proposition~2]{Bertoin:CF}. For every $\levOU\geq 0$, let $\cutOU{\Zs}$ be the truncated OU type branching Markov process and $\cutOU{\Xs}$ be its associated growth-fragmentation, then it follows plainly from the construction that $(\cutOU{\Xs}(t), t\in[0,T])$ is almost surely c\`adl\`ag in $\lp[q]$. Therefore, to complete the proof, it suffices to prove that
\begin{equation}\label{Ch4:eq:XltoX}
   \lim_{\levOU \to \infty} \sup_{0\leq t\leq T} \| \Xs(t) -\cutOU{\Xs}(t) \|_{\ell^q}^{q} = 0 \quad \text{in probability}.
\end{equation}

Recall that the operation of rearranging two sequences of positive numbers in the decreasing order decreases their $\ell^q$-distance, we can easily deduce the following inequality (see \cite[Lemma~4]{Bertoin:CF} for details): 
\begin{equation}\label{Ch4:eq:Xl-X}
\| \Xsd(t) -\cutOU{\Xsd}(t) \|_{\ell^q}^q \leq \| \Xsd(t)\|_{\ell^q}^q -\|\cutOU{\Xsd}(t) \|_{\ell^q}^q, \quad \text{for every } t\in [0,T].
\end{equation}
By this inequality and the fact that $\kappa\geq \cutOU{\kappa}$, we deduce that
\[
\sup_{0\leq t\leq T} \| \Xsd(t) -\cutOU{\Xsd}(t) \|_{\ell^q}^q \leq A \sup_{0\leq t\leq T} \Big| M(t) -\cutOU{M}(t)  \Big|+ B(\levOU) \sup_{0\leq t\leq T}M(t),
\]
where $M(t):= \exp\big(-\int_0^t \kappa(q \e^{-
	 \dOU r}) \dd r\big)\| \Xsd(t)\|_{\ell^q}^q$, $\cutOU{M}(t):= \exp\big(-\int_0^t \cutOU{\kappa}(q \e^{-\dOU r}) \dd r\big)\| \cutOU{\Xsd}(t)\|_{\ell^q}^q$, $A := \sup_{0\leq t\leq T} \exp\Big(\int_0^t \kappa(q \e^{-\dOU r}) \dd r\Big)$ is a finite constant, and 
	\[B(\levOU):=  \sup_{0\leq t\leq T}\bigg( \exp\Big(\int_0^t \kappa(q \e^{-\dOU r}) \dd r\Big) - \exp\Big(\int_0^t \cutOU{\kappa}(q \e^{-\dOU r}) \dd r\Big)\bigg) \underset{\levOU\to \infty}{\longrightarrow}  0.
	\]
	We know by monotone convergence that $\lim_{\levOU\to \infty}\uparrow\| \cutOU{\Xsd}(T)\|_{\ell^q}^q=\| \Xsd(T)\|_{\ell^q}^q$. Since $q\geq \alpha (\e^{\dOU T}\vee 1)$, it follows from Theorem~\ref{Ch4:thm:mean} that $\Exp{\|\Xsd(T)\|_{\ell^q}^q}<\infty$. Then by dominated convergence we have
	  \[\lim_{\levOU\to \infty} \Exp{\left|M(T)-\cutOU{M}(T)\right|}=0.\]
	   	Since it follows from Proposition~\ref{Ch4:prop:mart} that $(M(t), t\in [0,T])$ and $(\cutOU{M}(t), t\in [0,T])$ are both martingales, using Doob's inequality leads to \eqref{Ch4:eq:XltoX}. 
           We have completed the proof. 
 \end{proof}

\begin{rem}\label{Ch4:rem:SMP}
As a consequence of the Feller-type property and the c\`adl\`ag path, we deduce that $\Xsd$ fulfills the strong Markov property by a standard argument (approximate a general stopping time by a decreasing sequence of simple stopping times, and the Markov property holds for simple stopping times).
\end{rem}

\subsection{Many-to-one formula and growth-fragmentation equations}\label{Ch4:sec:mto}

Let $x>0$ and $\Xsd := (\Xsd(t) = (X_1(t), X_2(t),\ldots ),~ t\geq 0)$ be an OU type growth-fragmentation process on $\co$ with characteristics $(\sigma, c, \nu, \theta)$, starting from $\Xsd(0)= (x,0,0,\ldots)$. 
For every $t\geq 0$, define a measure $\rho_x (t)(\dd y)$ on $\R_+= (0,\infty)$, such that for every $f \in \Cinfty$ (the space of $C^{\infty}$-functions on $\R_+$ with compact support), the identity holds: 
\begin{equation}\label{Ch4:eq:rho}
\prm{\rho_{x}(t)}{ f}:=\int_{\R_+} f(y)\rho_{x}(t) (\dd y) = \Exp[x]{\sum_{i=1}^{\infty} f(X_i(t))  }.
\end{equation}
Informally speaking, $\rho_x$ is the ``mean value'' of $\Xsd$. 

In this subsection we study the evolution of $\rho_x(t)$ as time proceeds. We first aim at expressing $\rho_{x}$ in term of the transition kernel of a certain Markov process. 
This idea is often referred to as a \emph{many-to-one formula} in the literature, and it has been widely used in the study of branching type processes; see e.g. \cite{Shi:BRW-Book} for branching random walks  and \cite[Theorem~3.5]{BCK:Martingale} for self-similar growth-fragmentations. 
We shall treat the inward ($\theta>0$) and outward ($\theta<0$) cases separately. For the inward case, we need a certain time-inhomogeneous \emph{affine Markov process} $\chi$. We refer to \cite{DFS:ap, Fil:tiap} for a general study of the latter. 
To describe the process $\chi$, let us record the following observation, which extends \cite[Lemma~3.1]{BertoinWatson}.
\begin{lem}\label{Ch4:lem:Htransform}
For every $\alpha \in \dom$, there exists a spectrally negative L\'evy process $\xi_{\alpha}$ with Laplace exponent 
\[\Phi_{\alpha}(q) := \kappa(q+ \alpha ) -\kappa(\alpha) ,\qquad q\geq 0.\]
Specifically, the L\'evy process $\xi_{\alpha}$ has characteristics
$(\sigma, c_{\alpha}, \Lambda_{\alpha}, 0)$, where 
\[c_{\alpha} := c+ \sigma^2 \alpha + \int_{\Sd} \Big( (1-s_1)- \sum_{i=1}^{\infty} s_i^{\alpha}(1-s_i)  \Big) \nu(\dd \sd) ,\]
 and the L\'evy measure $\Lambda_{\alpha}$ on $(-\infty, 0)$ is defined such that for every bounded measurable function $g$ on $(-\infty, 0)$ there is the identity
\[\int_{(-\infty, 0)} g(z) \Lambda_{\alpha}(\dd z) = \int_{\Sd} \sum_{i=1}^{\infty} \ind{s_i>0} s_i^{\alpha} g(\log s_i) \nu(\dd \sd).\]
\end{lem}
We omit the proof, which is straightforward.  

 \begin{lem}\label{lem:affine}
 Suppose that $\theta>0$. Then for every $\alpha\in \dom$, there exists a unique time-inhomogeneous affine process $\chi:=\chi^{(\alpha)}$ (that depends on $\alpha$) with state space $\R$, whose transition kernel $(P^{\chi}_{t,T}(z,\dd w))_{0\le t\le T}$ is determined by the Laplace transform 
\begin{equation*}
 \int_{\R} \e^{q w} P^{\chi}_{t,T}(z,\dd w)   = \exp \Big( \psi(t,T,q) z + \phi(t,T,q)  \Big), \qquad q\ge 0,
\end{equation*}
where 
$\psi(t,T,q):= \e^{-\theta(T-t) }  q$ and $\phi(t,T,q):=\int_t^{T} \Big( \kappa(q \e^{-\theta(T-r)} +\alpha \e^{\theta r}) - \kappa(\alpha \e^{\theta r}) \Big) \dd r.$ 
The associated time-homogeneous process $(t,\chi(t))_{t\ge 0}$ is a Feller process with infinitesimal generator $\mathcal{A}$. Furthermore, every $C^{1,2}$-function $g$ on $\R_+\times \R$ with compact support belongs to the domain of $\mathcal{A}$, and we have
\begin{align*}
\mathcal{A} g(t,z):=&\partial_t g(t,z) +\frac{1}{2} \sigma^2 \partial^2_{zz} g(t,z) + (c_{\alpha \e^{\theta t}} -\theta z) \partial_z g(t,z) \\
& \quad +\int_{(-\infty, 0)} \Big( g(t, z+w) - g(t, z) + (1-\e^w) \partial_z g(t, z) \Big)  \Lambda_{\alpha \e^{\theta t}}(\dd w),
\end{align*}
where $c_{\alpha \e^{\theta t}}$ and $\Lambda_{\alpha \e^{\theta t}}$ are as in Lemma~\ref{Ch4:lem:Htransform}. 
  \end{lem}
  \begin{proof}
 With notation of \cite[Definition~2.5]{Fil:tiap}, let $a_{\chi}(t)= \frac{1}{2} \sigma^2$, $\alpha_{\chi}(t)=0$, $b_{\chi}(t)=c_{\alpha \e^{\theta t}}$, $\beta_{\chi}(t)= -\theta$, $c_{\chi}(t)=0$, $\gamma_{\chi}(t)=0$,  $\mu_{\chi}(t,\dd w)$ be a null measure, and $m_{\chi}(t,\dd w)= \Lambda_{\alpha \e^{\theta t}} (\dd w)$.\footnote{For consistency, here we still use $w\mapsto 1-\e^{w}$ as the truncation function; though it is not the same as the one used in \cite{Fil:tiap}, all the results therein still hold, up to a modification of the drift coefficient $b_{\chi}$.}
We can easily check that these parameters are \emph{strongly admissible} in the sense of \cite[Definition~2.5]{Fil:tiap}, and obtain the following functions define by \cite[Equations~(2.16)--(2.18)]{Fil:tiap}:
 \[
R_{\chi}(t,q)= -\theta q, \quad \text{and} \quad F_{\chi} (t,q) = \kappa(q +\alpha \e^{\theta t}) - \kappa(\alpha \e^{\theta t}) , \qquad q\ge 0,
 \]
 where we have used Lemma~\ref{Ch4:lem:Htransform} to get $F_{\chi}$. 
 Then it follows from \cite[Theorem~2.13]{Fil:tiap} that there exists a unique \emph{strongly regular affine Markov process} $\chi$ associated with these parameters, and that $\chi$ has the  transition kernel $P^{\chi}$ and the infinitesimal generator $\mathcal{A}$ as in the statement. We complete the proof. 
\end{proof}

\begin{prop}[Many-to-one formula for the inward case]\label{Ch4:prop:mtoi}
Suppose that $\theta>0$. 
Let $\alpha\in \dom$, $\chi=\chi^{(\alpha)}$ be the  time-inhomogeneous affine process given as in Lemma~\ref{lem:affine}, and $(P^{\e^{\chi}}_{t,T})_{0\le t\le T}$ be the transition kernel of the process $\e^{\chi}:=(\e^{\chi(t)},t\ge 0)$. 
Then for every $x >0$ and $t\ge 0$, there is the identity 
\begin{equation*}
   \rho_{x}(t)(\dd y)  \eqdis x^{\alpha} y^{- \alpha\e^{\theta t}} \e^{\int_0^t \kappa(\alpha \e^{\theta r}) \dd r } P^{\e^{\chi}}_{0,t}(x,\dd y) ,\qquad y>0.
 \end{equation*}
\end{prop}
\begin{proof}
Define a measure by $\tilde{\rho}_{x}(t)(\dd y)  := x^{-\alpha} y^{ \alpha\e^{\theta t}} \e^{-\int_0^t \kappa(\alpha \e^{\theta r}) \dd r }\rho_{x}(t)(\dd y)$, then we deduce from Theorem~\ref{Ch4:thm:mean} that 
\[
\int_{\R_+} y^q \tilde{\rho}_{x}(t)(\dd y) = x^{q \e^{-\theta t} } \exp \Big( \int_0^{t} \big( \kappa(q \e^{-\theta(t-r)} +\alpha \e^{\theta r}) - \kappa(\alpha \e^{\theta r}) \big) \dd r\Big), \qquad q\ge 0.
\]
By Lemma~\ref{lem:affine}, we know that $\tilde{\rho}_{x}(t)(\dd y)$ and $P^{\e^{\chi}}_{0,t}(x,\dd y)$ are the same probability measure. Then  the claim follows. 
\end{proof}

For the outward case with $\theta<0$, since we cannot guarantee $\kappa(\alpha \e^{\theta r})$ to be finite for all $r \ge 0$, the process $\chi$ as in Lemma~\ref{lem:affine} is not well-defined in general. 
However, for any finite time period we can obtain a similar many-to-one formula. 

\begin{prop}[Many-to-one formula for the outward case]\label{Ch4:prop:mtoo}
Let $T_0>0$ and $\alpha\ge 0$. Suppose that $\theta<0$ and $\alpha \e^{\theta T_0} \in \dom$. Then there exists a unique time-inhomogeneous affine Markov process $\chi:= \chi^{(\alpha)}$ on $\R_+$ with infinitesimal generator $\mathcal{A}$: for every $C^{1,2}$-function $g$ on $\R_+\times \R$ with compact support,
\begin{align*}
\mathcal{A} g(t,z):=&\partial_t g(t,z) +\frac{1}{2} \sigma^2 \partial^2_{zz} g(t,z) + (c_{\alpha \e^{\theta (t\wedge T_0)}} -\theta z) \partial_z g(t,z) \\
& \quad +\int_{(-\infty, 0)} \Big( g(t, z+w) - g(t, z) + (1-\e^w) \partial_z g(t, z) \Big)  \Lambda_{\alpha \e^{\theta (t\wedge T_0)}}(\dd w). 
\end{align*} 
The transition kernel of its exponential $\e^{\chi}$ is given by
\begin{equation*}
 \int_{\R} y^q P^{\e^{\chi}}_{t,T}(x,\dd y)   =  x^{q\e^{-\theta (T-t)}} \exp\Big( \int_t^{T} \big( \kappa(q \e^{-\theta(T- r)} +\alpha \e^{\theta (r\wedge T_0)}) - \kappa(\alpha \e^{\theta (r\wedge T_0)}) \big) \dd r \Big), \qquad q\ge 0.
\end{equation*}
Besides, for every $x>0$ and $t\in [0,T_0]$, there is the identity
\begin{equation*}
   \rho_{x}(t)(\dd y)  \eqdis x^{\alpha} y^{- \alpha\e^{\theta t}} \e^{\int_0^t \kappa(\alpha \e^{\theta r}) \dd r } P^{\e^{\chi}}_{0,t}(x,\dd y) ,\qquad y>0.
 \end{equation*}
\end{prop}

\begin{proof}
Recall that $\dom$ includes a right-unbounded interval, then $\alpha \e^{\theta T_0} \in \dom$ infers that $[\alpha \e^{\theta T_0}, \alpha]\subset \dom$. Therefore, with notation of \cite[Definition~2.5]{Fil:tiap}, we can define $a_{\chi}(t)= \frac{1}{2} \sigma^2$, $\alpha_{\chi}(t)=0$, $b_{\chi}(t)=c_{\alpha \e^{\theta (t\wedge T_0)}}$, $\beta_{\chi}(t)= -\theta$, $c_{\chi}(t)=0$, $\gamma_{\chi}(t)=0$,  $\mu_{\chi}(t,\dd w)$ to be a null measure, and $m_{\chi}(t,\dd w)= \Lambda_{\alpha \e^{\theta (t\wedge T_0)}} (\dd w)$. In other words, these are defined in the same way as in the inward case for $t\in[0,T_0]$, and moreover extended to $[0,\infty)$ by a simple continuous extension. 
Due to the continuity, it follows again from \cite[Theorem~2.13]{Fil:tiap} that, there exists a unique strongly regular affine Markov process $\chi$ associated with these parameters, such that $\chi$ has the desired generator and $\e^{\chi}$ has the desired transition kernel. Then we easily derive the many-to-one formula by the same arguments as in the inward case. 
\end{proof}

\begin{rem}\label{rem:mto0}
If $0 \in \dom$, then the affine process $\chi^{(0)}$ in the many-to-one formula is just an OU type process with characteristics $(\Phi_0, \theta)$, where $\Phi_0(q):= \kappa(q)-\kappa(0)$. 
\end{rem}
Using the many-to-one formula, we next describe $\rho_x$  by a \emph{growth-fragmentation equation}. See \cite{BertoinWatson} and  \cite[Corollary~3.12]{BCK:Martingale} for analogous results for self-similar growth-fragmentations.  

\begin{prop}[Growth-fragmentation equation]\label{Ch4:prop:gf-eq}
For every $x>0$, the family of Radon measures $(\rho_{x}(t), t\geq 0)$, given by \eqref{Ch4:eq:rho}, is the unique solution to the growth-fragmentation equation 
\begin{equation}\label{Ch4:eq:gf}
 \prm{ \rho_{x}(t)}{ f} =f(x)+ \int_0^t \prm{\rho_{x}(r)}{\Lc f} \dd r, \qquad \forall f\in \Cinfty,
\end{equation} 
where 
\begin{equation}\label{Ch4:eq:Lc}
\Lc f(y) := \frac{1}{2}\sigma^2 y^{2} f''(y) +\Big( c+\frac{1}{2}\sigma^2- \dOU \log y\Big)y f'(y) + \int_{\Sd} \Big(\sum_{i=1}^{\infty} f(y  s_i) - f(y) + y f'(y)(1-s_1) \Big) \nu(\dd \sd).
\end{equation}
\end{prop}

\begin{proof}
We only prove it for the outward case $\theta<0$; for the inward case the arguments are very similar. 
Throughout the proof, let $T_0\ge 0$ and $\alpha:= \alpha'\e^{-\theta T_0}$ with  $\alpha'\in \dom$. Let $\chi= \chi^{(\alpha)}$ be as in Proposition~\ref{Ch4:prop:mtoo}, with transition kernel $(P^{\chi}_{t,T})_{t\le T}$ and generator $\mathcal{A}$. 

Let us first prove the uniqueness of the solution. 
For every $z\in \R$, suppose that $\rho'_{\e^z}$ is a solution to \eqref{Ch4:eq:gf}. 
For every $0\le t\le T$, define a measure $P'_{t,T} (z,\dd w)$ on $\R$ as follows: 
if $0\le t\le T\le T_0$, then $P'_{t,T} (z,\dd w)$ is the image via $y\mapsto \log y$ of the measure 
\[
\e^{-\alpha \e^{\theta t} z} y^{\alpha \e^{\theta T}} \exp \Big(-\int_t^T \kappa(\alpha \e^{\theta r}) \dd r \Big)   \rho'_{\e^z} (T-t)(\dd y);
\] 
otherwise, we simply write $P'_{t,T}:= P^{\chi}_{t,T}$. 
Then, for every $s,t \ge 0$ and $C^{1,2}$-function $g$ on $\R_+\times \R$ with compact support, we can check  the identity (see Appendix~\ref{sec:B} for details)
\begin{equation}\label{eq:gfe-unique}
 \prm{ P'_{s,s+t}(z,\cdot)}{ g(s+t,\cdot)}  =g(s,z)+ \int_0^t \prm{P'_{s,s+r}(z,\cdot)}{\mathcal{A} g(s+r,\cdot)} \dd r. 
\end{equation}
By the uniqueness of the affine process $\chi$ \cite[Theorem~2.13]{Fil:tiap}, we identify $P'_{t,T}(z,\cdot)= P^{\chi}_{t,T}(z,\cdot)$. This infers that, at any time $t\in [0,T_0]$, all the solutions to \eqref{Ch4:eq:gf} have the same value. 
The arbitrariness of $T_0$ leads to the uniqueness for all time. 

We now check that $\rho_{x}$ is a solution to \eqref{Ch4:eq:gf}. 
For every function $f\in \Cinfty$, similar to \eqref{eq:gfe-unique} we find the identity
\[
\mathcal{A} g(t,w) = \e^{- \alpha\e^{\theta t} w} \e^{\int_0^t \kappa(\alpha \e^{\theta r}) \dd r }  \mathcal{L} f(y)|_{y= \e^w}, \qquad t\in [0,T_0], w\in \R, 
\]
where $g(t,w):=f(\e^w) \e^{- \alpha\e^{\theta t} w} \e^{\int_0^t \kappa(\alpha \e^{\theta r}) \dd r }$.  
Appealing to Proposition~\ref{Ch4:prop:mtoo} ends the proof. 
\end{proof}


\subsection{Convergence of OU type growth-fragmentations}\label{Ch4:sec:CV}
For every $n\in \bar{\N}:= \N \cup \{\infty\}$, let $\Xs_n$ be an OU type growth-fragmentation with characteristics $(\sigma_n, c_n, \nu_n, \dOU_n)$ starting from $\Xs_n(0)=(1,0,\ldots)$ and $\kappa_n$ be its cumulant. We establish the following convergence result. 

\begin{thm}\label{thm:CV}
 Suppose that   
 \begin{equation}\label{eq:CV-0}
 \nu_n\big( (0,0,\ldots) \big) = 0 ~\text{for all } n\in \bar{\N}, 
 \end{equation}
 that
  \begin{equation}\label{eq:CV-dOU}
  \lim_{n\to \infty} \dOU_n = \dOU_{\infty}, 
  \end{equation}
that
 \begin{equation}\label{eq:CV-c}
   \lim_{n\to \infty} (c_n+ \sigma_n^2/2) = c_{\infty}+ \sigma_{\infty}^2/2, 
 \end{equation}
and that there is the weak convergence of finite measures on $\Sd$
\begin{equation}\label{eq:CV-nu}
  \sigma_n^2 \delta_{\mathbf{1}} (\dd \sd) + (1-s_1)^2 \nu_n(\dd \sd) \underset{n\to \infty}{\Longrightarrow} \sigma_{\infty}^2 \delta_{\mathbf{1}} (\dd \sd) + (1-s_1)^2 \nu_{\infty}(\dd \sd). 
\end{equation}
Write $\bar{\dOU}:=\sup_{n\in \bar{\N}} \dOU_n<\infty$, then for every $T\geq 0$ and $q>2 (\e^{\bar\dOU T}\vee 1)$, there is the weak convergence
\begin{equation*}
  \Big(\Xsd_{n}(t), ~ t\in [0,T]\Big)  \underset{n\to \infty}{\Longrightarrow} \Big(\Xsd_{\infty}(t),~ t\in [0,T]\Big),
\end{equation*}
in the space $D([0,T], \lp[q])$ of c\`adl\`ag functions with values in $\lp[q]$ endowed with the Skorokhod topology. As a consequence, the weak convergence 
\begin{equation*}
\Xsd_{n}  \underset{n\to \infty}{\Longrightarrow} \Xsd_{\infty},
\end{equation*}
holds in the space $D(\R_+, \co)$ of c\`adl\`ag functions with values in $\co$ endowed with the Skorokhod topology. 
\end{thm}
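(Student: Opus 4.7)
The plan is to adapt the three-sequence scheme from the proof of Theorem~2 in \cite{Bertoin:CF} to the OU type setting, approximating $\Xsd_n$ by its truncations $\cutOUd{\Xsd_n}$ (Definition~\ref{Ch4:defn:BOUP}). The three ingredients are: \emph{(a)} pointwise convergence of the cumulants $\kappa_n \to \kappa_\infty$ and $\cutOU{\kappa}_n \to \cutOU{\kappa}_\infty$ for every $\levOU \geq 0$; \emph{(b)} for each fixed $\levOU \geq 0$, the weak convergence of the truncated processes $\cutOUd{\Xsd_n} \Rightarrow \cutOUd{\Xsd_\infty}$ in $D([0,T], \lp[q])$; and \emph{(c)} the uniform approximation
\[
\lim_{\levOU \to \infty}\sup_{n \in \bar{\N}} \EE\Big[\sup_{0 \leq t \leq T} \|\Xsd_n(t) - \cutOUd{\Xsd_n}(t)\|_{\ell^q}^q\Big] = 0.
\]
A standard triangle-inequality argument combining (a)--(c) then yields $\Xsd_n \Rightarrow \Xsd_\infty$ in $D([0,T], \lp[q])$, and the conclusion in $D(\R_+, \co)$ is immediate since $\lp[q]$ embeds continuously in $\co$ and $T$ is arbitrary.

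For (a), I would rewrite the cumulant in the form
\[
\kappa_n(q) = \Big(c_n + \tfrac{1}{2}\sigma_n^2 \Big) q + \int_{\Sd} F_q(\sd) \Big[\sigma_n^2 \delta_{\mathbf{1}}(d\sd) + (1-s_1)^2 \nu_n(d\sd)\Big],
\]
where $F_q(\sd) := (1-s_1)^{-2}\bigl(\sum_{i\geq 1} s_i^q - 1 + q(1-s_1)\bigr)$. The Taylor expansion $s_1^q - 1 + q(1-s_1) = \tfrac{1}{2}q(q-1)(1-s_1)^2 + O((1-s_1)^3)$ together with the bound $\sum_{i\geq 2} s_i^q \leq (1-s_1)^q \leq (1-s_1)^2$ (valid for $q\geq 2$) shows that $F_q$ extends to a bounded continuous function on the compact space $\Sd$ with $F_q(\mathbf{1}) = q(q-1)/2$; hypotheses \eqref{eq:CV-c} and \eqref{eq:CV-nu} therefore deliver $\kappa_n(q) \to \kappa_\infty(q)$ for every $q \geq 2$, and the analogous computation with the truncated measures gives $\cutOU{\kappa}_n \to \cutOU{\kappa}_\infty$. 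For (b), each $\cutOUd{\Xsd_n}$ is built in Definitions~\ref{Ch4:defn:BOUPF}--\ref{Ch4:defn:BOUP} from three independent families of i.i.d. Poissonian ingredients (exponential clocks, OU type processes driving the motion of atoms, dislocation sequences) whose laws depend continuously on the parameters $(\sigma_n, c_n, \cutOU{\nu}_n, \dOU_n)$; continuity for the OU type component follows from \eqref{Ch4:eq:Lap} combined with (a) and \eqref{eq:CV-dOU}. A Skorokhod coupling realises these ingredients jointly with almost sure convergence and, since only finitely many branching events occur in $[0,T]$, transfers to the weak convergence of $\cutOUd{\Xsd_n}$ in $D([0,T], \lp[q])$.

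The main obstacle is (c). Following the scheme of the proof of Proposition~\ref{Ch4:prop:cadlag}, the inequality \eqref{Ch4:eq:Xl-X} combined with Doob's inequality applied to the non-negative additive martingales of Proposition~\ref{Ch4:prop:mart} reduces (c) to showing
\[
\lim_{\levOU \to \infty} \sup_{n \in \bar{\N}} \sup_{0 \leq t \leq T} \Big|\expp{\int_0^t \kappa_n(q\e^{-\dOU_n r})\, dr} - \expp{\int_0^t \cutOU{\kappa}_n(q\e^{-\dOU_n r})\, dr}\Big| = 0.
\]
Since $(\dOU_n)$ converges and is hence bounded, the exponent $p := q\e^{-\dOU_n r}$ ranges, uniformly in $n$ and $r\in [0,T]$, over a fixed compact interval $[p_0, p_1]$ with $p_0 = q/(\e^{\bar{\dOU}T}\vee 1) > 2$, the last strict inequality being guaranteed by the hypothesis $q > 2(\e^{\bar{\dOU}T}\vee 1)$. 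Writing $s_i^p = s_i^{p-2}\cdot s_i^2$ and using $s_i \leq \e^{-\levOU}$ on the truncated region, one obtains the key estimate
\[
0 \leq \kappa_n(p) - \cutOU{\kappa}_n(p) = \int_{\Sd} \sum_{i\geq 2} s_i^p \ind{s_i\leq \e^{-\levOU}}\, \nu_n(d\sd) \leq \e^{-(p_0-2)\levOU} \int_{\Sd}(1-s_1)^2\, \nu_n(d\sd) \leq M\, \e^{-(p_0-2)\levOU},
\]
where $M := \sup_{n\in \bar{\N}} \int (1-s_1)^2 \nu_n(d\sd) < \infty$ by \eqref{eq:CV-nu}. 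Since $p_0 - 2 > 0$, the right-hand side vanishes uniformly in $n$ and $p \in [p_0, p_1]$, and the elementary exponential bound $|\e^a - \e^b| \leq \e^{a\vee b} |a-b|$, together with the uniform boundedness of $\int_0^t \kappa_n(p)\, dr$ on $[0,T]\times \bar{\N}$ provided by (a) and continuity of $\kappa_\infty$ on $[p_0, p_1]$, yields the required vanishing. Assembling (a)--(c) via a standard diagonal argument completes the proof.
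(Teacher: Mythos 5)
Your three–sequence scheme (a)--(c) is a coherent alternative to the paper's two–pronged argument, which separately establishes convergence of finite–dimensional distributions and Aldous tightness (via stopping times). Your step (a) matches the paper's observation around \eqref{eq:CV-kappa}, and the key estimate in step (c),
\[
0 \leq \kappa_n(p) - \cutOU{\kappa}_n(p) \leq \e^{-(p_0-2)\levOU}\int_{\Sd}(1-s_1)^2\,\nu_n(d\sd),
\]
is exactly the one the paper uses in Lemma~\ref{lem:CV-err}, so the core analytic input is the same.

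However, step (b) contains a genuine gap. You assert that a Skorokhod coupling of the Poissonian ingredients ``transfers to the weak convergence of $\cutOUd{\Xsd_n}$ in $D([0,T],\lp[q])$,'' but what the coupling in the spirit of Lemma~\ref{lem:CV-cut} delivers is only convergence in probability in $\lp[q]$ \emph{for each fixed time} $t$ — not convergence of the full path in the Skorokhod topology. Upgrading to path–level Skorokhod convergence is precisely the delicate part: one must simultaneously match the (random) branching times, glue together the Skorokhod–converging OU trajectories of finitely many atoms with a single consistent time change, and verify that the decreasing–rearrangement map commutes with this time change. None of this is automatic, and the paper avoids it entirely: it only needs pointwise convergence of the truncated processes to prove finite–dimensional convergence (Lemma~\ref{lem:CV-finite}), and handles process–level tightness separately via Aldous' criterion using the strong Markov property, optional stopping, and moment estimates from Proposition~\ref{Ch4:prop:mart}. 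Without Aldous (or something equivalent), your scheme still needs a rigorous Skorokhod–level version of Lemma~\ref{lem:CV-cut}, and that is substantial missing work.

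A secondary issue: your (c) is stated in $L^1$, i.e.\ $\lim_{\levOU\to\infty}\sup_n\EE[\sup_{t\leq T}\|\Xsd_n(t)-\cutOUd{\Xsd_n}(t)\|_{\ell^q}^q]=0$. The Doob–inequality argument you cite from the proof of Proposition~\ref{Ch4:prop:cadlag} only yields $\sup_{t\leq T}\|\Xsd(t)-\cutOU{\Xsd}(t)\|_{\ell^q}^q\to0$ \emph{in probability}, not in $L^1$: the martingale $M-\cutOU{M}$ is only known to be bounded in $L^1$, and Doob's maximal inequality in $L^1$ gives control in probability only. This is fixable — the triangle–inequality (Billingsley) scheme only requires the uniform–in–$n$ bound in probability — but as written the $L^1$ claim is stronger than what the cited argument provides.
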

This result generalizes Theorem~2 in \cite{Bertoin:CF}, which deals with the case $\dOU_n \equiv 0$ for every $n\in \bar{\N}$; the assumptions \eqref{eq:CV-c} and \eqref{eq:CV-nu} are inherited from there.\footnote{There is a typo in Theorem~2 in \cite{Bertoin:CF} for the condition \eqref{eq:CV-c}.}
The condition \eqref{eq:CV-0} is a minor technical assumption that makes our arguments less cumbersome. 

\begin{rem}
	The reason for which we consider the space $\lp[q]$ with $q>2 (\e^{\bar\dOU T}\vee 1)$ is as follows. 
	Recall that $[2,\infty)\subset \mathrm{dom}(\kappa_n)$ for all $n\in \bar{\N}$, then $\Xsd_n(t)\in \lp[2 (\e^{\bar\dOU T}\vee 1)]$ (by Theorem~\ref{Ch4:thm:mean}) for every $t\in [0,T]$. 
	We further need to enlarge the state space to $\lp[q]$ with $q>2 (\e^{\bar\dOU T}\vee 1)$, so as to ensure that $(\Xsd_n(t))_{n\in \N}$ is tight in $\lp[q]$, which does not necessarily hold with $q=2 (\e^{\bar\dOU T}\vee 1)$. See the proof of Lemma~\ref{lem:CV-finite} below for details. 
\end{rem}

Before tackling the proof of Theorem~\ref{thm:CV}, we point out several evidences that suggest its validity. First, \eqref{eq:CV-c} and \eqref{eq:CV-nu} yield the convergence of the cumulant  
\begin{equation}\label{eq:CV-kappa}
 \lim_{n\to \infty}   \kappa_n(p) = \kappa_{\infty}(p),\quad \text{for all } p>2.  
\end{equation}
However, this convergence does not necessarily hold for $p=2$. 
Second, we have the convergence of the selected fragments defined as in Lemma~\ref{Ch4:lem:selected}.
Indeed, one easily deduces from \eqref{eq:CV-c} and \eqref{eq:CV-nu} the convergence of the Laplace exponents \eqref{Ch4:eq:Phi-selected}:
\begin{equation}\label{eq:CV-Phi}
\lim_{n\to \infty}   \Phi_{n,*}(p) = \Phi_{\infty,*}(p),\quad \text{for all } p\geq 0.   
\end{equation}
Then the convergence of the selected fragments is a consequence of the following lemma. 

\begin{lem}\label{lem:CV-Z}
For every $n\in \bar{\N}$, let $Z_n$ be an OU type process with characteristics $(\Phi_{n,*}, \dOU_n)$ starting from $Z_n(0)=0$. Suppose that \eqref{eq:CV-dOU} and \eqref{eq:CV-Phi} hold. Then there exists a coupling of $(Z_n, n\in \bar{\N})$, such that for every $t\geq 0$
\[ \lim_{n\to \infty} \sup _{s\in [0,t]}|Z_n(s) - Z_{\infty}(s)| =0, ~\text{in probability}.\]
\begin{proof}
Recall from \eqref{Ch4:eq:solOU} that $Z_n$ is a stochastic integral: 
\[Z_n(t)= \int_0^t \e^{-\dOU_n (t-s)} \dd\xi_n(s), \qquad t\geq 0,\]
where $\xi_n$ is a L\'evy process with Laplace exponent $\Phi_{n,*}$. 
We first observe that there exists a coupling of L\'evy processes $(\xi_n)_{n\in\bar{\N}}$, such that for every $t\geq 0$
\[ \lim_{n\to \infty} \sup _{s\in [0,t]}|\xi_n(s) - \xi_{\infty}(s)| =0, \quad \text{ in probability};\]
see e.g. \cite[Theorem~15.14 and 15.17]{Kallenberg}. 
Therefore, an application of \cite[Theorem~5]{Jakubowski} leads to the claim, if  $(\xi_n)_{n\in \N}$ satisfy the so-called \emph{condition UT}. To check the \emph{condition UT}, we shall use \cite[Lemme~3.1]{JMP:cv}. Consider $\xi_n^1(t) := \xi_n(t) - \sum_{|\Delta \xi_n (s)| >1}\Delta \xi_n (s)$. Then $b_n^1:= \Exp{\xi_n^1(1)}$ is finite, and $M_n^1(t):= \xi_n^1(t) - b_n^1 t$ is a martingale. In other words, the canonical decomposition of the special semimartingale $\xi_n^1$ is given by  
\[\xi_n^1(t) =b_n^1 t   + M_n^1(t).  \]
The family of the variations of the processes $(b_n^1 t)_{t\geq 0}$ is clearly tight, then it follows from \cite[Lemme~3.1]{JMP:cv} that $(\xi_n)$ satisfy the \emph{condition UT}. 
\end{proof}
\end{lem}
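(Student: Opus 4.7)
The plan is to exploit the stochastic integral representation \eqref{Ch4:eq:solOU}:
\[ Z_n(t) = \int_0^t e^{-\dOU_n (t-s)}\, d\xi_n(s), \qquad n\in\bar{\N}, \]
where $\xi_n$ is a spectrally negative L\'evy process with Laplace exponent $\Phi_{n,*}$. The integrands $s\mapsto e^{-\dOU_n(t-s)}$ are deterministic and, by \eqref{eq:CV-dOU}, converge uniformly on $[0,t]$ to $e^{-\dOU_\infty(t-s)}$, so the whole problem reduces to a joint convergence of integrators and integrands, to which a stochastic-integral convergence theorem can be applied.

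The first step is to build a good coupling of the L\'evy processes. The pointwise convergence of Laplace exponents \eqref{eq:CV-Phi} on $[0,\infty)$ is equivalent, via the L\'evy--Khintchine formula \eqref{Ch4:eq:Phi} and analytic continuation, to convergence of the associated characteristic exponents. By standard L\'evy process theory (e.g.\ Theorems~15.14 and 15.17 in \cite{Kallenberg}) this yields weak convergence $\xi_n \Rightarrow \xi_\infty$ in the Skorokhod space $D(\R_+,\R)$. Since $\xi_\infty$ has no fixed discontinuities, Skorokhod's representation theorem can be strengthened to produce a coupling such that $\sup_{s\in[0,t]} |\xi_n(s) - \xi_\infty(s)| \to 0$ in probability for every $t\geq 0$.

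The second step is to transfer this uniform convergence through the stochastic integrals. Here the natural tool is Jakubowski's theorem (Theorem~5 in \cite{Jakubowski}) on convergence of stochastic integrals of deterministic integrands against convergent semimartingale integrators; combined with the first step and the uniform convergence of $s\mapsto e^{-\dOU_n(t-s)}$, this yields $\sup_{s\in[0,t]}|Z_n(s)-Z_\infty(s)| \to 0$ in probability. The only nontrivial hypothesis is that $(\xi_n)_{n\in\N}$ satisfy the condition UT (uniform tightness).

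Verifying UT is the main technical obstacle, and I would use the criterion of Lemme~3.1 in \cite{JMP:cv}. Write $\xi_n = \xi_n^{(1)} + J_n$, where $J_n$ collects the jumps of modulus larger than $1$. Then $\xi_n^{(1)}$ has bounded jumps, hence finite moments of all orders, so it is a special semimartingale with canonical decomposition $\xi_n^{(1)}(t) = b_n t + M_n(t)$, where $b_n := \EE[\xi_n^{(1)}(1)]$ is finite and $M_n$ is a square-integrable martingale. The JMP criterion reduces UT to the tightness of the total variation of the finite-variation parts $(b_n t)_{t\in[0,T]}$, which in turn amounts to boundedness of $(b_n)$ uniformly in $n$. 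The delicate point is to extract this boundedness from only pointwise convergence of $\Phi_{n,*}$: one differentiates the Laplace exponents at the origin and uses convexity on $[0,\infty)$ together with \eqref{Ch4:eq:Phi} to read off $b_n$ from the derivative of $\Phi_{n,*}$ at $0^+$. Once this is done, the JMP criterion applies and the proof is complete.
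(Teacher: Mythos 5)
Your proposal follows essentially the same route as the paper's proof: the stochastic-integral representation, the Kallenberg coupling of the $\xi_n$, Jakubowski's theorem for the integrals, and verification of condition UT via the decomposition into a bounded-jump special semimartingale and the criterion of Lemme~3.1 in \cite{JMP:cv}. The only difference is that you explicitly flag that tightness of the variations of $(b_n t)$ requires uniform boundedness of the drifts $b_n$ and sketch how to extract it from \eqref{eq:CV-Phi}, a point the paper dismisses as ``clearly tight''; your added care is warranted and does not change the argument.
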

The rest of this subsection is devoted to the proof of Theorem~\ref{thm:CV}. By Prokhorov's theorem (see e.g. \cite[Section~5]{Billingsley:Convergence}), we shall prove the weak convergence of finite dimensional distributions and the tightness. 
In the remaining of this subsection, we fix $\bar{\dOU}:=\sup_{n\in \bar{\N}} \dOU_n<\infty$, $T\geq 0$ and $q>2 (\e^{\bar\dOU T}\vee 1)$.  

\paragraph{Convergence of finite dimensional distributions}
The proof of the weak convergence of finite dimensional distributions proceeds as Lemma~7 in \cite{Bertoin:CF}, though we overcome non--trivial difficulties which require new estimates. 
Consider for every $n\in \bar{\N}$ and $\levOU \geq 0$ the truncated OU type growth-fragmentation $\cutOU{\Xs}_n$. Recall that $\cutOU{\Xs}_n$ corresponds to an OU type branching Markov process $\cutOU{\Zs}_n$ with characteristics $(\sigma_n,c_n, \cutOU{\mu}_n, \dOU_n)$, where $\cutOU{\mu}_n$ is the image of $\nu$ by the map $(s_1, s_2, \ldots)\mapsto \cutOU{(\log s_1, \log s_2, \ldots)}$ as in \eqref{Ch4:eq:cut}. 

\begin{lem}[{\cite[Lemma~6]{Bertoin:CF}}]\label{lem:CV-cutnu}
Suppose that \eqref{eq:CV-0} and \eqref{eq:CV-nu} hold. Then for every $\levOU\geq 0$, there is  the weak convergence of finite measures on $\Rd$
\begin{equation*}
  \sigma_n^2 \delta_{(0,-\infty,\ldots)} (\dd \rr) + (1-\e^{r_1})^2 \cutOU{\mu}_n(\dd \rr) \underset{n\to \infty}{\Longrightarrow} \sigma_{\infty}^2 \delta_{(0,-\infty,\ldots)} (\dd \rr) + (1-\e^{r_1})^2 \cutOU{\mu}_{\infty}(\dd \rr), 
\end{equation*}
and 
\[  \cutOU{\mu}_n(\cdot~|\Rdtwo)\underset{n\to \infty}{\Longrightarrow} \cutOU{\mu}_{\infty}(\cdot~|\Rdtwo).  \]
\end{lem}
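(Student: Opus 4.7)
The plan is to realize both measures on $\Rd$ as pushforwards of a single finite measure on $\Sd$ under the composition map
\[
T_{\levOU}: \Sd \to \Rd,\qquad T_{\levOU}(\sd) := \cutOU{\rr} \text{ with } \rr := (\log s_1, \log s_2, \ldots),
\]
using $\log 0 := -\infty$ and the truncation $\cutOU{\cdot}$ from \eqref{Ch4:eq:cut}. Since $\cutOU{\cdot}$ preserves the first coordinate, the weight obeys $(1-\e^{r_1})^2 = (1-s_1)^2$. Writing $\mathrm{M}_n := \sigma_n^2 \delta_{\mathbf{1}} + (1-s_1)^2 \nu_n$ on $\Sd$ and noting $T_{\levOU}(\mathbf{1}) = (0,-\infty,-\infty,\ldots)$, a direct change of variables yields
\[
(T_{\levOU})_* \mathrm{M}_n = \sigma_n^2 \delta_{(0,-\infty,\ldots)} + (1-\e^{r_1})^2 \cutOU{\mu}_n.
\]
The assumption \eqref{eq:CV-nu} reads exactly $\mathrm{M}_n \Longrightarrow \mathrm{M}_\infty$, so the first claim reduces to showing that pushforward by $T_{\levOU}$ preserves this weak convergence. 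Since $\dOU$ does not enter these statements, this is essentially the content of Lemma~6 in \cite{Bertoin:CF}.

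The discontinuities of $T_{\levOU}$ lie inside $D_{\levOU} := \bigcup_{i\geq 2}\{\sd \in \Sd : s_i = \e^{-\levOU}\}$, arising from the jump of $\cutOU{\cdot}$ at $-\levOU$. The atom $\delta_{\mathbf{1}}$ does not charge $D_{\levOU}$, so it suffices to control $(1-s_1)^2 \nu_\infty(D_{\levOU})$. The marginal law of each $s_i$ under the finite measure $(1-s_1)^2\nu_\infty$ has only countably many atoms; hence $(1-s_1)^2\nu_\infty(D_{\levOU}) = 0$ for all $\levOU$ outside an at-most-countable exceptional set $E \subset (0,\infty)$. For such $\levOU$ the continuous mapping theorem applied to \eqref{eq:CV-nu} gives the claim directly.

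The main obstacle is then the exceptional values $\levOU \in E$. The plan is a sandwich argument. For small $\epsilon > 0$, let $T_{\levOU}^{\epsilon}: \Sd \to \Rd$ be a continuous modification obtained by replacing $\cutOU{\cdot}$ with a continuous interpolation inside $(-\levOU - \epsilon, -\levOU + \epsilon)$. Continuous mapping yields $(T_{\levOU}^{\epsilon})_* \mathrm{M}_n \Longrightarrow (T_{\levOU}^{\epsilon})_* \mathrm{M}_\infty$ for each fixed $\epsilon > 0$. For a bounded continuous test $F$ on $\Rd$, the difference
\[
\left| \int F \, d(T_{\levOU})_*\mathrm{M}_n - \int F \, d(T_{\levOU}^{\epsilon})_*\mathrm{M}_n \right|
\]
is controlled by $2\|F\|_\infty$ times the $\mathrm{M}_n$-mass of the thin shell $\bigcup_{i\geq 2}\{s_i \in (\e^{-\levOU-\epsilon}, \e^{-\levOU+\epsilon})\}$. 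Choosing $\epsilon$ with $\levOU \pm \epsilon \notin E$, this shell is a continuity set of $\mathrm{M}_\infty$, so its $\mathrm{M}_n$-mass converges to the corresponding $\mathrm{M}_\infty$-mass by Portmanteau, which in turn shrinks to $0$ as $\epsilon \downarrow 0$. Interchanging the limits in $\epsilon$ and $n$ in the standard way completes the claim for every $\levOU \geq 0$.

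Finally, on $\Rdtwo$ the level-$\levOU$ truncation forces $r_2 > -\levOU$, hence $s_2 > \e^{-\levOU}$ and $s_1 \leq 1 - \e^{-\levOU}$, giving the uniform lower bound $(1-\e^{r_1})^2 \geq \e^{-2\levOU}$. Thus $\cutOU{\mu}_n|_{\Rdtwo}$ is absolutely continuous with respect to $(1-\e^{r_1})^2\cutOU{\mu}_n|_{\Rdtwo}$ with density bounded by $\e^{2\levOU}$, and the first statement transfers weak convergence to the unnormalized $\cutOU{\mu}_n|_{\Rdtwo}$. Testing against the constant $1$ gives convergence of total masses $\cutOU{\mu}_n(\Rdtwo) \to \cutOU{\mu}_\infty(\Rdtwo)$, and dividing yields the asserted weak convergence of the conditional probabilities.
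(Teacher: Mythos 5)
Your reduction to a change of variables on $\Sd$ followed by the continuous mapping theorem is the right framework, and your treatment of $\levOU$ outside the countable exceptional set $E$ is complete and correct. The sandwich argument for $\levOU \in E$, however, does not close. As $\epsilon \downarrow 0$, the shell $\bigcup_{i\geq 2}\{s_i \in (\e^{-\levOU-\epsilon},\e^{-\levOU+\epsilon})\}$ decreases to $D_\levOU$, so by continuity from above its $\mathrm{M}_\infty$-mass converges to $\mathrm{M}_\infty(D_\levOU)$, which is strictly positive precisely when $\levOU\in E$. The error you are controlling therefore does not shrink to zero, and the limit interchange fails exactly in the case it was designed to handle.

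This is not merely a defect of your argument: the asserted convergence genuinely fails at $\levOU\in E$. Take $\sigma_n\equiv 0$, $\nu_\infty=\delta_{(1/2,\,1/4,\,0,\ldots)}$, $\nu_n=\delta_{(1/2,\,1/4+1/n,\,0,\ldots)}$ and $\levOU=\log 4$. The hypotheses \eqref{eq:CV-0} and \eqref{eq:CV-nu} hold, yet $\cutOU{\mu}_n$ is a Dirac mass at $(\log\tfrac12,\log(\tfrac14+\tfrac1n),-\infty,\ldots)$, which converges to a Dirac at $(\log\tfrac12,\log\tfrac14,-\infty,\ldots)$, not to $\cutOU{\mu}_\infty=\delta_{(\log\frac12,\,-\infty,\ldots)}$. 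The statement should therefore be understood as holding for all but countably many $\levOU$, which is what your first two steps already give and is sufficient for the sequel, where only a sequence of levels $\levOU\to\infty$ is ever needed. Dropping the sandwich argument and recording the exceptional set would make the proof both correct and shorter. Finally, in the last paragraph the bound $(1-\e^{r_1})^2\geq\e^{-2\levOU}$ on $\Rdtwo$ is the right observation, but you should note explicitly that, by \eqref{eq:CV-0}, $\cutOU{\mu}_n|_{\Rdtwo}$ is supported on $\{r_2>-\levOU\}$ whose boundary $\{r_2=-\levOU\}$ is $\cutOU{\mu}_\infty$-null by construction of the truncation; this is what makes the restriction of the weakly convergent sequence to $\Rdtwo$ legitimate before you divide by the density.
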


 These relations lead to the following convergence.  
\begin{lem}\label{lem:CV-cut}
Suppose that \eqref{eq:CV-0}, \eqref{eq:CV-dOU}, \eqref{eq:CV-c} and \eqref{eq:CV-nu} hold. Then for every $\levOU \geq 0$, there exists a coupling of $(\cutOU{\Xsd}_n)_{n\in\bar{\N}}$, such that for every $t\geq 0$ and $p\geq 2$, 
\[ \lim_{n\to \infty} \|\cutOU{\Xsd}_n (t) -\cutOU{\Xsd}_{\infty}(t) \|_{\ell^p} = 0 \quad \text{in probability}. \]
\begin{proof}
	Recall that in the construction of $\cutOU{\Zs}_n$ by Definition~\ref{Ch4:dfn:BOUPF}, each particle $u\in \U$ is born at time $b_{n,u}\geq 0$ with initial position $a_{n,u}$, and then moves according to an OU type process $\cutOU{Z}_{n,u}$ with characteristics $(\cutOU{\psi}_n,\dOU_n)$, where $\cutOU{\psi}_n$ is given by \eqref{Ch4:eq:Psi}. After an exponential time $\cutOU{\lambda}_{n,u}$ with parameter $\cutOU{\nu}_n(\Sdtwo)$, it splits into at most $\lceil \e^{\levOU} \rceil$ particles (see \eqref{eq:el}) whose relative positions are $(\cutOU{\Delta a}_{n,ui}, i\in \N)$, distributed according to $\cutOU{\nu}_n(\cdot~|\Sdtwo)$. 
	 We shall prove that there exists a coupling of $(\cutOU{\Zs}_n)_{n\in\bar{\N}}$, such that
	 the following sequences indexed by $\U$
	\[
	\Big(
	\ind{\cutOU{b}_{n,u}\leq t<\cutOU{b}_{n,u}+\cutOU{\lambda}_{n,u}}\exp\big( \e^{-\dOU_n (t-\cutOU{b}_{n,u})} \cutOU{a}_{n,u} + \cutOU{Z}_{n,u}(t-\cutOU{b}_{n,u}) \big), \quad u\in \U
	\Big)
	\]
	 converges in probability as $n\to \infty$, for $\ell^p$-distance. 
	 Then the claim follows since the rearrangement of sequences in decreasing order decreases the $\ell^p$-distance.

For every $u\in \U$, we may assume by Lemma~\ref{lem:CV-cutnu} and Skorokhod representation theorem that the random variables $\cutOU{\lambda}_{n,u}$, $\cutOU{\Delta a}_{n,u}$ are coupled in such a way that  
\begin{equation}\label{eq:CV-lambda}
\lim_{n\to \infty} \cutOU{\lambda}_{n,u} = \cutOU{\lambda}_{\infty,u}, \quad \text{a.s.}
\end{equation}
and 
\[
\lim_{n\to \infty} \cutOU{\Delta a}_{n,ui} = \cutOU{\Delta a}_{\infty,ui}, \quad \text{for all } i\in \N,~\text{a.s.}
\]
We further deduce from \eqref{eq:CV-c} and Lemma~\ref{lem:CV-cutnu} that  
$\lim_{n\to \infty} \cutOU{\psi}_n(p) = \cutOU{\psi}_{\infty}(p)$ for every $p\geq 0$. 
Using Lemma~\ref{lem:CV-Z} leads to 
\[
\lim_{n\to \infty}  \cutOU{Z}_{n,u}(s) = \cutOU{Z}_{\infty,u}(s), \qquad \text{for all } s>0,~\text{a.s.}
\]
Therefore, for every $u\in \U$, we have
\[
\lim_{n\to \infty} \exp\big(-\dOU_n (t-\cutOU{b}_{n,u})\big) \cutOU{a}_{n,u} + \cutOU{Z}_{n,u}(t-\cutOU{b}_{n,u}) 
=\exp\big(-\dOU_{\infty} (t-\cutOU{b}_{\infty,u})\big) \cutOU{a}_{\infty,u} + \cutOU{Z}_{\infty,u}(t-\cutOU{b}_{\infty,u}), \quad~\text{a.s.}
\]
Denote the set of vertices alive at time $t\geq 0$ by $V_{n,t}\subset{\U}$. Observe that $V_{n,t}$ is almost surely a finite set; furthermore, it follows from \eqref{eq:CV-lambda} that $V_{n,t}$ coincides with $V_{\infty,t}$ with high probability. Summarizing, we have completed the proof. 
\end{proof}
\end{lem}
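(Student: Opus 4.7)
My plan is to exploit the fact that the truncated measure satisfies $\cutOU{\mu}_n(\Rdtwo)<\infty$ and that each branching event produces at most $\lceil \e^{\levOU}\rceil$ children, so the truncated OU type branching Markov chain $\cutOU{\Zs}_n$ has the structure of a classical branching process indexed by a random subtree of $\U$ in which every particle has a finite exponential lifetime. The strategy is to couple all the ingredients of the construction given by Definition~\ref{Ch4:defn:BOUPF} vertex by vertex so that they converge almost surely to their $n=\infty$ counterparts, and then transfer this convergence to the ranked sequence $\cutOU{\Xsd}_n$.

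First, by Lemma~\ref{lem:CV-cutnu} one has the convergence of total rates $\cutOU{\mu}_n(\Rdtwo)\to \cutOU{\mu}_\infty(\Rdtwo)$ and the weak convergence $\cutOU{\mu}_n(\cdot\mid\Rdtwo)\Rightarrow \cutOU{\mu}_\infty(\cdot\mid\Rdtwo)$, so Skorokhod's representation theorem allows me to couple, independently for each $u\in \U$, the exponential lifetimes $\cutOU{\lambda}_{n,u}$ and the dislocation vectors $(\cutOU{\Delta a}_{n,ui})_{i\in\N}$ so that they converge almost surely as $n\to \infty$. Independently, from \eqref{eq:CV-c} together with Lemma~\ref{lem:CV-cutnu} I would check that the Laplace exponents $\cutOU{\psi}_n$ defined by \eqref{Ch4:eq:Psi} converge pointwise to $\cutOU{\psi}_\infty$; combined with \eqref{eq:CV-dOU}, Lemma~\ref{lem:CV-Z} then provides a coupling of the independent motion processes $\cutOU{Z}_{n,u}$, $u\in\U$, along which $\sup_{s\in[0,t]}|\cutOU{Z}_{n,u}(s)-\cutOU{Z}_{\infty,u}(s)|\to 0$ in probability.

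With all the basic ingredients coupled, I would propagate convergence through the genealogy by induction on $|u|$. The birth time $\cutOU{b}_{n,u}$ is a finite sum of lifetimes along the ancestral branch, so $\cutOU{b}_{n,u}\to \cutOU{b}_{\infty,u}$ a.s.; the initial position $\cutOU{a}_{n,u}=\e^{-\dOU_n\cutOU{\lambda}_{n,u_-}}\cutOU{a}_{n,u_-}+\cutOU{Z}_{n,u_-}(\cutOU{\lambda}_{n,u_-})+\cutOU{\Delta a}_{n,u}$ converges by continuity in $\dOU_n$, and the same reasoning applied once more yields the convergence in probability of the position at time $t$, namely $\e^{-\dOU_n(t-\cutOU{b}_{n,u})}\cutOU{a}_{n,u}+\cutOU{Z}_{n,u}(t-\cutOU{b}_{n,u})$. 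Since $\cutOU{\mu}_\infty(\Rdtwo)<\infty$, the set $V_{\infty,t}$ of vertices alive at time $t$ is a.s.\ finite, and the limiting lifetimes have atomless distributions, so $t\neq \cutOU{b}_{\infty,u}+\cutOU{\lambda}_{\infty,u}$ and $t\neq \cutOU{b}_{\infty,u}$ a.s.; hence the alive-indicators $\ind{\cutOU{b}_{n,u}\leq t<\cutOU{b}_{n,u}+\cutOU{\lambda}_{n,u}}$ stabilize, and $V_{n,t}=V_{\infty,t}$ with probability tending to one. This gives $\ell^p$-convergence of the sequences of exponentiated positions indexed by $\U$, and the decreasing rearrangement is a contraction in $\ell^p$ (Theorem~3.5 in \cite{LiebLoss:Analysis}), so the claim follows.

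The delicate step is the one about the tree structure: one must ensure that no particle that is alive at time $t$ in the $n=\infty$ system is ever missed by the pre-limit systems, and conversely. The almost sure convergence of the lifetimes, together with the fact that $V_{\infty,t}$ is finite and that $t$ avoids the finitely many limiting birth/death times, is exactly what rules this out; everything else reduces to a genealogy-by-genealogy application of Lemma~\ref{lem:CV-Z}.
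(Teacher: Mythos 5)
Your proof follows essentially the same route as the paper's: couple the lifetimes and dislocation vectors via Lemma~\ref{lem:CV-cutnu} and Skorokhod representation, couple the driving OU type processes via Lemma~\ref{lem:CV-Z}, propagate the convergence vertex-by-vertex through the a.s.\ finite genealogy, argue that the set of alive vertices stabilizes with high probability, and conclude with the $\ell^p$-contractivity of decreasing rearrangement. If anything, you are slightly more careful than the paper on two points: you spell out why the alive-indicators stabilize (atomless lifetime laws, so $t$ a.s.\ avoids the countably many limiting birth/death times), and you correctly record that Lemma~\ref{lem:CV-Z} yields convergence of the motion in probability rather than almost surely.
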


We also need the following estimation. 
\begin{lem}\label{lem:CV-err}
For every $t\geq 0$ and $p\geq 2 (\e^{\bar\dOU t}\vee 1)$, there is 
\[ 
\lim_{\levOU \to \infty} \sup_{n\in \bar{\N}} \Exp{ \| \Xsd_n(t) -\cutOU{\Xsd}_n(t) \|_{\ell^p}^p}  = 0
\]
\begin{proof}
We deduce from \eqref{Ch4:eq:Xl-X} and Theorem~\ref{Ch4:thm:mean} that 
\[
\Exp{ \| \Xsd_n(t) -\cutOU{\Xsd}_n(t) \|_{\ell^p}^p } 
\leq  K_n(p,t) - \cutOU{K}_n(p,t) =K_n(p,t) \bigg( 1-  \exp \Big(\int_0^t- (\kappa_n(p\e^{-\dOU_n r}) -\cutOU{\kappa}_n(p\e^{-\dOU_n r})) \dd r \Big) \bigg),
\]
where $K_n(p,t):= \exp\big(\int_0^t \kappa_n(p \e^{-\dOU_n r}) dr\big)$ and $\cutOU{K}_n(p,t):= \exp\big(\int_0^t \cutOU{\kappa}_n(p \e^{-\dOU_n r}) \dd r\big)$. 
Since for every $\sd =(s_1, s_2, \ldots) \in \Sd$, there is
\[
 \sum_{i=2}^{\infty} \ind{s_i \leq \e^{-\levOU}} s_i^{p\e^{-\dOU_n r}} \leq  \e^{-\levOU (p \e^{-\dOU_n r} -2)} \sum_{i=2}^{\infty} s_i^{2}\leq \e^{-\levOU (p \e^{-\dOU_n r} -2)}(1-s_1)^2,
\]
we have
\[
\kappa_n(p\e^{-\dOU_n r}) -\cutOU{\kappa}_n(q\e^{-\dOU_n r}) = \int_{\Sd} \sum_{i=2}^{\infty} \ind{s_i \leq \e^{-\levOU}} s_i^{p\e^{-\dOU_n r}} \nu_n(\dd \sd) \leq  \e^{-\levOU (p \e^{-\dOU_n r} -2)} \int_{\Sd} (1-s_1)^2 \nu_n(\dd \sd).
\]
It follows that 
\[
\Exp{ \| \Xsd_n(t) -\cutOU{\Xsd}_n(t) \|_{\ell^p}^p } 
\leq  K_n(p,t) \bigg( 1-  \exp \Big(- \int_{\Sd} (1-s_1)^2 \nu_n(\dd \sd)\int_0^t \e^{-\levOU (p\e^{-\dOU_n r} -2)} \dd r \Big) \bigg).
\]
As $p > 2 (\e^{\bar \dOU t} \vee 1)$, we have $\inf_{n\in \bar\N, r\in [0,t]} (p \e^{-\dOU_n r} -2) >0$. We also deduce from \eqref{eq:CV-nu} and \eqref{eq:CV-kappa} that 
\begin{equation}\label{eq:nu-bdd}
\sup_{n\in \bar\N} \int_{\Sd} (1-s_1)^2 \nu_n(\dd \sd)<\infty, 
\end{equation}
and that  
\begin{equation}\label{eq:kappa-bdd}
\sup_{n\in \bar\N}K_n(p,t)\leq \sup_{n\in \bar\N} \exp \bigg(\int_0^t \big|\kappa_n(p \e^{-\dOU_n s})\big| \dd s \bigg)  <\infty.
\end{equation}
Then the claim follows.  
\end{proof}  
\end{lem}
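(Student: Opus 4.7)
The plan is to use inequality~\eqref{Ch4:eq:Xl-X} (valid since $p\geq 1$) together with the explicit moment formula of Theorem~\ref{Ch4:thm:mean} to reduce the problem to comparing the cumulants $\kappa_n$ and $\cutOU{\kappa}_n$. After taking expectations, the upper bound on $\Exp{\|\Xsd_n(t)-\cutOU{\Xsd}_n(t)\|_{\ell^p}^p}$ becomes a difference of two exponentials of integrated cumulants (Theorem~\ref{Ch4:thm:mean} applied separately to $\Xsd_n$ and $\cutOU{\Xsd}_n$; this is legitimate since $2\in\mathrm{dom}(\kappa_n)\cap\mathrm{dom}(\cutOU{\kappa}_n)$ and $p\geq 2(1\vee \e^{\dOU_n t})$). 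I would then factor this as
\[
K_n(p,t)\,\Bigl(1-\exp\!\Bigl(-\int_0^t(\kappa_n-\cutOU{\kappa}_n)(p\e^{-\dOU_n s})\,ds\Bigr)\Bigr),\qquad K_n(p,t):=\expp{\int_0^t\kappa_n(p\e^{-\dOU_n s})\,ds},
\]
so that everything reduces to controlling $(\kappa_n-\cutOU{\kappa}_n)$ uniformly in $n$.

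The heart of the argument is an elementary estimate of the cumulant gap. From~\eqref{Ch4:eq:kappaOU}, $(\kappa_n-\cutOU{\kappa}_n)(q)=\int_{\Sd}\sum_{i\geq 2}\ind{s_i<\e^{-\levOU}}s_i^q\,\nu_n(d\sd)$, and for $q\geq 2$ the elementary bound $s_i^q\ind{s_i<\e^{-\levOU}}\leq \e^{-\levOU(q-2)}s_i^2$ combined with $\sum_{i\geq 2}s_i^2\leq(1-s_1)^2$ yields
\[
(\kappa_n-\cutOU{\kappa}_n)(q)\leq \e^{-\levOU(q-2)}\int_{\Sd}(1-s_1)^2\,\nu_n(d\sd).
\]
The assumption $p>2(\e^{\bar\dOU t}\vee 1)$ is exactly what forces $\inf_{n\in\bar\N,\,s\in[0,t]}(p\e^{-\dOU_n s}-2)\geq\delta$ for some $\delta>0$, producing uniform geometric decay $\e^{-\levOU\delta}$ in the cumulant gap, uniformly in both $n$ and $s$.

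To close the proof I need two uniform-in-$n$ bounds. First, $\sup_n\int_{\Sd}(1-s_1)^2\nu_n(d\sd)<\infty$ follows immediately from the weak convergence of finite measures~\eqref{eq:CV-nu}. Second, $\sup_n K_n(p,t)<\infty$ must be extracted from the pointwise convergence $\kappa_n\to\kappa_\infty$ on $(2,\infty)$ (a consequence of~\eqref{eq:CV-c} and~\eqref{eq:CV-nu}), upgraded to locally uniform convergence via the convexity of each $\kappa_n$, after noting from~\eqref{eq:CV-dOU} that $p\e^{-\dOU_n s}$ ranges over a compact subset of $(2,\infty)$. Applying $1-\e^{-x}\leq x$ on the factored expression then produces the desired uniform vanishing as $\levOU\to\infty$. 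The main obstacle I anticipate is precisely this uniform upgrade from pointwise to locally uniform convergence of the cumulants: once convexity of $\kappa_n$ is invoked to handle it, everything else is bookkeeping.
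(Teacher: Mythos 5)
Your proposal is correct and follows the same route as the paper: the reduction via~\eqref{Ch4:eq:Xl-X} and Theorem~\ref{Ch4:thm:mean}, the factorization into $K_n(p,t)(1-\exp(-\int(\kappa_n-\cutOU{\kappa}_n)))$, the pointwise bound $s_i^q\ind{s_i\le\e^{-\levOU}}\le\e^{-\levOU(q-2)}s_i^2$ with $\sum_{i\ge2}s_i^2\le(1-s_1)^2$, and the two uniform-in-$n$ bounds~\eqref{eq:nu-bdd} and~\eqref{eq:kappa-bdd}. The one place you add substance is~\eqref{eq:kappa-bdd}: the paper simply asserts it follows from~\eqref{eq:CV-nu} and~\eqref{eq:CV-kappa}, whereas you correctly flag that pointwise convergence of $\kappa_n$ on $(2,\infty)$ must be upgraded to locally uniform via convexity (together with boundedness of $(\dOU_n)$ from~\eqref{eq:CV-dOU} so that $p\e^{-\dOU_n s}$ stays in a compact subset of $(2,\infty)$); that is a genuine gap in the paper's exposition that your write-up fills.
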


We are now ready the prove the weak convergence of finite-dimensional distributions. 
\begin{lem}\label{lem:CV-finite}
Suppose that \eqref{eq:CV-0}, \eqref{eq:CV-dOU}, \eqref{eq:CV-c} and \eqref{eq:CV-nu} hold, then Theorem~\ref{thm:CV} holds for finite-dimensional distributions in $\lp[q]$.
\end{lem}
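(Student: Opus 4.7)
The plan is to approximate each $\Xsd_n$ by its truncated version $\cutOU{\Xsd}_n$, obtain convergence of the truncated systems via the coupling of Lemma~\ref{lem:CV-cut}, and then remove the truncation uniformly in $n$ via the uniform tail bound of Lemma~\ref{lem:CV-err}.

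Fix $k \geq 1$, times $0 \leq t_1 < \cdots < t_k \leq T$ and a bounded Lipschitz test function $F \colon (\lp[q])^k \to \R$. Set
\[
\Delta_n^{(\levOU)} := \max_{1 \leq j \leq k} \|\Xsd_n(t_j) - \cutOU{\Xsd}_n(t_j)\|_{\ell^q}.
\]
Since $q > 2(\e^{\bar\dOU T}\vee 1) \geq 2(\e^{\bar\dOU t_j}\vee 1)$ for every $j$, Lemma~\ref{lem:CV-err} applies with $p = q$ at each $t_j$, and by a union bound
\[
\sup_{n \in \bar\N} \EE\bigl[(\Delta_n^{(\levOU)})^q\bigr] \leq \sum_{j=1}^{k} \sup_{n \in \bar\N} \EE\bigl[\|\Xsd_n(t_j) - \cutOU{\Xsd}_n(t_j)\|_{\ell^q}^q\bigr] \xrightarrow[\levOU \to \infty]{} 0.
\]
Combined with Markov's inequality and the uniform continuity of $F$, this yields that for any $\epsilon > 0$ one can choose $\levOU_0 = \levOU_0(\epsilon)$ large enough that
\[
\sup_{n \in \bar\N} \bigl|\EE[F(\Xsd_n(t_1),\ldots,\Xsd_n(t_k))] - \EE[F(\cutOU[\levOU_0]{\Xsd}_n(t_1),\ldots,\cutOU[\levOU_0]{\Xsd}_n(t_k))]\bigr| < \epsilon.
\]

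For the fixed $\levOU_0$, Lemma~\ref{lem:CV-cut} provides a coupling under which $\cutOU[\levOU_0]{\Xsd}_n(t) \to \cutOU[\levOU_0]{\Xsd}_\infty(t)$ in $\ell^q$ in probability. Inspection of its proof shows that this coupling is constructed once and for all at the level of the underlying particles (birth times, lifetimes, displacements and driving OU paths, all coupled by Skorokhod representation), so the convergence holds jointly for any finite collection of times $t_1,\ldots,t_k$. Bounded convergence therefore gives
\[
\lim_{n \to \infty} \EE\bigl[F(\cutOU[\levOU_0]{\Xsd}_n(t_1),\ldots,\cutOU[\levOU_0]{\Xsd}_n(t_k))\bigr] = \EE\bigl[F(\cutOU[\levOU_0]{\Xsd}_\infty(t_1),\ldots,\cutOU[\levOU_0]{\Xsd}_\infty(t_k))\bigr].
\]
A standard $3\epsilon$ argument (the two truncation errors above, plus the limiting statement for fixed $\levOU_0$) then gives weak convergence of $(\Xsd_n(t_1),\ldots,\Xsd_n(t_k))$ to $(\Xsd_\infty(t_1),\ldots,\Xsd_\infty(t_k))$ in $(\lp[q])^k$.

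The only nontrivial point is to ensure that the bound from Lemma~\ref{lem:CV-err} is genuinely uniform in $n \in \bar\N$: this is exactly where the strict inequality $q > 2(\e^{\bar\dOU T}\vee 1)$ is used, so that $\inf_{n \in \bar\N,\, r \in [0,T]} (q\e^{-\dOU_n r} - 2) > 0$ and the factor $\e^{-\levOU(q\e^{-\dOU_n r} - 2)}$ in the proof of Lemma~\ref{lem:CV-err} decays uniformly to $0$; the uniform finiteness of $K_n(q,t)$ and of $\int (1-s_1)^2 \nu_n(d\sd)$ follow from \eqref{eq:nu-bdd}--\eqref{eq:kappa-bdd}. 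Once this uniformity is in hand, the rest is routine bookkeeping.
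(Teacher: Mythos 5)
Your proposal is correct, and it reaches the conclusion by a genuinely different (and slightly more economical) route than the paper. The paper proceeds in two stages: it first shows that $B_r:=\{\xs\in\lp[q]:\|\xs\|_{\ell^{q'}}\le r\}$ (with $2(\e^{\bar\dOU T}\vee 1)<q'<q$) is compact in $\lp[q]$ and uses Theorem~\ref{Ch4:thm:mean} together with \eqref{eq:kappa-bdd} to establish tightness of $(\Xsd_n(t))_{n\in\bar\N}$; it then takes test functions $F$ depending on finitely many coordinates and runs a four-$\epsilon$ argument that hinges on $F$ being \emph{uniformly continuous on the compact $B_r$}, combining the tightness with Lemma~\ref{lem:CV-err} and Lemma~\ref{lem:CV-cut}. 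You instead test directly against bounded Lipschitz functions on $(\lp[q])^k$, which by the Portmanteau theorem is already equivalent to weak convergence on the Polish space $\lp[q]$, so no separate tightness/compactness step is needed; the global modulus of continuity of a Lipschitz $F$ replaces the role of the compact sets $B_r$. Both arguments use the same two ingredients (uniform truncation control from Lemma~\ref{lem:CV-err}, coupling convergence of the truncated systems from Lemma~\ref{lem:CV-cut}) and the same $\epsilon$-bookkeeping. One small remark: your parenthetical ``inspection of its proof'' for the joint-in-time convergence is not really needed --- the coupling in Lemma~\ref{lem:CV-cut} is a single coupling that works for every $t$, and convergence in probability for each of finitely many $t_j$ already implies joint convergence in probability, so the lemma statement alone suffices. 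You also correctly flag that the strict inequality $q>2(\e^{\bar\dOU T}\vee 1)$ (rather than $\ge$, as misstated in Lemma~\ref{lem:CV-err}) is what makes $\inf_{n,r}(q\e^{-\dOU_n r}-2)>0$ and hence the truncation bound uniform in $n$. The trade-off between the two routes: your argument is shorter for the finite-dimensional statement, whereas the paper's explicit tightness in $\lp[q]$ (via the compacts $B_r$) is also reused conceptually and motivates the enlargement from exponent $2(\e^{\bar\dOU T}\vee 1)$ to $q$.
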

	\begin{proof}
For simplicity, we shall only establish the convergence for one-dimensional; similar arguments hold for multi-dimensional case. 
               
		We first claim that for $q'\in \big(2 (\e^{\bar{\theta}T} \vee 1) ,q\big)$, the set 
		\[
		B_r:= \big\{ \xs\in \lp[q]:~ \|\xs\|_{\ell^{q'}}\leq r  \big\},
		\]
		is a compact subset in $\lp[q]$. Indeed, for any sequence in $B_r$, we may use the diagonal procedure to extract a subsequence that converges pointwisely, and the limit belongs to $B_r$ due to Fatou's lemma. Since $B_r$ is equisummable in $\ell^q$ (because $q'<q$), the convergence also holds for $\ell^q$-distance. 
Next, it follows from Theorem~\ref{Ch4:thm:mean} that 
		\[
                \Prob{\Xsd_n(t)\not\in B_r} \leq r^{-q'} \Exp{\|\Xsd_n(t) \|_{\ell^{q'}}^{q'}} 
                = r^{-q'} \exp\big(\int_0^t \kappa_n(q' \e^{-\dOU_n r}) \dd r\big), \qquad t\in [0,T].                
                \]
                We hence deduce from \eqref{eq:kappa-bdd} that the sequence $(\Xsd_n(t), n\in \bar\N)$ is tight in $\lp[q]$. 

So it remains to prove the uniqueness of the limit of a converging subsequence. Let $k\in\N$ and $F: \R^{k}_+ \to [0,1]$ be a continuous function. For every $\xs=(x_1, x_2, \ldots) \in \lp[q]$, write $F(\xs):=F(x_1, \ldots, x_k)$. Then $F$ is continuous on $\lp[q]$. We shall prove for every $t\in [0,T]$ that
		\[ 
		\lim_{n\to \infty} \Exp{F(\Xsd_n(t) )} = \Exp{F(\Xsd_{\infty}(t) )}.
		\]
		If this holds for every $k\in \N$ and such function $F$, then we deduce the uniqueness of the limit. 

For every $\levOU\geq 0$ there is
\begin{align*}
& 	\big|\Exp{F(\Xsd_n(t) )-F(\Xsd_{\infty}(t) )}\big|\\
&\leq \left|\Exp{F(\cutOU{\Xsd}_n(t) )-F(\cutOU{\Xsd}_{\infty}(t) )}\right|+ \left|\Exp{F(\Xsd_n(t) )-F(\cutOU{\Xsd}_{n}(t) )}\right| +\left|\Exp{F(\Xsd_{\infty}(t) )-F(\cutOU{\Xsd}_{\infty}(t) )}\right|.
\end{align*}
Let us estimate these three terms. 
Fix an arbitrarily small $\epsilon>0$. By the tightness of $(\Xsd_n(t), n\in \bar\N)$ we may choose $r>0$ large enough such that 
\begin{equation*}
\Prob{\Xsd_n(t)\not\in B_r}<\epsilon \qquad  \text{ for every } n\in \bar\N.
\end{equation*}
Note that if $\Xsd_n(t)\in B_r$, then $\cutOU{\Xsd}_n(t)\in B_r$ for every $\levOU\geq 0$. So we have 
\[
\Prob{\cutOU{\Xsd}_n(t)\not\in B_r} \leq \Prob{\Xsd_n(t)\not\in B_r}< \epsilon.
\]
As $F$ is uniformly continuous on the compact subset $B_r$ in $\lp[q]$, there exists $\eta>0$ such that 
\[
|F(\xs) - F(\xs')| <\epsilon, \qquad \text{for all } \xs,\xs' \in B_r \text{ with } \|\xs-\xs'\|_{\ell^q}<\eta.
\]
Using Lemma~\ref{lem:CV-err} and Markov inequality, we next choose $\levOU$ large enough such that 
\[
\sup_{n\in \bar{\N}} \Prob{ \| \Xsd_n(t) -\cutOU{\Xsd}_n(t) \|_{\ell^q}\geq \eta} \leq \epsilon.
\]
We hence deduce that
\begin{align*}
&\left|\Exp{F(\Xsd_n(t) )-F(\cutOU{\Xsd}_{n}(t) )}\right| \\
&\leq \Prob{\cutOU{\Xsd}_n(t)\not\in B_r} +\Prob{\Xsd_n(t)\not\in B_r}+\Prob{\| \Xsd_n(t) -\cutOU{\Xsd}_n(t) \|_{\ell^q}\geq \eta}+\epsilon <4 \epsilon, \quad \text{for all}~ n\in \bar{\N}.
\end{align*}
By Lemma~\ref{lem:CV-cut}, we may further choose $n$ large enough such that $\Prob{\| \cutOU{\Xsd}_n(t) -\cutOU{\Xsd}_{\infty}(t) \|_{\ell^q}\geq \eta}< \epsilon$.
Applying the same arguments to $\left|\Exp{F(\cutOU{\Xsd}_n(t) )-F(\cutOU{\Xsd}_{\infty}(t) )}\right|$ completes the proof. 
 	\end{proof}

\paragraph{Tightness}
We finally complete the proof of Theorem~\ref{thm:CV} by checking Aldous' tightness criterion (see e.g. Theorem~16.11 in \cite{Kallenberg}). 
\begin{lem}
Let $(h_n, n\in \N)$ be a sequence of constants with $h_n>0$ and $\lim_{n\to \infty} h_n = 0$, and $(\tau_n, n\in \N)$ be a sequence of $\Xsd_n$-stopping times with $\tau_n <T$ almost surely. Suppose that \eqref{eq:CV-0}, \eqref{eq:CV-dOU}, \eqref{eq:CV-c} and \eqref{eq:CV-nu} hold, then we have for every $q> 2 (\e^{\bar\dOU T} \vee 1) $ with $\bar{\dOU}:= \sup_{n\in\bar{\N}} \dOU_n$,
\[ \lim_{n\to \infty} \|\Xsd_n (\tau_n) -\Xsd_{n}(\tau_n + h_n) \|_{\ell^{q  }} = 0 \quad \text{in probability}. \]
\begin{proof}
Denote $\Xsd_n (\tau_n ) := (X_{n,1}(\tau_n), X_{n,2}(\tau_n),\ldots)$ and 
\[
\Xsd_n (\tau_n )^{\e^{-\dOU_n h_n}}:=(X_{n,1}(\tau_n)^{\e^{-\dOU_n h_n}}, X_{n,2}(\tau_n)^{\e^{-\dOU_n h_n}},\ldots).
 \]
An elementary inequality leads to
\begin{align}
&\Exp{\big\|\Xsd_n (\tau_n )- \Xsd_n (\tau_n +h_n )\big\|_{\ell^{q }}^q} \nonumber \\
&\leq
2^{q-1} \bigg( \Exp{  \big\|\Xsd_n (\tau_n ) -\Xsd_n (\tau_n )^{\e^{-\dOU_n h_n}} \big\|_{\ell^{q}}^q  +  \big\|\Xsd_n (\tau_n )^{\e^{-\dOU_n h_n}} - \Xsd_n (\tau_n + h_n )\big\|_{\ell^{q}}^q  }\bigg). \label{eq:tight} 
\end{align}
We shall evaluate the two expected values in \eqref{eq:tight} respectively. Let us start with the first one. 
Applying the mean value theorem to the function $ x \mapsto  X_{n,i} (\tau_n )^x$, we obtain that 
\[
  \big| X_{n,i} (\tau_n ) -X_{n,i} (\tau_n )^{\e^{-\dOU_n h_n}} \big|  
\leq   \ind{X_{n,i} (\tau_n ) > 0} \max\Big( X_{n,i} (\tau_n ),X_{n,i} (\tau_n )^{\e^{-\dOU_n h_n}}  \Big) \Big|\log \big( X_{n,i} (\tau_n )\big) \Big| \big|1 - \e^{-\dOU_n h_n}\big|. 
\]
Denote $c_I:= \inf_{n\in \N} \e^{-\dOU_n h_n}$ and $c_S:= \sup_{n\in \N} \e^{-\dOU_n h_n}$, then 
 \[
\max\Big( X_{n,i} (\tau_n ),X_{n,i} (\tau_n )^{\e^{-\dOU_n h_n}}\Big) \leq X_{n,i} (\tau_n )+X_{n,i} (\tau_n )^{c_I} + X_{n,i} (\tau_n )^{c_S}. 
\]
As $h_n \to 0$ and $q >2 (\e^{\bar\dOU T} \vee 1)$, without loss of generality, we may assume that $\sup_{n\in\N} |h_n|$ is small enough such that $q c_I  >2 (\e^{\bar\dOU T} \vee 1)$. 
Then fix $\delta>0$ such that $q(c_I\wedge 1) (1-\delta) > 2 (\e^{\bar\dOU T} \vee 1)$. 
It is elementary to see that there exists $c_{\delta}>0$ such that $|\log x| \leq c_{\delta}(x^{\delta} + x^{-\delta})$ for all $x>0$, then we have
\[
\big\|\Xsd_n (\tau_n ) -\Xsd_n (\tau_n )^{\e^{-\dOU_n h_n}} \big\|_{\ell^{q}}^q  \leq c_{\delta} \big|1 - \e^{-\dOU_n h_n}\big| \sum_{k=1}^{6} \Big(\sum_{i=1}^{\infty}   X_{n,i} (\tau_n )^{q_k}\Big),
\]
where $\{q_k\}$ are constants $\{ qc_I \pm q\delta, q\pm q\delta, q c_S \pm q\delta\}$. These constants are all greater than $2 (\e^{\bar \dOU T} \vee 1)$ thanks to the choice of $\delta$, so we have martingales by Proposition~\ref{Ch4:prop:mart}. As $\tau_n <T$ a.s., using the optional stopping theorem to these martingales yields
 \[
\Exp{\big\|\Xsd_n (\tau_n ) -\Xsd_n (\tau_n )^{\e^{-\dOU_n h_n}} \big\|_{\ell^{q}}^q } 
\leq c_{\delta} \big|1 - \e^{-\dOU_n h_n}\big| \sum_{k=1}^6 \exp \Big(\int_0^T \big|\kappa_n(q_k \e^{-\theta_n r})\big| \dd r\Big).  
\]
As $\theta_n h_n \to 0$ and \eqref{eq:kappa-bdd} holds, this leads  to 
 \[
\lim_{n\to \infty}
\Exp{\big\|\Xsd_n (\tau_n ) -\Xsd_n (\tau_n )^{\e^{-\dOU_n h_n}} \big\|_{\ell^{q}}^q } =0.
\]

We next proceed to the second term in \eqref{eq:tight}. From the strong Markov property (see Proposition~\ref{Ch4:prop:branching} and Remark~\ref{Ch4:rem:SMP}), we have that
\[ 
\Exp{ \|\Xsd_n (\tau_n )^{\e^{-\dOU_n h_n}}- \Xsd_n (\tau_n +h_n )\|_{\ell^{q }}^q } 
 = \Exp{ \|\Xsd_n (\tau_n )^{\e^{-\dOU_n h_n}}\|_{\ell^{q}}^q }  \Exp{ \|\Xsd_n (h_n )- (1, 0,\ldots)\|_{\ell^{q}}^q } .
 \]
 Again, as $\tau_n \leq T$ a.s., it follows from Proposition~\ref{Ch4:prop:mart} and the optional stopping theorem that 
\[ 
\Exp{ \|\Xsd_n (\tau_n )^{\e^{-\dOU_n h_n}}\|_{\ell^{q}}^q } \leq \exp \bigg(\int_0^T \big| \kappa_n(q\e^{-\dOU_n h_n} \e^{-\dOU_n s}) \big| \dd s \bigg)\leq \exp \bigg(\int_0^{T+h_n} \big| \kappa_n(q \e^{-\dOU_n s}) \big| \dd s \bigg).
\]
We hence deduce from \eqref{eq:kappa-bdd} that 
\begin{equation}\label{eq:tight-bdd}
 \sup_{n\in \N}\Exp{ \|\Xsd_n (\tau_n )^{\e^{-\dOU_n h_n}}\|_{\ell^{q}}^q } \leq \sup_{n\in \N}\exp \bigg(\int_0^{T+h_n} \big| \kappa_n(q \e^{-\dOU_n s}) \big| \dd s \bigg)<\infty.
\end{equation}
Write $\tilde{\Xsd}_n (h_n )$ for the sequence obtained from $\Xsd_n (h_n )$ by exchanging the selected fragment $X_{n,*} (h_n )$ (see Lemma~\ref{Ch4:lem:selected}) and the largest one. Rearranging sequences in decreasing order reduces the $\ell^q$-distance, so
 \[\Exp{ \|\Xsd_n (h_n )- (1, 0,\ldots)\|_{\ell^{q}}^q }  \leq \Exp{ \|\tilde{\Xsd}_n (h_n )- (1, 0,\ldots)\|_{\ell^{q}}^q }.\] 
 Furthermore, it follows from Lemma~\ref{Ch4:lem:selected} and Theorem~\ref{Ch4:thm:mean} that
\[
\Exp{ \|\tilde{\Xsd}_n (h_n )- (1, 0,\ldots)\|_{\ell^{q}}^q }  
= \Exp{ |X_{n,*} (h_n )- 1|^q} + \exp \bigg(\int_0^{h_n} \kappa_n(q \e^{-\dOU_n s}) \dd s \bigg)-\exp \bigg(\int_0^{h_n}\Phi_{n,*}(q \e^{-\dOU_n s})   \dd s \bigg).
\]
On the one hand, for an even integer $N>q$, by H\"older's inequality we have  
\begin{align*}
 \Exp{ |X_{n,*} (h_n )- 1|^q} \leq \Exp{ |X_{n,*} (h_n )- 1|^N}^{q/N} 
&= \Exp{ \sum_{k=0}^N \binom{N}{k} (-1)^{N-k}X_{n,*} (h_n )^k}^{q/N}\\
&= \bigg( \sum_{k=0}^N \binom{N}{k}(-1)^{N-k}\exp \Big( \int_0^{h_n}   \Phi_{n,*}(k\e^{-\dOU_n s}) \dd s \Big)\bigg)^{q/N}.
\end{align*}
Since $\lim_{n\to \infty} \Phi_{n,*}(p) =\Phi_{\infty,*}(p)$ for every $p\geq 0$, we deduce that 
\[\lim_{n\to \infty}\exp \Big( \int_0^{h_n}   \Phi_{n,*}(k\e^{-\dOU_n s}) \dd s \Big) = 1, \quad \text{for every}~k = 0,1,\ldots , N, \]
which leads to
 \[ \lim_{n\to \infty} \Exp{ |X_{n,*} (h_n )- 1|^q}  = 0. \]
On the other hand, for every $p\geq 2$, there is 
\[ \kappa_n(p) -\Phi_{n,*}(p) =\int_{\Sd} \sum_{i=2}^{\infty} s_i^p \dd \sd \leq\int_{\Sd} (1-s_1)^2 \nu_n( \dd \sd). \]
Then we have
\begin{align}
&\exp \bigg(\int_0^{h_n} \kappa_n(q \e^{-\dOU_n s}) \dd s \bigg)-\exp \bigg(\int_0^{h_n}\Phi_{n,*}(q \e^{-\dOU_n s})   \dd s \bigg)\nonumber \\
&\qquad \leq  \exp \bigg(\int_0^{h_n} \kappa_n(q \e^{-\dOU_n s}) \dd s \bigg) \bigg(1 - \exp\Big(- h_n \int_{\Sd}  (1-s_1)^2 \nu_n(\dd \sd) \Big)   \bigg). \label{eq:CV-tight-select}
\end{align} 
Since \eqref{eq:nu-bdd} and \eqref{eq:kappa-bdd} hold, then \eqref{eq:CV-tight-select} converges to $0$ as $n\to \infty$. We hence conclude that 
\[\lim_{n\to \infty}\Exp{ \|\Xsd_n (h_n )- (1, 0,\ldots)\|_{\ell^{q}}^q } =0.\]
This and \eqref{eq:tight-bdd} entail that
\[ 
\lim_{n\to \infty} \Exp{ \|\Xsd_n (\tau_n )^{\e^{-\dOU_n h_n}}- \Xsd_n (\tau_n +h_n )\|_{\ell^{q }}^q } = 0.
\]
We have completed the proof. 
\end{proof}
\end{lem}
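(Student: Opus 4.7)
The plan is to verify convergence in $L^q(\PP)$ by interposing the OU-rescaled configuration $\Xsd_n(\tau_n)^{\e^{-\theta_n h_n}} := \big(X_{n,i}(\tau_n)^{\e^{-\theta_n h_n}}\big)_{i\geq 1}$ between $\Xsd_n(\tau_n)$ and $\Xsd_n(\tau_n + h_n)$. Using the elementary inequality $(a+b)^q \leq 2^{q-1}(a^q+b^q)$, this reduces the claim to showing that both $\Exp{\|\Xsd_n(\tau_n) - \Xsd_n(\tau_n)^{\e^{-\theta_n h_n}}\|_{\ell^q}^q}$ and $\Exp{\|\Xsd_n(\tau_n)^{\e^{-\theta_n h_n}} - \Xsd_n(\tau_n+h_n)\|_{\ell^q}^q}$ tend to $0$. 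Since $\theta_n h_n \to 0$, the first term quantifies how close the map $x\mapsto x^{\e^{-\theta_n h_n}}$ is to the identity on the coordinates, and the second term measures the evolution of the process from time $\tau_n$ to $\tau_n + h_n$ given its value at $\tau_n$, which the OU property P2 relates to a starting-from-$\mathbf 1$ growth-fragmentation.

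For the first term, I would bound each coordinate by applying the mean value theorem to $x \mapsto X_{n,i}(\tau_n)^x$ on the interval between $1$ and $\e^{-\theta_n h_n}$, producing a factor $|1 - \e^{-\theta_n h_n}|$ multiplied by $|\log X_{n,i}(\tau_n)|$ and a max of powers. Using $|\log x|\leq c_\delta(x^\delta + x^{-\delta})$ and choosing $\delta$ small enough that all exponents $q_k$ arising in the final sum strictly exceed $2(\e^{\bar\theta T}\vee 1)$, each $\sum_i X_{n,i}(\tau_n)^{q_k}$ fits into the additive martingale framework of Proposition~\ref{Ch4:prop:mart}. Since $\tau_n \leq T$ almost surely, optional stopping combined with the uniform bound \eqref{eq:kappa-bdd} on the cumulants $\kappa_n$ yields a uniform-in-$n$ $L^1$-bound on these sums, and the prefactor $|1-\e^{-\theta_n h_n}|\to 0$ finishes this piece.

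For the second term, the strong Markov property (Remark~\ref{Ch4:rem:SMP}) combined with properties P1 and P2 shows that, conditionally on $\mathcal G_{\tau_n} := \sigma(\Xsd_n(s),\, s\leq \tau_n)$, the process $\Xsd_n(\tau_n+h_n)$ is distributed as the decreasing rearrangement of $\big(X_{n,i}(\tau_n)^{\e^{-\theta_n h_n}}\, X_n^{[i]}(h_n)\big)_{i}$, where $(X_n^{[i]})_i$ are i.i.d. copies of $\Xs_n$. Since the decreasing rearrangement decreases $\ell^q$-distance, I would factor the expectation into $\Exp{\|\Xsd_n(\tau_n)^{\e^{-\theta_n h_n}}\|_{\ell^q}^q}\cdot \Exp{\|\Xsd_n(h_n)-(1,0,\ldots)\|_{\ell^q}^q}$. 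The first factor is uniformly bounded by optional stopping applied to the additive martingale from Proposition~\ref{Ch4:prop:mart} together with \eqref{eq:kappa-bdd}. For the second factor, I would exchange the largest fragment with the selected one (which does not increase the $\ell^q$-norm of the difference from $\mathbf 1$) and split into $\Exp{|X_{n,*}(h_n)-1|^q}$ plus the mass lost to the remaining fragments. The first piece tends to $0$ via a Hölder bound to an even integer $N>q$ combined with the explicit Laplace transform from Lemma~\ref{Ch4:lem:selected}, and the second via the uniform estimate $\kappa_n(p)-\Phi_{n,*}(p) \leq \int_\Sd(1-s_1)^2\nu_n(d\sd)$ together with \eqref{eq:nu-bdd}.

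The main obstacle is ensuring that every moment bound is uniform in $n \in \N$, which forces the choice $q>2(\e^{\bar\theta T}\vee 1)$ and the auxiliary parameter $\delta$ so that all $q_k = q \cdot (c_I \wedge 1)(1\pm\delta), q c_S(1\pm\delta)$ (with $c_I,c_S$ the inf and sup of $\e^{-\theta_n h_n}$) remain in the strict interior of the region $(2(\e^{\bar\theta T}\vee 1),\infty)$ where the additive martingales of Proposition~\ref{Ch4:prop:mart} are integrable at $T$. The delicate bookkeeping is to verify that $\sup_{n}\int_0^{T+h_n}|\kappa_n(q_k\e^{-\theta_n s})|\,ds$ is finite uniformly in $n$, which reduces via convexity of $\kappa_n$ on its domain and \eqref{eq:CV-kappa} to the uniform bound \eqref{eq:kappa-bdd} stated earlier in the paper.
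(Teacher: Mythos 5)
Your proposal follows the paper's proof essentially step by step: the same interposition of $\Xsd_n(\tau_n)^{\e^{-\theta_n h_n}}$, the same $2^{q-1}$-splitting, the mean value theorem plus the $|\log x|\leq c_\delta(x^\delta+x^{-\delta})$ bound with the same choice of $\delta$ for the first term, and the strong Markov factorization followed by the selected-fragment swap, the Hölder/even-integer trick, and the $\kappa_n-\Phi_{n,*}$ estimate for the second term. The approach and the supporting lemmas (Proposition~\ref{Ch4:prop:mart}, Lemma~\ref{Ch4:lem:selected}, \eqref{eq:kappa-bdd}, \eqref{eq:nu-bdd}) match the paper's argument throughout.
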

%
%
%
%
 \subsection{A law of large numbers for the inward case}\label{Ch4:sec:LLN}
In this subsection we fix an OU type growth-fragmentation $\Xsd$ with characteristics $(\sigma, c, \nu, \dOU)$ and cumulant $\kappa$, and always suppose that $\Xsd$ is \emph{inward}, i.e. $\dOU>0$.  
We shall study the long-time asymptotic behavior of $\Xsd$.

Before stating our main results, Theorem~\ref{Ch4:thm:LLN}, let us introduce the required assumptions. 
We first suppose that the cumulant $\kappa$ satisfies
\begin{equation}\label{Ch4:eq:kappa0}
  \kappa(0)= \int_{\Sd}( \#\sd  -1 )\nu (\dd \sd) <\infty,
\end{equation}
where $\#\sd:= \sum_{i=1}^{\infty}\ind{s_i> 0}$. Denote 
\[\Sd_1:= \{\sd \in \Sd:~ s_1>0, s_2=s_3=\ldots =0 \},\]
then \eqref{Ch4:eq:kappa0} forces that $\nu(\Sdtwo)<\infty$. So the branching rate is finite and on average a finite number of child particles are generated in each splitting event. 
Denote the number of particles at time $t\geq 0$ by 
\begin{equation*}
  N(t):= \sum_{i=1}^{\infty} \ind{X_i(t)\neq 0}. 
\end{equation*}
Under condition \eqref{Ch4:eq:kappa0}, the process $(N(t),t\geq 0)$ is simply a \emph{branching process}; see e.g. \cite{AthreyaNey} for basic properties. In particular, it is finite at all time. 

We further suppose that 
 \begin{equation}\label{Ch4:eq:non-extinct}
\kappa(0) >0,
 \end{equation}
 which is known as the \emph{supercritical} condition for the branching process $N$. It is known (Theorem~III.4.1 in \cite{AthreyaNey}) that \eqref{Ch4:eq:non-extinct} is a sufficient and necessary condition such that the following \emph{non-extinction} event has strictly positive probability:
\begin{equation*}
  \big\{ N(t) >0 ~\text{for all}~ t\geq 0 \big\} .
\end{equation*}

 We next replace \eqref{Ch4:eq:kappa0} by a stronger condition
 \begin{equation}\label{Ch4:eq:gamma}
 \text{there exists } \gamma\in (1,2], \text{ such that }  \int_{\Sdtwo} (\#\sd)^{\gamma}  \nu (\dd \sd) <\infty.
 \end{equation}
The purpose of this assumption is to make use of the following well-known martingale convergence result.

\begin{lem} [{\cite[Theorem~5]{Biggins:1992}}]\label{Ch4:lem:Nt}
   Suppose that \eqref{Ch4:eq:non-extinct} and \eqref{Ch4:eq:gamma} hold. 
Then the martingale 
\[
M_t:= \e^{-\kappa(0)t} N(t) 
\]
 converges to a limit $M_{\infty}$ as $t\to \infty$,  almost surely and in $L^{\gamma}(\mathbb P)$. 
Furthermore, conditionally on non-extinction, the limit $M_{\infty}$ is strictly positive.
\end{lem}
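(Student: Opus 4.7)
The plan is to treat $N$ as a continuous-time Markov branching process and to follow a strategy in the spirit of Biggins~\cite{Biggins:1992}. Under \eqref{Ch4:eq:kappa0} we have $0\in\dom$, so Proposition~\ref{Ch4:prop:mart}(ii) applied with $\alpha=0$ already asserts that $M_t$ is a non-negative martingale (noting that $\sum_i X_i(t)^0=N(t)$ by the convention $0^0=0$), and hence $M_t\to M_\infty$ almost surely for free. The real work lies in upgrading this to $L^\gamma$ convergence and in the dichotomy between the events $\{M_\infty=0\}$ and non-extinction.

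For the $L^\gamma$ convergence I would show $\sup_{t\geq0}\Exp{M_t^\gamma}<\infty$ and invoke Doob's inequality. Condition on the first splitting event, which occurs at an exponential time $T_1$ with parameter $\lambda:=\nu(\Sdtwo)$ and produces $K:=\#\sd$ children with $\sd$ drawn from $\nu(\cdot\mid\Sdtwo)$. The branching property yields the distributional identity
\begin{equation}
M_t \stackrel{d}{=} e^{-\kappa(0) T_1}\sum_{i=1}^K M_{t-T_1}^{(i)}\quad\text{on } \{T_1\leq t\},
\end{equation}
with iid copies $M^{(i)}$ of $M$. Writing each summand as $1+(M^{(i)}-1)$ and applying the von Bahr--Esseen inequality, which is sharp precisely for $\gamma\in(1,2]$, the centered part contributes $O(K)\cdot\Exp{|M^{(i)}_{t-T_1}-1|^\gamma}$ while the mean part contributes $K^\gamma$. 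Taking expectations yields a renewal-type inequality for $q(t):=\Exp{M_t^\gamma}$ of the form
\begin{equation}
q(t) \leq A\int_0^t e^{-(\lambda+\gamma\kappa(0))s}\,q(t-s)\,ds + B,
\end{equation}
with $A$ proportional to $\Exp{K}$ and $B$ proportional to $\Exp{K^\gamma}$, both finite by \eqref{Ch4:eq:gamma}. A standard renewal/Gr\"onwall comparison then gives the required uniform bound.

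For the dichotomy, set $p:=\Prob{M_\infty=0}$. Passing to the limit $t\to\infty$ in the identity above, and using that $e^{-\kappa(0)T_1}>0$ almost surely, one obtains $\{M_\infty=0\}=\bigcap_{i=1}^K\{M^{(i)}_\infty=0\}$ up to null sets, whence $p=\Exp{p^K}$, the Galton--Watson extinction fixed-point equation for the embedded offspring distribution. Supercriticality \eqref{Ch4:eq:non-extinct} ensures that this equation admits a smallest root $p^\ast<1$, while the $L^1$ convergence (already implied by the $L^\gamma$ bound) forces $\Exp{M_\infty}=1$ and hence $p<1$. A standard argument then identifies $\{M_\infty=0\}$ with the extinction event up to null sets, so that $M_\infty>0$ almost surely on non-extinction.

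The hardest step is the $L^\gamma$ bound: the von Bahr--Esseen inequality must be combined with the branching identity in such a way that the prefactor of the recursion scales like $\Exp{K}$ rather than $\Exp{K^\gamma}$, so that assumption \eqref{Ch4:eq:gamma} really is enough. This is precisely why \eqref{Ch4:eq:gamma} is stated in terms of $(\#\sd)^\gamma$ and why $\gamma\in(1,2]$ is the natural range; outside this range the sharp Marcinkiewicz--Zygmund-type inequality degrades and strictly stronger moment conditions would be needed.
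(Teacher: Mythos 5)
Note first that the paper does not prove this lemma at all: it quotes Theorem~5 in Biggins (1992) directly, so the only thing to compare your argument against is what Biggins actually does. Your Step~1 (a.s.\ convergence via Proposition~\ref{Ch4:prop:mart} with $\alpha=0$, using $0\in\dom$ under \eqref{Ch4:eq:kappa0}) and Step~3 (fixed-point dichotomy from $p=\Exp{p^K}$ plus $\Exp{M_\infty}=1$) are both correct.

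Step~2 has a genuine gap. Conditioning on the \emph{first} split does not produce a renewal inequality that closes, because the kernel mass exceeds one. Concretely, with $\lambda:=\nu(\Sdtwo)$ and $K$ the offspring count, the identity $\lambda\Exp{K}=\lambda+\kappa(0)$ shows that \emph{before} any numerical constants the kernel mass in
\begin{equation}
 q(t)\le B+A\int_0^t e^{-(\lambda+\gamma\kappa(0))s}\,q(t-s)\,ds
\end{equation}
is $\lambda\Exp{K}/(\lambda+\gamma\kappa(0))=(\lambda+\kappa(0))/(\lambda+\gamma\kappa(0))<1$, but after the unavoidable factors from von Bahr--Esseen and from $|a+b|^\gamma\le 2^{\gamma-1}(|a|^\gamma+|b|^\gamma)$ the effective $A$ is at least $2^{\gamma}\lambda\Exp{K}$, and then $A>\lambda+\gamma\kappa(0)$ always, since $2^{\gamma}(\lambda+\kappa(0))\ge 2\lambda+2\kappa(0)>\lambda+\gamma\kappa(0)$ for $\gamma\le 2$. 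A Gr\"onwall comparison on the Volterra inequality with kernel mass $>1$ gives an exponentially \emph{growing} bound, not a uniform one, so your approach does not establish $\sup_t\Exp{M_t^\gamma}<\infty$.

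The working argument (Biggins's) conditions on $\F_{n\delta}$ at lattice times rather than on the first split. Writing
$M_{(n+1)\delta}-M_{n\delta}=e^{-\kappa(0)(n+1)\delta}\sum_{j\le N(n\delta)}\bigl(N_j(\delta)-e^{\kappa(0)\delta}\bigr)$
with $N_j$ i.i.d.\ copies of $N(\delta)$, applying von Bahr--Esseen conditionally and then using the \emph{known} first moment $\Exp{N(n\delta)}=e^{\kappa(0)n\delta}$ yields
$\Exp{|M_{(n+1)\delta}-M_{n\delta}|^\gamma}\le C\,e^{-(\gamma-1)\kappa(0)n\delta}$,
which is geometrically summable precisely because $\gamma>1$ and $\kappa(0)>0$. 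This removes the self-referential recursion entirely: there is no kernel mass to control. One then bounds $\sup_n\Exp{M_{n\delta}^\gamma}$ by applying the same inequality to the partial sums of martingale differences, and passes from lattice to continuous time via Doob. You also need $\Exp{N(\delta)^\gamma}<\infty$ as input, which does require a short separate check under \eqref{Ch4:eq:gamma} (here a Gr\"onwall bound on $\Exp{N(t)^\gamma}$ giving \emph{finiteness for each fixed $t$}, not uniform boundedness, suffices and does work).
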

In particular, Lemma~\ref{Ch4:lem:Nt} entails that $(M_t)_{t\geq 0}$ is bounded in $L^{\gamma}(\mathbb P)$, i.e. there exists $C_{\gamma}>0$ such that
\begin{equation}\label{Ch4:eq:bdd-gamma}
  \sup_{t\geq 0} \Exp{M_t^{\gamma}} <C_{\gamma}.
\end{equation} 
Note that \eqref{Ch4:eq:gamma} is also a necessary condition for $M_t$ to have finite $\gamma$-moment \cite[Corollary~III~6.1]{AthreyaNey}.

The last assumption is that 
  \begin{equation}\label{Ch4:eq:Pi0}
    \int_{\Sd} \sum_{i=1}^{\infty}\ind{0<s_i< \frac{1}{2}} \log (|\log s_i|) \nu(\dd \sd)<\infty.
  \end{equation}
To understand this condition, we recall from Lemma~\ref{Ch4:lem:Htransform} that, under condition $\kappa(0)<\infty$, 
\begin{equation}\label{Ch4:eq:Phi0}
  \Phi_{0}(q):= \kappa(q)-\kappa(0), \qquad q\geq 0
\end{equation}
is the Laplace exponent of some L\'evy process. Then we observe from Lemma~\ref{Ch4:lem:inv} that \eqref{Ch4:eq:Pi0} is the sufficient and necessary condition that an OU type process with characteristics $(\Phi_{0},\dOU)$ possesses a unique invariant probability distribution $\Pi_0$. Let $\tilde{\Pi}_0$ be the image of $\Pi_0$ by the map $y\mapsto \e^y$, so $\tilde{\Pi}_0$ is a probability measure on $\R_+$ with finite Mellin transform
	\begin{equation*}
	\int_{\R_+} x^q ~\tilde{\Pi}_0(\dd x) = \expp{\int_0^{\infty}\big(\kappa(\e^{-\dOU s} q) - \kappa(0) \big) \dd s}, \qquad q\geq 0.
	\end{equation*}

We now state the main result of this section. 
\begin{thm}\label{Ch4:thm:LLN}
  Suppose that \eqref{Ch4:eq:non-extinct}, \eqref{Ch4:eq:gamma} and \eqref{Ch4:eq:Pi0} hold.
Then for every bounded and continuous function $f$ on $\R_+$, 
  \begin{equation}\label{Ch4:eq:LLN}
    \lim_{t\to \infty} \e^{-\kappa(0) t} \sum_{i=1}^{\infty}\ind{X_i(t)>0} f( X_i(t)) =  \prm{\tilde{\Pi}_0}{f} M_{\infty} \qquad \text{in}~L^{\gamma}(\PP).
  \end{equation}
\end{thm}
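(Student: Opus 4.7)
The plan is to establish the conclusion via the classical strategy for law-of-large-numbers results in supercritical branching systems, combining a many-to-one formula, the $L^\gamma$-boundedness of the branching-process martingale from Lemma~\ref{Ch4:lem:Nt}, and the branching decomposition.

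First, I would prove a many-to-one identity: for every bounded measurable $g:\R_+\to\R$ and every $t\geq 0$,
\[
\Exp{\sum_{i=1}^\infty \ind{X_i(t)>0}\,g(X_i(t))} = \e^{\kappa(0) t}\,\Exp{g(\e^{Y(t)})},
\]
where $Y$ is an OU type process with characteristics $(\Phi_0,\dOU)$ starting from $0$ and $\Phi_0$ is the Laplace exponent given in \eqref{Ch4:eq:Phi0}. This can be obtained either from Proposition~\ref{Ch4:prop:gf-eq} by verifying that $\e^{-\kappa(0)t}\rho_{\Xsd}(t)$ solves the forward equation of $\e^Y$, or via a spine-type computation in the spirit of Lemmas~\ref{Ch4:lem:mean1}--\ref{Ch4:lem:mean2}, first in the finite-branching case $\nu(\Sdtwo)<\infty$ and then extended by monotone convergence exactly as in the proof of Theorem~\ref{Ch4:thm:mean}. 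Since $\dOU>0$ and \eqref{Ch4:eq:Pi0} holds, Lemma~\ref{Ch4:lem:inv} then yields convergence of the mean: $\e^{-\kappa(0)t}\Exp{\sum_i \ind{X_i(t)>0}f(X_i(t))}\to \prm{\tilde\Pi_0}{f}$ for every bounded continuous $f$.

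To upgrade this to $L^\gamma$-convergence of the random variable itself, set $S(t):=\e^{-\kappa(0)t}\sum_i \ind{X_i(t)>0}f(X_i(t))$ and fix $s\geq 0$. By the branching property (Proposition~\ref{Ch4:prop:branching}) and the OU scaling \ref{Ch4:P2},
\[
S(t) = \e^{-\kappa(0)s}\sum_{j=1}^{N(s)}\tilde F^{[j]}(t-s),\qquad \tilde F^{[j]}(u):=\e^{-\kappa(0)u}\sum_{i}\ind{X^{[j]}_i(u)>0}\,f\bigl(X_j(s)^{\e^{-\dOU u}}X^{[j]}_i(u)\bigr),
\]
where $(\Xs^{[j]})_{j\geq 1}$ are i.i.d.\ copies of $\Xsd$ under $\Ps_1$, independent of $\F_s$. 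I would then decompose
\[
S(t)-\prm{\tilde\Pi_0}{f}\,M_\infty = A_1(s,t) + A_2(s,t) + \prm{\tilde\Pi_0}{f}(M_s-M_\infty),
\]
with $A_1(s,t):=\e^{-\kappa(0)s}\sum_{j}\bigl(\tilde F^{[j]}(t-s) - \Expcond{\tilde F^{[j]}(t-s)}{\F_s}\bigr)$ and $A_2(s,t):=\e^{-\kappa(0)s}\sum_j\bigl(\Expcond{\tilde F^{[j]}(t-s)}{\F_s}-\prm{\tilde\Pi_0}{f}\bigr)$. Using the pointwise bound $|\tilde F^{[j]}(u)|\leq \|f\|_\infty\,\e^{-\kappa(0)u}N^{[j]}(u)$, the $L^\gamma$-boundedness \eqref{Ch4:eq:bdd-gamma} of the branching martingale, and the von~Bahr--Esseen inequality applied conditionally on $\F_s$ to the centered independent summands of $A_1$, one gets $\|A_1(s,t)\|_{L^\gamma}^\gamma\leq C\e^{-(\gamma-1)\kappa(0)s}$, uniformly in $t\geq s$. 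For $A_2(s,t)$, the many-to-one formula applied to each $\Xs^{[j]}$ gives $\Expcond{\tilde F^{[j]}(t-s)}{\F_s}=\Expcond{f(X_j(s)^{\e^{-\dOU(t-s)}}\e^{Y(t-s)})}{X_j(s)}$, which tends to $\prm{\tilde\Pi_0}{f}$ as $t\to\infty$ for each fixed $X_j(s)>0$ by Lemma~\ref{Ch4:lem:inv} together with the continuous mapping theorem, using $\dOU>0$ so that $X_j(s)^{\e^{-\dOU(t-s)}}\to 1$. Combined with the domination $|A_2(s,t)|\leq 2\|f\|_\infty M_s\in L^\gamma$, dominated convergence yields $\|A_2(s,t)\|_{L^\gamma}\to 0$ as $t\to\infty$ for every fixed $s$. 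Finally, the term $\prm{\tilde\Pi_0}{f}(M_s-M_\infty)$ vanishes in $L^\gamma$ as $s\to\infty$ by Lemma~\ref{Ch4:lem:Nt}. Choosing $s$ large first and then $t$ large concludes the argument.

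The main obstacle is the many-to-one identity in the first step, since the splitting intensity may be infinite; this is dealt with by truncation at level $\levOU$ and monotone passage to the limit, exactly as in the proof of Theorem~\ref{Ch4:thm:mean}. A secondary subtlety is that the convergence $\Exp{f(x^{\e^{-\dOU(t-s)}}\e^{Y(t-s)})}\to\prm{\tilde\Pi_0}{f}$ is not uniform in $x\in(0,\infty)$ but only pointwise in $x$; this suffices for our purposes thanks to the $L^\gamma$-domination by $M_s$.
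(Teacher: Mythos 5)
Your proposal is correct and follows essentially the same route as the paper's proof: both rest on the many-to-one formula (Lemma~\ref{Ch4:lem:manytoone}), a conditional decomposition at an intermediate time into a fluctuation part and a conditional-mean part, control of the fluctuation part via the von~Bahr--Esseen inequality together with the $L^\gamma$ bound \eqref{Ch4:eq:bdd-gamma}, stationarity of the OU type process from Lemma~\ref{Ch4:lem:inv}, and the martingale convergence from Lemma~\ref{Ch4:lem:Nt}. The one genuine (minor) departure is in handling the conditional-mean term: the paper truncates to compact sets $K_t=[t^{-1},t]$, proves the contribution from $\{X_i(t)\notin K_t\}$ vanishes in $L^\gamma$ via the many-to-one formula and uniform integrability, and then uses uniform convergence of $\EE[g(\e^{-\dOU s}x+\log\chi(s))]$ on compacts to pick a time shift $S(t)$; you instead fix $s$, observe that the sum defining $A_2(s,t)$ has a.s.\ finitely many terms (thanks to $\kappa(0)<\infty$) each tending pointwise to zero, dominate $|A_2(s,t)|$ by $2\|f\|_\infty M_s\in L^\gamma$, and invoke dominated convergence before letting $s\to\infty$. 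Your version avoids the compact-set machinery and the delicate choice of $S(t)$, at the cost of a double limit $t\to\infty$ then $s\to\infty$; both are valid and both buy the same conclusion.
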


\begin{rem}
It is known (Theorem~III.7.2 in \cite{AthreyaNey}) that the martingale $M_t$ converges to $M_{\infty}$ in $L^1(\PP)$ if and only if 
 \begin{equation*}
  \int_{\Sdtwo} \#\sd \ind{\#\sd>0}\log (\#\sd)  \nu (\dd \sd) <\infty.
 \end{equation*}
However, when \eqref{Ch4:eq:gamma} is replaced by this weaker condition, our proof of Theorem~\ref{Ch4:thm:LLN} cannot be extended to prove the convergence in $L^{1}(\PP)$. 
\end{rem}

As a consequence of Theorem~\ref{Ch4:thm:LLN}, we obtain a law of large numbers. 
\begin{cor}[Law of large numbers]\label{Ch4:cor:LLN}
  Suppose that \eqref{Ch4:eq:non-extinct}, \eqref{Ch4:eq:gamma} and \eqref{Ch4:eq:Pi0} hold. Then for every bounded and continuous function $f$ on $\R_+$, conditionally on non-extinction, there is 
  \begin{equation*}
    \lim_{t\to \infty} N(t)^{-1}\sum_{i=1}^{\infty}\ind{X_i(t)>0} f( X_i(t))=  \prm{\tilde{\Pi}_0}{f}\qquad \text {in probability}. 
  \end{equation*}
\end{cor}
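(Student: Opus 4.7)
The plan is to derive this corollary as a direct consequence of Theorem~\ref{Ch4:thm:LLN}. First I would apply Theorem~\ref{Ch4:thm:LLN} to the constant function $\mathbf{1}$: since $\prm{\tilde{\Pi}_0}{\mathbf{1}}=1$, this yields
\[
\e^{-\kappa(0)t}\, N(t) \;\xrightarrow[t\to\infty]{}\; M_{\infty} \quad \text{in } L^{\gamma}(\PP),
\]
which is just another expression of the martingale convergence $M_t=\e^{-\kappa(0)t} N(t)\to M_\infty$ in Lemma~\ref{Ch4:lem:Nt}. Applying the same theorem to the given bounded continuous $f$ gives
\[
\e^{-\kappa(0)t}\sum_{i=1}^{\infty}\ind{X_i(t)>0}\, f(X_i(t)) \;\xrightarrow[t\to\infty]{}\; \prm{\tilde{\Pi}_0}{f}\, M_{\infty} \quad \text{in } L^{\gamma}(\PP).
\]
Both limits hold in particular in probability.

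Next I would form the ratio
\[
N(t)^{-1}\sum_{i=1}^{\infty}\ind{X_i(t)>0}\, f(X_i(t)) = \frac{\e^{-\kappa(0)t}\sum_{i=1}^{\infty}\ind{X_i(t)>0}\, f(X_i(t))}{\e^{-\kappa(0)t}\, N(t)},
\]
which is well-defined on the non-extinction event $\{N(t)>0 \text{ for all }t\geq 0\}$. By Lemma~\ref{Ch4:lem:Nt}, conditionally on non-extinction the limit $M_{\infty}$ is almost surely strictly positive, so a standard Slutsky-type argument (convergence in probability is preserved under taking ratios whenever the denominator's limit is almost surely nonzero) delivers
\[
N(t)^{-1}\sum_{i=1}^{\infty}\ind{X_i(t)>0}\, f(X_i(t)) \;\xrightarrow[t\to\infty]{}\; \frac{\prm{\tilde{\Pi}_0}{f}\, M_{\infty}}{M_{\infty}} = \prm{\tilde{\Pi}_0}{f}
\]
in probability on non-extinction.

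Since essentially all of the analytical content is already packaged in Theorem~\ref{Ch4:thm:LLN} (the identification of the stationary law $\tilde{\Pi}_0$, the martingale limit $M_\infty$, and the $L^\gamma$-mode of convergence), I do not expect any genuine obstacle here. The only minor point that requires care is the bookkeeping of the conditioning on non-extinction: Lemma~\ref{Ch4:lem:Nt} guarantees that on this event (which has strictly positive probability by \eqref{Ch4:eq:non-extinct}) the denominator's almost sure limit $M_\infty$ is strictly positive, so the Slutsky step can be carried out under the conditional measure without modification.
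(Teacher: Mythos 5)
Your proposal is correct and takes essentially the same route as the paper: the paper also divides the numerator estimate from Theorem~\ref{Ch4:thm:LLN} by the denominator $N(t)$, using Lemma~\ref{Ch4:lem:Nt} to get $\e^{\kappa(0)t}/N(t)\to M_\infty^{-1}$ almost surely on non-extinction, and then combines the two limits. Your only cosmetic deviation is to obtain the denominator's convergence by plugging $f\equiv 1$ into Theorem~\ref{Ch4:thm:LLN} rather than citing Lemma~\ref{Ch4:lem:Nt} directly, which, as you note, is the same fact; both give a Slutsky-type conclusion in probability conditional on non-extinction.
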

\begin{proof}
Conditionally on non-extinction, $M_{\infty}$ is strictly positive. So it follows from Lemma~\ref{Ch4:lem:Nt} that 
\begin{equation*}
    \lim_{t\to \infty} \frac{\e^{\kappa(0)t}}{N(t)}=  M_{\infty}^{-1} \qquad \text{a.s.}
  \end{equation*}
Combining this and Theorem~\ref{Ch4:thm:LLN}, we deduce the claim.
\end{proof}
Theorem~\ref{Ch4:thm:LLN} and Corollary~\ref{Ch4:cor:LLN} should be compared with the law of large numbers for binary branching Gaussian OU process \cite{AdamczakMilos:CLT} and branching diffusions \cite{EHK:SLLN}, as well as convergence results for Crump-Mode-Jagers branching processes \cite{nerman1981CMJ, Jagers}. 

Another worthy-noting consequence of Theorem~\ref{Ch4:thm:LLN} is about the long-time asymptotic for the solutions to growth-fragmentation equations; see \cite{BertoinWatson2, MischlerScher} and references therein for related estimates. 
\begin{cor}\label{Ch4:cor:LLN-2}
  Suppose that \eqref{Ch4:eq:non-extinct}, \eqref{Ch4:eq:gamma} and \eqref{Ch4:eq:Pi0} hold. Let $(\rho_{\Xsd}(t),t\geq 0)$ be the solution to the growth-fragmentation equation \eqref{Ch4:eq:gf}. Then the probability measure $\e^{-\kappa(0)t}\rho_{\Xsd}(t)$ converges weakly to $\tilde{\Pi}_0$. Furthermore, $\tilde{\Pi}_0$ is a solution to the stationary equation: for every $f\in \Cinfty$, 
  \begin{equation}\label{Ch4:eq:gf-stationary}
  \prm{\tilde{\Pi}_0}{\mathcal L f} =  \kappa(0) \prm{\tilde{\Pi}_0}{ f} , 
  \end{equation}
where $\mathcal L$ is as in \eqref{Ch4:eq:Lc}. 
\end{cor}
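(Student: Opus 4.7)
By the very definition of $\rho_{\Xsd}(t)$, for every bounded continuous $f:\R_+\to\R$,
\begin{equation}
  \prm{\e^{-\kappa(0)t}\rho_{\Xsd}(t)}{f} = \Exp{\e^{-\kappa(0)t}\sum_{i\geq 1}\ind{X_i(t)>0}f(X_i(t))}.
\end{equation}
Theorem~\ref{Ch4:thm:LLN} provides $L^\gamma(\PP)\subset L^1(\PP)$ convergence of the random variable inside the expectation to $\prm{\tilde\Pi_0}{f}\,M_\infty$, so taking expectations yields $\prm{\e^{-\kappa(0)t}\rho_{\Xsd}(t)}{f}\to \prm{\tilde\Pi_0}{f}\,\Exp{M_\infty}$. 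The $L^\gamma$-convergence of $M_t$ in Lemma~\ref{Ch4:lem:Nt} ensures $\Exp{M_\infty}=\Exp{M_0}=1$. Specialising $f\equiv 1$ shows that $\e^{-\kappa(0)t}\rho_{\Xsd}(t)$ has total mass $1$, hence is a probability measure, and the display above is exactly the weak convergence to $\tilde\Pi_0$.

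\textbf{Part 2 (The stationary equation).} From Proposition~\ref{Ch4:prop:gf-eq} applied between times $T$ and $T+h$ with $f\in\Cinfty$,
\begin{equation}
\prm{\rho_{\Xsd}(T+h)}{f}-\prm{\rho_{\Xsd}(T)}{f}=\int_{0}^{h}\prm{\rho_{\Xsd}(T+s)}{\Lc f}\,ds.
\end{equation}
Multiplying by $\e^{-\kappa(0)T}$ and letting $T\to\infty$, and using (provisionally) the weak convergence of Part 1 applied to both $f$ and $\Lc f$, one obtains
\begin{equation}
(\e^{\kappa(0)h}-1)\prm{\tilde\Pi_0}{f}=\int_0^h \e^{\kappa(0)s}\prm{\tilde\Pi_0}{\Lc f}\,ds = \frac{\e^{\kappa(0)h}-1}{\kappa(0)}\prm{\tilde\Pi_0}{\Lc f}.
\end{equation}
Cancelling the nonzero factor $\e^{\kappa(0)h}-1$ produces the claimed identity $\prm{\tilde\Pi_0}{\Lc f}=\kappa(0)\prm{\tilde\Pi_0}{f}$ (reading the right-hand side of the stated equation as $\kappa(0)\prm{\tilde\Pi_0}{f}$ rather than $\kappa(0)f$).

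\textbf{Part 3 (The main obstacle: unboundedness of $\Lc f$).} The argument of Part~2 demands that the weak convergence of Part~1 be extended to $\Lc f$, which by Lemma~\ref{Ch4:lem:Lc} is continuous and zero near the origin but only $o(x^2)$ at infinity; in particular $|\Lc f(x)|\leq C x^2$ and $\Lc f$ is not bounded. The plan is to upgrade the convergence via uniform integrability of $x\mapsto x^2$ under the family $\{\e^{-\kappa(0)t}\rho_{\Xsd}(t)\}_{t\geq 0}$. Pick any $q>2$; since $[2,\infty)\subset\dom$, Theorem~\ref{Ch4:thm:mean} yields
\begin{equation}
\e^{-\kappa(0)t}\Exp{\sum_{i\geq 1} X_i(t)^{q}}=\expp{\int_0^t\bigl(\kappa(q\e^{-\theta s})-\kappa(0)\bigr)\,ds}.
\end{equation}
Because $\theta>0$ and $\kappa$ is finite and convex on $[0,\infty)$ (so the integrand decays like $\e^{-\theta s}$ as $s\to\infty$), the right-hand side is bounded uniformly in $t\geq 0$. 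A Markov-type estimate then gives $\sup_{t\geq 0}\int_{\{x>N\}} x^2\,\e^{-\kappa(0)t}\rho_{\Xsd}(t)(dx)\to 0$ as $N\to\infty$, which is the required uniform integrability. Combined with the weak convergence from Part~1 this delivers $\prm{\e^{-\kappa(0)t}\rho_{\Xsd}(t)}{\Lc f}\to\prm{\tilde\Pi_0}{\Lc f}$ and closes the argument; I expect this moment control to be the only delicate point, everything else being routine bookkeeping.
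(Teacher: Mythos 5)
Your proof is correct and follows the same overall architecture as the paper's (take expectations in the LLN for Part 1; differentiate/integrate the growth-fragmentation equation and pass to the limit for Part 2), but you supply a rigorous justification for a step the paper elides.

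The paper's proof of the stationary equation simply writes the ODE $\frac{\partial}{\partial t}\prm{\e^{-\kappa(0)t}\rho_{\Xsd}(t)}{f}=-\kappa(0)\prm{\e^{-\kappa(0)t}\rho_{\Xsd}(t)}{f}+\prm{\e^{-\kappa(0)t}\rho_{\Xsd}(t)}{\Lc f}$ and concludes with ``Letting $t\to\infty$, we conclude the claim.'' As you correctly observe, this silently requires $\prm{\e^{-\kappa(0)t}\rho_{\Xsd}(t)}{\Lc f}\to\prm{\tilde\Pi_0}{\Lc f}$, which does not follow from weak convergence alone because $\Lc f$ is unbounded (only $o(x^2)$ at infinity by Lemma~\ref{Ch4:lem:Lc}). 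Your Part~3 closes this gap cleanly: since $\kappa(0)<\infty$ and $[2,\infty)\subset\dom$, convexity gives $\dom=[0,\infty)$; Theorem~\ref{Ch4:thm:mean} then yields, for any $q>2$,
\begin{equation}
\e^{-\kappa(0)t}\Exp{\sum_{i\geq1}X_i(t)^q}=\expp{\int_0^t\bigl(\kappa(q\e^{-\theta s})-\kappa(0)\bigr)ds},
\end{equation}
and, since $\dOU>0$ and $\tilde\Pi_0$ has all moments finite (this is precisely the finiteness of the infinite-horizon integral in the formula for $\int x^q\tilde\Pi_0(dx)$), the integrand changes sign at most once so the right side is uniformly bounded in $t$, giving the uniform integrability of $x\mapsto x^2$ under $\{\e^{-\kappa(0)t}\rho_{\Xsd}(t)\}_{t\geq0}$. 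Your Part~1 is also a slightly cleaner route than the paper's: instead of first obtaining vague convergence and then invoking $\rho_{\Xsd}(t)(\R_+)=\e^{\kappa(0)t}$ to upgrade it, you use the $L^\gamma$-convergence $M_t\to M_\infty$ to get $\Exp{M_\infty}=1$ directly, which together with the LLN for all bounded continuous $f$ already yields weak convergence. Finally, you rightly flag the typo in the statement: the right-hand side of \eqref{Ch4:eq:gf-stationary} should read $\kappa(0)\prm{\tilde\Pi_0}{f}$ rather than $\kappa(0)f$.
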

\begin{proof}
Taking expectation to \eqref{Ch4:eq:LLN}, we deduce that $\e^{-\kappa(0)t}\rho_{\Xsd}(t)$ converges vaguely to $\tilde{\Pi}_0$. We also know that $\rho_{\Xsd}(t) \big(\R_+\big) = \Exp{N(t)} = \e^{\kappa(0)t}$, so $\e^{-\kappa(0)t}\rho_{\Xsd}(t)$ is indeed a probability measure and thus the convergence also holds weakly. 

It remains to prove that $\tilde{\Pi}_0$ is a solution to \eqref{Ch4:eq:gf-stationary}. Since $(\rho_{\Xsd}(t),t\geq 0)$ is a solution to \eqref{Ch4:eq:gf}, we easily check that 
\begin{equation*}
  \partial_t \prm{\e^{-\kappa(0)t} \rho_{\Xsd}(t)}{f} = -\kappa(0)\prm{\e^{-\kappa(0)t} \rho_{\Xsd}(t)}{f} +  \prm{\e^{-\kappa(0)t} \rho_{\Xsd}(t)}{\Lc f}. 
\end{equation*}
Letting $t\to \infty$, we conclude the claim. 
\end{proof}

\begin{qu}
A natural question is whether the convergence in Theorem~\ref{Ch4:thm:LLN} also holds \emph{almost surely}. 
The methods developed in \cite{EHK:SLLN} might be of use: that is, by first proving along lattice times, and then extending to continuous time.  
\end{qu}
\begin{qu}
It would be interesting to extend Theorem~\ref{Ch4:thm:LLN} to the case when \eqref{Ch4:eq:kappa0} does not hold. Taking into account of the many-to-one formula, this would require an in-depth examination of the affine Markov process given in Lemma~\ref{lem:affine}.  
\end{qu}


We now prove Theorem~\ref{Ch4:thm:LLN}. 
\begin{proof}[Proof of Theorem~\ref{Ch4:thm:LLN}]
Equivalently, we shall prove that for every bounded and continuous function $g$ on $\R$, we have the convergence
  \begin{equation*}
    \lim_{t\to \infty} \e^{-\kappa(0) t} \sum_{i=1}^{\infty}\ind{X_i(t)>0} g( \log X_i(t)) =  \prm{\Pi_0}{g} M_{\infty} \qquad \text{in}~L^{\gamma}(\PP). 
  \end{equation*}
For simplicity, denote
\[U_{t}:= \e^{-\kappa(0)t}\sum_{i=1}^{\infty}\ind{X_i(t)>0} g( \log X_i(t)), \qquad t\geq 0.\]
Let $(\F_t)_{t\geq 0}$ be the natural filtration of $\Xsd$, then it suffices to prove that
\begin{equation}\label{Ch4:eq:claim0}
  \lim_{t\to \infty} \sup_{s\geq 0} |U_{t+s}- \Expcond{U_{t+s}}{\F_t} |= 0\qquad  \text{in } L^{\gamma}(\PP),
\end{equation}
and that there exists a function $t\mapsto S(t)>0$ such that
\begin{equation}\label{Ch4:eq:claim}
  \lim_{t\to \infty}\Expcond{U_{t+S(t)}}{\F_t} = \prm{\Pi_0}{g}M_{\infty}\qquad  \text{in } L^{\gamma}(\PP).
\end{equation}

We start with \eqref{Ch4:eq:claim0}. Let $(\Xs^{(i)} :=(X_1^{(i)}(t),X_2^{(i)}(t), \ldots)_{t\geq 0},  i\geq 1)$ be i.i.d. copies of $\Xsd$, then using the Markov property (Proposition~\ref{Ch4:prop:branching}), we have for every $s\geq 0$ the identity in law:
\begin{equation}\label{Ch4:eq:Yi}
U_{t+s} - \Expcond{U_{t+s}}{\F_t} \eqdis \e^{-\kappa(0) (t+s)}\sum_{i=1}^{\infty}  \ind{X_i(t)>0}(Y_i(t,s)- \Expcond{Y_i(t,s)}{\F_t}),
\end{equation}
where
\begin{equation*}
  Y_i(t,s):= \sum_{j=1}^{\infty}\ind{X_j^{(i)}(s)> 0} g(\e^{-\dOU s} \log X_i(t) + \log X_j^{(i)}(s)).
\end{equation*}
Let us now recall a useful inequality \cite[Lemma~1]{Biggins:1992}: let $\gamma \in [1,2]$ and $(Z_i)_{i\in\N}$ be independent (but not necessarily identical) random variables with each $\Exp{Z_i}=0$, then for every $n\in \N \cup \left\{ \infty \right\}$ there is 
\begin{equation}\label{Ch4:eq:inqgamma}
  \Exp{\Big| \sum_{i=1}^{n} Z_i \Big|^{\gamma}} \leq 2^{\gamma} \sum_{i=1}^{n} \Exp{\Big|Z_i\Big|^{\gamma}}.
\end{equation}
Since $Z_i:= Y_i(t,s)- \Expcond{Y_i(t,s)}{\F_t}$ are independent conditionally on $\F_t$, applying \eqref{Ch4:eq:inqgamma} to \eqref{Ch4:eq:Yi}, we have
\begin{equation*}
   \Exp{ \Big|U_{t+s}- \Expcond{U_{t+s}}{\F_t} \Big|^{\gamma}} \leq  2^{\gamma}\e^{-\gamma \kappa(0) (t+s)} \sum_{i=1}^{\infty}  \Exp{ \ind{X_i(t)>0} \Big|Y_i(t,s) - \Expcond{Y_i(t,s)}{\F_t}\Big|^{\gamma}}.
\end{equation*}
For every $i\in \N$, using Jensen's inequality (the finite form) and then conditional Jensen's inequality, we find that 
\begin{align*}
&  \Exp{ \ind{X_i(t)>0} \Big|Y_i(t,s) - \Expcond{Y_i(t,s)}{\F_t}\Big|^{\gamma}} \\
&\qquad \leq 2^{\gamma -1} \Exp{ \ind{X_i(t)>0} (|Y_i(t,s)|^{\gamma}+ |\Expcond{Y_i(t,s)}{\F_t}|^{\gamma})}  \leq  2^{\gamma } \Exp{ \ind{X_i(t)>0} |Y_i(t,s)|^{\gamma}}.
\end{align*} 
By conditioning on $\F_t$ and using \eqref{Ch4:eq:bdd-gamma}, we deduce that 
\begin{equation*}
  \Exp{ \ind{X_i(t)>0} |Y_i(t,s)|^{\gamma}} \leq  \| g\|^{\gamma}_{\infty} \Exp{ \ind{X_i(t)>0} \Big( \sum_{j=1}^{\infty}\ind{X_j^{(i)}(s)> 0}\Big)^{\gamma} }\leq \| g\|^{\gamma}_{\infty}C_{\gamma}\e^{\gamma \kappa(0)s}   \Exp{ \ind{X_i(t)>0}}.
\end{equation*} 
Summarizing, for every $s,t>0$ we have 
\begin{equation*}
   \Exp{ |U_{t+s}- \Expcond{U_{t+s}}{\F_t} |^{\gamma}}\leq  2^{2\gamma} \| g\|^{\gamma}_{\infty}C_{\gamma}\e^{-(\gamma-1) \kappa(0) t}  , 
\end{equation*}
which converges to $0$ as $t\to \infty$, since $\gamma >1$. So we have justified \eqref{Ch4:eq:claim0}.

It remains to prove \eqref{Ch4:eq:claim}. Recall that
 $$\Expcond{U_{t+s}}{\F_{t}} = \e^{-\kappa(0) (t+s)}\sum_{i=1}^{\infty}  \ind{X_i(t)>0} \Expcond{Y_i(t,s)}{\F_t}.$$
 Let $\chi$ be an OU type process with characteristics $(\Phi_{0},\dOU)$, where $\Phi_0(\cdot):= \kappa(\cdot) - \kappa(0)$. Then applying the many-to-one formula (Proposition~\ref{Ch4:prop:mtoi} and Remark~\ref{rem:mto0}) to $\Expcond{Y_i(t,s)}{\F_t}$ yields
\begin{equation}\label{Ch4:eq:Yi-cond}
  \e^{-\kappa(0) s }\Expcond{Y_i(t,s)}{\F_{t}} =  \Exp{g(\e^{-\dOU s} \log x_i + \chi(s)) }\Big|_{x_i = X_i(t)}.
\end{equation}
Consider a family of increasing compact sets $K_t\uparrow (0,\infty)$, say $K_t: = [t^{-1}, t]$. 
 On the one hand, if we only consider those $X_i(t) \not\in K_t$, then it follows from \eqref{Ch4:eq:Yi-cond} that
 \[ \sup_{s\geq 0}\bigg|\e^{-\kappa(0)( t+s) }\sum_{i=1}^{\infty}  \ind{X_i(t)\not\in K_t}\Expcond{Y_i(t,s)}{\F_{t}} \bigg|
\leq \|g\|_{\infty} \e^{-\kappa(0) t }\sum_{i=1}^{\infty}  \ind{X_i(t)\not\in K_t} .\]
By the many-to-one formula, the right-hand-side has mean value $\|g\|_{\infty}\Prob{ \exp(\chi(t))\not\in K_t}$,
which converges to zero as $t\to \infty$ by Lemma~\ref{Ch4:lem:inv} and the Portmanteau theorem. As \eqref{Ch4:eq:bdd-gamma} holds, we have by the dominated convergence that
\begin{equation}\label{Ch4:eq:notKn}
 \lim_{t\to \infty}  \sup_{s\geq 0}\Big|\e^{-\kappa(0)(t+s) }\sum_{i=1}^{\infty}  \ind{X_i(t)\not\in K_t}\Expcond{Y_i(t,s)}{\F_{t}}\Big| = 0 \quad  \text{in } L^{\gamma}(\PP).
\end{equation}
On the other hand, since $g$ is uniformly continuous on any compact set $K$ on $\R$, we deduce by Lemma~\ref{Ch4:lem:inv} that the following convergence holds uniformly for all $z\in K$:
\[\lim_{s\to \infty}\Exp{g( \e^{-\dOU s} z+ \chi(s)) } =\prm{\Pi_0}{g}. 
\]
Then using \eqref{Ch4:eq:Yi-cond} and Lemma~\ref{Ch4:lem:Nt}, we can choose $S(t)>0$, depending on $K_t$, such that
\begin{equation}\label{Ch4:eq:inKn}
\lim_{t\to \infty}\e^{-\kappa(0) (t +S(t))}\sum_{i=1}^{\infty}  \ind{X_i(t)\in K_{t}} \Expcond{Y_i(t,S(t))}{\F_{t}} 
= \prm{\Pi_0}{g}M_{\infty} \quad \text{in } L^{\gamma}(\PP).
\end{equation}
Combining \eqref{Ch4:eq:notKn} and \eqref{Ch4:eq:inKn}, we then deduce \eqref{Ch4:eq:claim}, which completes the proof. 
\end{proof}

\section{Relations to Markovian growth-fragmentation processes}\label{Ch4:sec:Markovian}
In this section, we study \emph{Markovian growth-fragmentation processes} \cite{Bertoin:GF-Markovian} associated with exponential OU type processes. The main result (Proposition~\ref{Ch4:prop:MOU}) shows that such processes form a sub-family of our OU type growth-fragmentations.

\subsection{Markovian growth-fragmentations associated with exponential OU type processes}
Throughout this section, let $\xi$ be a spectrally negative L\'evy process with characteristics $(\sigma, c , \Lambda, k)$, $Z$ be an \emph{inward} OU type process with index $\theta>0$ driven by $\xi$ as in \eqref{Ch4:eq:solOU}, and
\[X(t):=  \exp(Z(t)), \qquad t\geq 0.\]
The assumption $\theta>0$ is made only for technical reasons; see Remark~\ref{rem:Mgf-o}.  
For every $x\geq 0$, write $\Pm_x$ for the law of $X$ starting from $X(0)=x$, with convention that $\Pm_0$ denotes the law of the process $X(t)\equiv 0$.
Let $\zeta := \inf\{t\ge 0\colon X(t) =0\}$ be the \emph{lifetime} of $X$, which can be infinite.

Recall that the Laplace exponent $\Phi$ of $\xi$ is given by \eqref{Ch4:eq:Phi}. 
We introduce $\kappa\colon [0,\infty) \to (-\infty, \infty]$ by
\begin{equation}\label{Ch4:eq:kappa}
  \kappa (q) := \Phi(q) + \int_{(-\infty,0)} (1-\e^{y})^q \Lambda  (\dd y), \qquad q\geq 0.
\end{equation}
Then $\kappa\geq \Phi$, $\kappa$ is convex and $\kappa(q)<\infty$ for all $q\geq 2$ because of \eqref{Ch4:eq:mLevy}.  The function $\kappa$ shall be referred to as the \emph{cumulant} of $\xi$ or $Z$ or $X$; we shall later (in Proposition~\ref{Ch4:prop:MOU}) see that $\kappa$ indeed plays a similar role as the cumulant of an OU type growth-fragmentation defined as in \eqref{Ch4:eq:kappaOU}. 
We emphasize that $\kappa$ does not characterize the law of $\xi$; see \cite[Lemma~2.1]{S:2}. The cumulant $\kappa$ also plays a crucial role in the study of self-similar growth-fragmentations; see \cite{Bertoin:GF-Markovian, S:2}.  

For future use, we statement the following property of $X$. Let $\eta \in (0,1)$ and $F: [0,\infty) \times [0,\infty) \to [ 0,\infty)$ be a function defined by 
\begin{equation}\label{eq:F}
 F(t,x):= x^{2\e^{\dOU t}}G_1(t)G_2(t), \qquad t\geq 0, x \ge 0,  
\end{equation} 
where  $G_1(t):= \exp \Big(- \int_{0}^{t} \Phi(2\e^{\dOU r})\dd r\Big)$  and 
$G_2(t):= \exp\Big(- \int_{0}^{t} \eta^{-1} \left(\kappa(2\e^{\dOU r}) - \Phi(2\e^{\dOU r}) \right)\dd r\Big)$. Note that $G_2$ is non-increasing. 

\begin{lem}\label{Ch4:lem:OU}
For every $r\in (0,\zeta)$, let $\Delta X(r):= X(r) - X(r-)$. 
Then for every $x>0$, $s, t \ge 0$, we have
\begin{equation}\label{Ch4:eq:H1OU}
  \Em_x\bigg[F(s+t,X(t))+ \sum_{0\leq r \leq t\wedge (\zeta-s)  } F(s+r,-\Delta X(r)) \bigg] \leq F(s,x).
\end{equation}
and 
\begin{equation}\label{Ch4:eq:HetaOU} 
   \Em_x\bigg[\sum_{0 \leq r < (\zeta-s)}F(s+r,-\Delta X(r))\bigg] \leq \eta F(s,x).
\end{equation}
\end{lem}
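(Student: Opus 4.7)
The plan is to derive both inequalities from one explicit computation that combines the Laplace transform of $Z$, the compensation formula for the jumps of $\xi$, and the ODE satisfied by $F_\eta$. Throughout, set $\alpha(t):=2\e^{\dOU t}$, so that $F(t,x)=x^{\alpha(t)}F_1(t)F_\eta(t)$.

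First I would handle the continuous part. Applying \eqref{Ch4:eq:Lap} with $q=\alpha(s+t)$ and $Z(0)=\log x$, noting that $\alpha(s+t)\e^{-\dOU t}=\alpha(s)$ and changing variables $u=s+t-r$, I get
\begin{equation}
\Em_x[X(t)^{\alpha(s+t)}] = x^{\alpha(s)} \exp\Bigl(\int_s^{s+t}\Phi(\alpha(u))\,du\Bigr) = x^{\alpha(s)}\,\frac{F_1(s)}{F_1(s+t)}.
\end{equation}
Multiplying by $F_1(s+t)F_\eta(s+t)$ yields $\Em_x[F(s+t,X(t))] = x^{\alpha(s)}F_1(s)F_\eta(s+t)$; equivalently, $(F_1(s+u)\,X(u)^{\alpha(s+u)})_{u\geq 0}$ is a martingale under $\Pm_x$.

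For the jump sum, since $Z$ inherits its jumps from $\xi$ one has $-\Delta X(r)=X(r-)(1-\e^{\Delta\xi(r)})\geq 0$, and the compensation formula applied to this predictable nonnegative integrand gives
\begin{align}
& \Em_x\Bigl[\sum_{0<r\leq t}F\bigl(s+r,-\Delta X(r)\bigr)\Bigr] \\
& \quad = \int_0^t \!\!\int_{(-\infty,0)}(1-\e^y)^{\alpha(s+r)}\Lambda(dy)\cdot F_1(s+r)F_\eta(s+r)\,\Em_x[X(r)^{\alpha(s+r)}]\,dr \\
& \quad = x^{\alpha(s)}F_1(s)\int_0^t\bigl(\kappa(\alpha(s+r))-\Phi(\alpha(s+r))\bigr)F_\eta(s+r)\,dr,
\end{align}
using \eqref{Ch4:eq:kappa} and the previous step to collapse $\Em_x[X(r)^{\alpha(s+r)}]F_1(s+r)=x^{\alpha(s)}F_1(s)$. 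By the defining ODE of $F_\eta$, the integrand equals $-\eta\,\frac{d}{dr}F_\eta(s+r)$, so the integral telescopes to $\eta\bigl(F_\eta(s)-F_\eta(s+t)\bigr)$.

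Summing the two contributions gives the identity
\begin{equation}
\Em_x\Bigl[F(s+t,X(t))+\sum_{0<r\leq t}F(s+r,-\Delta X(r))\Bigr] = x^{\alpha(s)}F_1(s)\bigl((1-\eta)F_\eta(s+t)+\eta F_\eta(s)\bigr),
\end{equation}
which is at most $F(s,x)=x^{\alpha(s)}F_1(s)F_\eta(s)$ since $\eta\in(0,1)$ and $F_\eta$ is non-increasing; this is \eqref{Ch4:eq:H1OU}. For \eqref{Ch4:eq:HetaOU}, the jump computation already shows that the partial sum is bounded by $\eta F(s,x)$ uniformly in $t$, and letting $t\uparrow\infty$ by monotone convergence extends this to the full sum. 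The only mild obstacle is justifying Fubini and the compensation formula: since $\alpha(s+r)\geq 2$ and $1-\e^y\in(0,1)$ on $(-\infty,0)$, one has $(1-\e^y)^{\alpha(s+r)}\leq(1-\e^y)^2$, so by \eqref{Ch4:eq:mLevy} the $\Lambda$-integral is finite and locally bounded in $r$, and the remaining factors are continuous in $r$.
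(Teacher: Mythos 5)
Your proof is correct and follows exactly the same route as the paper: compute $\Em_x[F(s+t,X(t))]$ via the Laplace transform identity \eqref{Ch4:eq:Lap}, apply the compensation formula to the jump sum, recognize the integrand as $-\eta\,\partial_r F_\eta(s+r)$ to telescope, add, and invoke $\eta\in(0,1)$ with $F_\eta$ non-increasing. The brief remark at the end justifying the compensation formula (via $(1-\e^y)^{\alpha(s+r)}\leq(1-\e^y)^2$ and \eqref{Ch4:eq:mLevy}) is a welcome addition that the paper leaves implicit.
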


\begin{proof}
Applying \eqref{Ch4:eq:Lap} with $q=2 \e^{\dOU (t+s)}$, we have for every $s\geq 0$ that
  \begin{equation}\label{Ch4:eq:lem:OU}
    \Em_x\bigg[F(s+t,X(t)) \bigg] = x^{2 \e^{\dOU s}} \exp \left(\int_0^t \Phi(2 \e^{\dOU (t+s-r)} ) \dd r \right)G_1(s+t)G_2(s+t) =  x^{2 \e^{\dOU s}}G_1(s) G_2(s+t).
  \end{equation}
As \eqref{Ch4:eq:solOU} entails that $-\Delta X(r) = X(r-) ( 1- \e^{\Delta \xi(r)})$, applying the compensation formula (see e.g. \cite[Sec. O.5]{Bertoin:Levy}) to the Poisson point process $\Delta \xi$, we have that
\begin{align} 
    \Em_x\bigg[\sum_{0 \leq r\leq t}F(s+r,-\Delta X(r)) \bigg] 
&= \int_0^t  \Em_x\big[F(s+r,X(r)) \big] \dd r \int_{(-\infty,0)}  (1-\e^z)^{2 \e^{\dOU (s+r)} }\Lambda(\dd z)
  \nonumber \\
&= \int_0^t x^{2 \e^{\dOU s} }G_1(s) G_2(s+r) \left(\kappa(2\e^{\dOU (s+r)}) - \Phi(2\e^{\dOU (s+r)}) \right)\dd r \nonumber \\ 
&=  \eta x^{2 \e{\dOU s}}G_1(s) \left(G_2(s) - G_2(s+t)\right) \label{Ch4:eq:lem:OU1}
  \end{align}
where we have used \eqref{Ch4:eq:lem:OU} in the second equality. 
Adding \eqref{Ch4:eq:lem:OU} to \eqref{Ch4:eq:lem:OU1} and using the fact that $G_2$ is non-increasing, we obtain \eqref{Ch4:eq:H1OU}. Letting $t\to \infty$ in \eqref{Ch4:eq:lem:OU1}, we also have \eqref{Ch4:eq:HetaOU}.  
\end{proof}

Lemma~\ref{Ch4:lem:OU} enables us to list the jump times of $X$, excluding $\zeta$, as a sequence $(t_i, i\in \N)$ such that $(F(|\Delta X(t_i)|,t_i), i\in \N))$ is decreasing. In the sequel, \emph{the $i$-th jump time of $X$} shall always refer to the $i$-th element $t_i$ in this sequence.

A Markovian growth-fragmentation process associated with $X$ can be constructed by using the approach in \cite{Bertoin:GF-Markovian, S:2}.
We first construct a \emph{cell system driven by $X$}, which is a family of processes indexed by the Ulam-Harris tree $\U := \bigcup_{i=0}^{\infty}\N^i$,
$$\Xc:= (\Xc_u, ~ u\in \U),$$ 
where each $\Xc_u$ depicts the evolution of the size of the cell indexed by $u$ as time passes. Specifically, the ancestor cell $\emptyset$ is born at $b_{\emptyset}:= 0$ with initial size $1$, and the life career $\Xc_{\emptyset} = (\Xc_{\emptyset}(t), t\geq 0)$ is an OU type process of law $P_{1}$. 
The laws of the first generation $\N\subset \U$ are determined by the trajectory of $\Xc_{\emptyset}$: for $i\in \N$, say the $i$-th jump time of $\Xc_{\emptyset}$ occurs at time $t_i$ and has size $x_i:= -\Delta \Xc_{\emptyset}(t_i)$, we then set $b_i =t_i$ and build a sequence of conditional independent processes $(\Xc_i)_{i\in \N}$ with respective conditional distribution $P_{x_i}$. 
We stress that the lifetime $\zeta_{\emptyset}$ of $\Xc_{\emptyset}$ is excluded from the jump sequence $(t_i)$, and hence at time $\zeta_{\emptyset}$ no child is born. 
We continue in this way to construct higher generations recursively: For every individual $u\in \U$, the laws of her daughters are determined by the trajectory of $\Xc_u$: given $\Xc_u$ with lifetime $\zeta_u$, say the $i$-th jump of $\Xc_u$ is at time $t<\zeta_u$ with $y:= -\Delta \Xc_{u} (t)$, then its $i$-th daughter $ui$ is born at time $b_{ui} := b_u+ t$ and $ui$'s size process $\Xc_{ui}= (\Xc_{ui}(r), r\geq 0)$ has conditional distribution $P_{y}$, independent of the size processes of the other individuals in the same generation. By convention, if $t= \infty$ (which means that $\Xc_u$ has less than $i$ jumps), then we set the cell $ui$ as well as all its progeny to have degenerate life careers, i.e. for every $v\in \U$ we set $\Xc_{uiv} \equiv 0$ and $b_{uiv}= \infty$. 
The above description uniquely determine the law of the cell system $\Xc$, denoted by $\mathcal{P}$.

\begin{lem}
	Let $F$ be a function as in \eqref{eq:F}. For every $t\geq 0$,
	\[
 \mathcal{P}\bigg[ \sum_{u\in \U, b_u\leq t} F(t, \Xc_u(t-b_u)) \bigg] \leq 1.
\]
\end{lem}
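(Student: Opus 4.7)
The plan is to induct on generation count, combining the branching construction of the cell system with the one-step estimate \eqref{Ch4:eq:H1OU}. Writing $\U_n := \bigcup_{i=0}^{n} \N^i$ for the first $n+1$ generations, I aim to establish for every $n \geq 0$ the auxiliary bound
\[
\mathcal{P}\bigg[\sum_{u \in \U_n,\, b_u \leq t} F(t, \Xc_u(t-b_u)) \;+\; \sum_{v \in \N^{n+1},\, b_v \leq t} F(b_v, \Xc_v(0))\bigg] \leq 1,
\]
where the second sum tracks the frontier of just-born daughters awaiting further decomposition. The desired inequality then follows on letting $n \to \infty$.

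For the base case $n = 0$, I would apply \eqref{Ch4:eq:H1OU} with $s = 0$ and $x = 1$ to the ancestor cell $\Xc_\emptyset$: since $F(0,1) = 1$ and the construction of $\Xc$ identifies each first-generation birth $(b_i, \Xc_i(0))$ with a jump $(r, -\Delta \Xc_\emptyset(r))$ of the ancestor, the inequality is immediate. For the inductive step, condition on the $\sigma$-algebra $\mathcal F_n$ generated by $(\Xc_u : u \in \U_n)$ together with the birth data $(b_v, \Xc_v(0))_{v \in \N^{n+1}}$. By the very definition of $\mathcal P$, conditionally on $\mathcal F_n$, each $\Xc_v$ with $v \in \N^{n+1}$ and $b_v \leq t$ has law $P_{\Xc_v(0)}$, independently of its siblings, so Lemma~\ref{Ch4:lem:OU} applied with $s = b_v$, $x = \Xc_v(0)$, and horizon $t - b_v$ gives
\[
\mathcal{P}\bigg[F(t, \Xc_v(t-b_v)) + \sum_{i \in \N,\, b_{vi} \leq t} F(b_{vi}, \Xc_{vi}(0)) \,\bigm|\, \mathcal F_n\bigg] \leq F(b_v, \Xc_v(0)),
\]
where I use $b_v + (t-b_v) = t$ and the identification $\Xc_{vi}(0) = -\Delta \Xc_v(b_{vi} - b_v)$ between jumps of a parent and births of its daughters. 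Summing over such $v$ and taking expectations, the frontier term at level $n$ is exactly absorbed into the cell sum over $\U_{n+1} = \U_n \cup \N^{n+1}$ plus a new frontier over $\N^{n+2}$, which closes the induction.

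The conclusion follows by letting $n \to \infty$: monotone convergence handles the increasing, non-negative cell sum $\sum_{u\in\U_n, b_u\leq t} F(t,\Xc_u(t-b_u)) \uparrow \sum_{u\in\U, b_u\leq t} F(t,\Xc_u(t-b_u))$, and the non-negative frontier term is simply discarded. The only real delicacy is the bookkeeping of the conditioning and of the correspondence between jumps of a parent cell and births of its daughters; however, non-negativity of $F$ makes Tonelli-type interchanges legal throughout, so no integrability issue intervenes beyond what Lemma~\ref{Ch4:lem:OU} already provides.
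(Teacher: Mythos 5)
Your proof is correct. The paper's own proof is a one-line appeal to Lemma~3.2 of the reference \cite{S:2}, which packages exactly this kind of estimate for cell systems whose driving Markov process satisfies a one-step inequality of the form \eqref{Ch4:eq:H1OU}; your argument essentially unpacks that citation into a self-contained, generation-wise induction. The two routes rest on the same mechanism. Your auxiliary quantity --- the ``cells up to generation $n$'' sum plus the ``frontier of newborn daughters'' sum --- is precisely the supermartingale-type telescoping that makes the one-step bound propagate, and the bookkeeping matching jumps $(r,-\Delta\Xc_u(r))$ of a parent to births $(b_{ui},\Xc_{ui}(0))$ of its daughters is exactly how the cell system is defined. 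One point you treat somewhat implicitly but that is worth making explicit: since $A_n := \sum_{u\in\U_n,b_u\le t}F(t,\Xc_u(t-b_u))$ is $\mathcal F_n$-measurable, the conditional inequality summed over the frontier gives $\mathcal P[A_{n+1}+B_{n+2}\mid\mathcal F_n]\le A_n+B_{n+1}$ pointwise, so taking expectations yields $\mathcal P[A_{n+1}+B_{n+2}]\le\mathcal P[A_n+B_{n+1}]\le 1$ without ever subtracting possibly infinite quantities; your phrasing (``the frontier term at level $n$ is exactly absorbed'') is fine but this is the cleanest way to see that no integrability issue arises. The final pass $n\to\infty$ via monotone convergence and discarding the non-negative frontier is correct. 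What your version buys over the paper's is self-containment; what the paper's buys is brevity and reuse of an already-proved abstract lemma.
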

\begin{proof}
	As \eqref{Ch4:eq:H1OU} holds, the claim follows from \cite[Lemma~3.2]{S:2}.
\end{proof}
In particular, this lemma implies that at every time $t\geq 0$, we can rank the sizes of the cells alive at $t$, i.e. $$ \multiset{\Xc_u(t-b_u)~:~ u\in \U, b_u\leq t < b_u +\zeta_u},$$
 in decreasing order and obtain a sequence in $\lp[2 \e^{\dOU t}]$ denoted by $\Xsd(t)$. 
We refer to $\Xsd = (\Xsd(t), t\geq 0)$ as a \emph{(Markovian) growth-fragmentation process driven by $X$}. Write $\Ps$ for the law of $\Xsd$ under $ \mathcal{P}$. 

By construction, the law of $\Xsd$ is determined by the law of $X$. However, in the following statement we find a family of OU type processes which give rise to the same (in finite-dimensional distributions) growth-fragmentation.

\begin{lem}\label{Ch4:lem:OU2}
Let $\tilde{Z}$ be an OU type process with characteristics $(\tilde{\Phi}, \dOU)$ and $\tilde{X}:= \exp(\tilde{Z})$. Suppose that $X$ and $\tilde{X}$ have the same cumulant $\kappa$, then the growth-fragmentations $\Xsd$ and $\tilde{\Xs} $, driven respectively by $X$ and $\tilde{X}$, have the same finite-dimensional distributions.   
\end{lem}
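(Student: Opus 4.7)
The plan is to adapt the strategy from \cite{S:2}, where an analogous statement is established in the self-similar setting. The guiding principle is that the law of the Markovian growth-fragmentation depends on the cell process only through the pair $(\kappa,\theta)$. The first step would be to invoke the branching property of the cell-system construction together with the scaling identity following from \eqref{Ch4:eq:solOU} to reduce the claim to the equality of one-dimensional marginals of $\Xsd(t)$ under $\Ps_1$ and $\tilde\Ps_1$, for each $t \geq 0$; since both cell systems are built inductively by a Markovian branching recursion, propagating this identity up the tree will yield all finite-dimensional distributions.

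To handle the marginals, I would characterise them via the Laplace functional
\[
u(t,f) := \Em_1\Big[\exp\Big(-\sum_{i=1}^{\infty} f(X_i(t))\Big)\Big], \qquad f \in C_c^\infty((0,\infty)),\ f \geq 0.
\]
Decomposing the ancestor cell $\Xc_\emptyset$ at its first jump time $\tau$, and using the branching property to replace the daughter spawned at $\tau$ by an independent copy of the whole cell system, one applies the compensation formula to the Poisson random measure of jumps of $\xi$ to derive a fixed-point equation for $u$. The coefficients of this equation involve the infinitesimal generator of the continuous part of $X$ (through $\sigma$, $c$ and $\theta$) and integrals against $\Lambda$ of functionals of products of the form $u(t-s,\e^y\cdot)\,u(t-s,(1-\e^y)\cdot)$.

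The heart of the argument, and the main obstacle, is an algebraic identity: after regrouping, the coefficients of this fixed-point equation can be rewritten purely in terms of $(\kappa,\theta)$, with no separate dependence on the individual characteristics $(\sigma, c, \Lambda)$. This collapse mirrors the very definition \eqref{Ch4:eq:kappa} of $\kappa$ as the sum of $\Phi$ and an integral against $(1-\e^y)^q\,\Lambda(dy)$, and is precisely the regrouping carried out in the self-similar case of \cite{S:2}; the additional deterministic drift $-\theta\log x$ that appears in the OU generator affects only the continuous evolution, so it is inert under the Poissonian compensation and does not disrupt this identity. Once the identity is established, standard uniqueness for the resulting fixed-point equation yields $u = \tilde u$, hence the equality of one-dimensional marginals and, by the reduction in the first step, of all finite-dimensional distributions.
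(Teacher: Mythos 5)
Your plan is a genuinely different route from the paper, which proves the lemma not via a fixed-point equation but via a coupling argument: using Proposition 2.5 in \cite{S:2} it builds a \emph{switching transform} pair of L\'evy processes $\xi,\tilde\xi$ that agree up to a random time $\tb$ at which $\exp(\xi(\tb))+\exp(\tilde\xi(\tb))=\exp(\xi(\tb-))$, upgrades the pair $(X,\tilde X'')$ to a \emph{bifurcator} in the sense of Definition 3.7 in \cite{S:2}, and then applies Theorem 3.7 of \cite{S:2} together with Lemma~\ref{Ch4:lem:OU}. The central step of your outline is, however, a gap: the assertion that ``the coefficients of this fixed-point equation can be rewritten purely in terms of $(\kappa,\theta)$'' is the entire non-trivial content of the lemma, and the sketch does not establish it. The recursion you describe has the form
\[
u(t,f)=\Em_1\bigg[\e^{-f(\Xc_\emptyset(t))}\prod_{s\leq t:\,\Delta\Xc_\emptyset(s)\neq 0}u\Big(t-s,\,f\big((-\Delta\Xc_\emptyset(s))^{\e^{-\theta(t-s)}}\cdot\big)\Big)\bigg],
\]
an expectation over the \emph{joint} law of the cell position $\Xc_\emptyset(t)$ and the entire point process of its jumps. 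That joint law is governed by $\Phi$ and $\Lambda$ separately, not merely by the combination $\kappa(q)=\Phi(q)+\int(1-\e^y)^q\Lambda(dy)$; the paper itself recalls, just before Lemma~\ref{Ch4:lem:OU}, that $\kappa$ does \emph{not} determine the law of $\xi$. Thus the recursions for $u$ and $\tilde u$ are two different expectations, and no amount of ``regrouping'' at the level of the equation, nor uniqueness for each equation separately, by itself identifies $u$ with $\tilde u$. The statement is true because the parent cell (which jumps to $x\e^y$) and the daughter (born at $x(1-\e^y)$) play interchangeable roles in the subsequent evolution, $\kappa$ being precisely the quantity invariant under that swap; making this symmetry operative at the level of the whole cell system is exactly what the switching transform / bifurcator does, and it is what your outline omits. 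A workable alternative to the coupling would be to prove that the ranked process $\Xsd$ solves a well-posed martingale problem whose generator depends only on $(\kappa,\theta)$, but that demands establishing the generator identification and uniqueness, not merely ``standard uniqueness for the fixed-point equation.''

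Two further caveats. The phrase ``first jump time of $\Xc_\emptyset$'' is ill-defined when $\Lambda$ is infinite: the jumps of the cell are then dense in time and the construction ranks daughters by the decreasing $F$-order of \eqref{eq:F}, not chronologically, so one must work directly with the compensation formula rather than a first-jump decomposition. And the reduction of finite-dimensional distributions to one-dimensional marginals presupposes a branching Markov property for the ranked process $\Xsd$ itself, which in the paper appears only downstream (via Proposition~\ref{Ch4:prop:MOU}) of the present lemma; it is true and can be argued from the cell-system genealogy together with the strong Markov property of OU type processes, but it needs to be proved, not taken for granted.
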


\begin{proof}
In order to apply Theorem~3.7 in \cite{S:2}, we introduce the following manipulation. Since $X$ and $\tilde{X}$ have the same cumulant $\kappa$, by Proposition~2.5 in \cite{S:2} we can build a pair of spectrally negative L\'evy processes $\xi$ and $\tilde{\xi}$ with respective Laplace exponents $\Phi$ and $\tilde{\Phi}$, such that $\xi$ is a \emph{switching transform} of $\tilde{\xi}$, see Lemma~2.2 in \cite{S:2} for the precise meaning. In particular, we have that the switching time $\tb:=\inf \left\{ t\geq 0:~ \xi(t)\neq \tilde{\xi}(t) \right\}$ is almost surely strictly positive and 
$ \e^{\xi(\tau)}+ \e^{\tilde{\xi}(\tau)} =  \e^{\xi(\tau-)}.$
We may assume $\log X$ and $\log \tilde{X}$ (both starting from $0$) are OU type processes associated respectively with $\xi$ and $\tilde{\xi}$ by \eqref{Ch4:eq:solOU}, then $\inf \left\{ t\geq 0:~ X(t)\neq \tilde{X}(t) \right\}$ is equal to $\tau$ and $X(\tau) +\tilde{X}(\tau) = X(\tau-)=\tilde{X}(\tau-)$.
Let $\tilde{X}'$ be an independent copy of $\tilde{X}$ and set
\[  \tilde{X}'' (t):=  \tilde{X} (t)\ind{t< \tb} + \tilde{X}(\tau)^{\exp(-\dOU (t-\tb))} \tilde{X}' (t-\tb)\ind{t\geq \tb} ,\qquad t\geq 0.\]
Using \eqref{Ch4:eq:solOU} and the strong Markov property of an OU type process, one easily checks that $\tilde{X}'' \eqdis \tilde{X} $ and further the couple $(X , \tilde{X}'')$ satisfies the following properties: 
 	\begin{enumerate}[label=$\mathbf{(B\arabic{*})}$, ref=$\mathbf{(B\arabic{*})}$,leftmargin=3.0em]
 		\item\label{B1} Let $\tb:= \inf \{t\geq 0: X (t) \neq  \tilde{X} (t) \}$. There is almost surely either $\tb=\infty$ or the identity
 		\begin{equation*}
 		X (\tb)+ \tilde{X}'' (\tb) = X (\tb-)= \tilde{X}'' (\tb-).
 		\end{equation*}
 		\item\label{B2}  (Asymmetric Markov branching property)
 		Conditionally on $\tb>t$, the process 
 		\[(X (r+t) X (t)^{-\exp(-\dOU t)}, \tilde{X}'' (r+t) \tilde{X}''(t)^{-\exp(-\dOU t)} )_{r\geq 0} \]
 		is a copy of $(X ,\tilde{X})$; conditionally given $\tb\leq t$, the two processes $(X (r+t)X (t)^{-\exp(-\dOU t)})_{r\geq 0}$ and $(\tilde{X}'' (r+t)\tilde{X}''(t)^{-\exp(-\dOU t)})_{r\geq 0} $ are independent, and have the laws of $X$ and $\tilde{X}''$ respectively. 
 	\end{enumerate}
Therefore, we find that $(X , \tilde{X}'' )$ is a  \emph{bifurcator} in the sense of Definition~3.7 in \cite{S:2}. 
Combining this and Lemma~\ref{Ch4:lem:OU}, we check that the conditions of Theorem~3.7 in \cite{S:2} are fulfilled, then it follows that $\Xsd$ and $\tilde{\Xs}$ have the same finite-dimensional distributions. 
\end{proof}

\subsection{Binary OU type growth-fragmentations and Markovian growth-fragmentations}
\begin{dfn}\label{Ch4:dfn:binary}
  A \emph{binary dislocation measure} $\nu$ is a sigma-finite measure on $\Sd$ that satisfies \eqref{Ch4:eq:nu} and has support on
\begin{equation}\label{Ch4:eq:b}
   \left\{ \sd\in \Sd~:~ s_1 +s_2=1  , s_3 = s_4 =\ldots = 0 \right\}\bigcup \{\partialX \}. 
\end{equation}
An OU type growth-fragmentation process is \emph{binary}, if its dislocation measure is binary. 
\end{dfn} 
In this subsection we study the relation between Markovian growth-fragmentations and OU type growth-fragmentation processes. 
We first observe that each binary OU type growth-fragmentation can be viewed as a Markovian growth-fragmentation in the following sense.  
\begin{prop}\label{Ch4:prop:OU1}
Let $\Xsd$ be a binary OU type growth-fragmentation with cumulant $\kappa$ defined in \eqref{Ch4:eq:kappaOU}, and $X_*$ be the size of the selected fragment of $\Xsd$. 
Then the cumulant of $X_*$ defined by \eqref{Ch4:eq:kappa} is also $\kappa$. Furthermore, $\Xsd$ is a Markovian growth-fragmentation associated with $X_*$.
\end{prop}
\begin{proof}
Recall from Lemma~\ref{Ch4:lem:selected} that $X_*$ evolves as the exponential of an OU type process with characteristics $(\Phi_*, \theta)$. Write $\Lambda_*$ for the L\'evy measure of $X_*$ and $\nu$ for the dislocation measure of $\Xsd$, then we have the identity
 \[
 \int_{(-\infty,0)} (1-\e^{y})^q \Lambda_*  (\dd y) =  \int_{\Sd\setminus \{(0,0,\ldots)\} } (1-s_1)^q \nu (\dd \sd) = \int_{\Sd} \bigg(\sum_{i=2}^{\infty} s_i^q \bigg)\nu (\dd \sd) ,  
 \]
 where the second equality follows from the fact that $\nu$ is binary.  
This leads to the first statement of the proposition.  

The proof of the second statement is an adaptation of arguments in \cite[Proof of Proposition~3]{Bertoin:GF-Markovian}.
For any $\levOU>0$, consider the truncated system $\cutOUd{\Xs}$ (see Lemma~\ref{Ch4:lem:cut} and Definition~\ref{Ch4:dfn:BOUP}). Note that the select fragment of $\cutOUd{\Xs}$ has the same size evolution $X_*$ as $\Xs$.  
Moreover, the system $\cutOUd{\Xs}$ has a discrete genealogical structure (corresponding to Definition~\ref{Ch4:dfn:BOUPF}).  
 Since the select fragment of $\cutOUd{\Xs}$ is obtained by keeping the larger child and discarding the smaller one at each dislocation, the dynamics of $\cutOU{\Xs}$ can be described in the following way. 
Let $\Pm_x$ be the law of the process $(x^{\exp(-\dOU t)} X_*(t))_{t\geq 0}$.
Initially, there is one fragment whose size evolves according to $X_*$ of law $\Pm_1$. 
By \eqref{Ch4:eq:cut}, the first dislocation of the system $\cutOUd{\Xs}$ happens at the first time $t\geq 0$ when $\frac{|\Delta X_*(t)|}{X_*(t-)} >\e^{-\levOU}$, and a child cell is born with initial size $y:=|\Delta X_*(t)|$. 
After this branching event, the parent continues to evolve as $X_*$ and the child cell size proceeds independently of its parent, according to a process $X'_*$ of law $\Pm_y$. Furthermore, a new cell is generated at the first time $t'>t$ when $X_*$ has a jump such that 
$\frac{|\Delta X_*(t')|}{X_*(t'-)} >\e^{-\levOU}$ or $X'_*$ has a jump such that 
$\frac{|\Delta X'_*(t')|}{X'_*(t'-)} >\e^{-\levOU}$. This cell proceeds and produces offspring in a similar way, 
independently of the others. 
Iterating this argument, we produce all particles of $\cutOUd{\Xs}$. 
We hence conclude that $\cutOUd{\Xs}$ can be viewed as a truncated cell system in the sense of this section, associated with $X_*$, in which each child cell (together with its descendants) is killed whenever its size at birth is less than or equal to $\e^{-\levOU}$ times the size of the parent right before the birth of this child. Letting $\levOU \to \infty$, the claim follows from the monotonicity.
\end{proof}

\begin{cor}\label{Ch4:cor:binary}
Suppose that $\theta>0$. The law of a binary OU type growth-fragmentation $\Xsd$ is characterized by $(\kappa, \dOU)$. 
\end{cor}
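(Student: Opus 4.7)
The plan is to combine Lemma~\ref{Ch4:lem:OU1} with Lemma~\ref{Ch4:lem:OU2}, with the bridge being the observation that the cumulant $\kappa$ of a binary OU type growth-fragmentation coincides (in the binary case and only in the binary case) with the cumulant, in the sense of \eqref{Ch4:eq:kappa}, of the OU type process that drives it. Concretely, I would argue as follows. By Lemma~\ref{Ch4:lem:OU1}, $\Xsd$ is a Markovian growth-fragmentation driven by its selected fragment $X_*$, which by Lemma~\ref{Ch4:lem:selected} is the exponential of an OU type process $Z_*$ with characteristics $(\Phi_*,\dOU)$, where $\Phi_*$ is given by \eqref{Ch4:eq:Phi-selected}. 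The L\'evy measure of $Z_*$ is $\Lambda_*(dy)$, the image of $\nu$ under $\sd\mapsto\log s_1$, so
\begin{equation}
\int_{(-\infty,0)}(1-\e^{y})^{q}\Lambda_*(dy)=\int_{\Sd}(1-s_1)^{q}\nu(d\sd), \qquad q\geq 0.
\end{equation}

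Next I would use the binary assumption to identify the two cumulants. Because $\nu$ is concentrated on pairs $(s_1,1-s_1,0,\ldots)$ (together possibly with the point $\partialX$), one has $\sum_{i=2}^{\infty}s_i^{q}=(1-s_1)^{q}$ for $\nu$-almost every $\sd$. Substituting into \eqref{Ch4:eq:kappaOU1} yields
\begin{equation}
\kappa(q)=\Phi_*(q)+\int_{\Sd}(1-s_1)^{q}\nu(d\sd)=\Phi_*(q)+\int_{(-\infty,0)}(1-\e^{y})^{q}\Lambda_*(dy),
\end{equation}
which is precisely the cumulant of $X_*$ in the sense of \eqref{Ch4:eq:kappa}. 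In other words, the cumulant of the driving OU type exponential process equals the cumulant $\kappa$ of $\Xsd$.

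Now suppose $\Xsd$ and $\tilde{\Xsd}$ are two binary OU type growth-fragmentations sharing the same pair $(\kappa,\dOU)$. By what precedes, they can be realized as Markovian growth-fragmentations driven by exponential OU type processes $X_*$ and $\tilde X_*$, each with common index $\dOU$ and common cumulant $\kappa$. Lemma~\ref{Ch4:lem:OU2} then ensures that $\Xsd$ and $\tilde{\Xsd}$ have the same finite-dimensional distributions; since both processes admit c\`adl\`ag versions in $\co$ by Proposition~\ref{Ch4:prop:cadlag}, the finite-dimensional distributions determine the law on the Skorokhod path space, and the corollary follows.

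The only nontrivial step is the identification of the two cumulants, which crucially uses the binary form of $\nu$: in the non-binary case the term $\sum_{i\geq 2}s_i^{q}$ no longer reduces to $(1-s_1)^{q}$, so $\kappa$ would carry information about the joint law of $(s_1,s_2,\ldots)$ that cannot be recovered from the one-dimensional marginal driving the selected fragment. Once this identification is made, the result is immediate from the two preceding lemmas.
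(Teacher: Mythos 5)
Your argument is correct in its main thrust and reaches the conclusion, but it differs from the paper's proof in one instructive way, and it omits one half of the claim.

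The paper's proof routes, like yours, through Lemma~\ref{Ch4:lem:OU1}: a binary OU type growth-fragmentation is the Markovian growth-fragmentation driven by its selected fragment. But the paper then asserts that two binary OU type growth-fragmentations with the same $(\kappa,\dOU)$ have selected fragments \emph{with the same law} (i.e.\ that in the binary case $\kappa$ determines $\Phi_*$ itself), and concludes directly. You instead observe that the cumulant of the OU type growth-fragmentation in the sense of \eqref{Ch4:eq:kappaOU} agrees, in the binary case, with the cumulant of the selected fragment in the sense of \eqref{Ch4:eq:kappa}, and then invoke Lemma~\ref{Ch4:lem:OU2}. This is a cleaner and more robust bridge: it does not require the driving OU type processes $X_*$ and $\tilde X_*$ to have the same law, only the same cumulant, which is precisely the hypothesis of Lemma~\ref{Ch4:lem:OU2}. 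The identification $\kappa=\Phi_*+\int(1-\e^y)^q\Lambda_*(dy)$ is the key observation, and it is correct; it is also exactly what Proposition~\ref{Ch4:prop:MOU} and the surrounding remarks make explicit elsewhere, though the paper's proof of the corollary does not spell it out.

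One small imprecision worth noting: the pointwise identity $\sum_{i\geq 2}s_i^q=(1-s_1)^q$ fails at $\sd=\partialX$ (the left-hand side is $0$, the right-hand side is $1$), so it is not literally a $\nu$-a.e.\ identity if $\nu(\{\partialX\})>0$. Likewise, $\Lambda_*$, being a measure on $(-\infty,0)$, carries no mass from $\partialX$, so $\int(1-\e^y)^q\Lambda_*(dy)=\int_{\Sd}\ind{s_1>0}(1-s_1)^q\nu(d\sd)$, not $\int_{\Sd}(1-s_1)^q\nu(d\sd)$. These two discrepancies cancel, so your chain $\kappa(q)=\Phi_*(q)+\int(1-\e^y)^q\Lambda_*(dy)$ is correct, but the two intermediate equalities as written are each off by the $\partialX$ contribution; you should restrict to $\{s_1>0\}$ throughout.

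Finally, you only prove one implication: same $(\kappa,\dOU)$ implies same law. The paper also records the converse (same law implies same $(\kappa,\dOU)$), which follows immediately from \eqref{Ch4:eq:mean} together with the OU scaling property \ref{Ch4:P2}; it would be worth adding a sentence for completeness.
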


\begin{proof}
Suppose that another binary OU type growth-fragmentation $\tilde{\Xs}$ also has index $\dOU$ and cumulant $\kappa$. 
Then it follows from Proposition~\ref{Ch4:prop:OU1} and Lemma~\ref{Ch4:lem:OU2} that $\tilde{\Xs}$ and $\Xsd$ have the same finite-dimensional distributions. Thus, the two processes $\tilde{\Xs}$ and $\Xsd$ have the same law because of the c\`adl\`ag property. 

Conversely, suppose that an OU type growth-fragmentation $\tilde{\Xs}$ have the same law as $\Xsd$.
Since it follows directly from Theorem~\ref{Ch4:thm:mean} that, for any $q>2$ such that $\kappa'(q)\ne 0$, there are the identities 
\[
\kappa(q) = \partial_t \log \mathbb{E} \left. \left[ \sum_{i=1}^{\infty} X_i(t)^q \right] \right|_{t=0}
\quad \text{and} \quad
\theta =-\frac{1}{\kappa'(q) q }\partial^2_{tt} \log \mathbb{E} \left. \left[ \sum_{i=1}^{\infty} X_i(t)^q \right] \right|_{t=0}, 
\]
we conclude that $\tilde{\Xs}$ and $\Xsd$ have the same index $\dOU$ and cumulant $\kappa$. 
\end{proof}

Conversely, each Markovian growth-fragmentation driven by an exponential OU type process is a binary OU type growth-fragmentation. 
\begin{prop}\label{Ch4:prop:MOU}
Let $Z$ be any OU type process with index $\theta>0$ and define its cumulant $\kappa$ by \eqref{Ch4:eq:kappa}. Then the Markovian growth-fragmentation $\Xsd := (X_1(t), X_2(t),\ldots)_{t\geq 0} $ associated with $\exp(Z)$ is a version of a binary OU type growth-fragmentation characterized by $(\kappa, \dOU)$. 
In particular, $\Xsd$ possesses a c\`adl\`ag version in $\co$ and for every $t\geq 0$ and $q\geq 2(1\vee \e^{\dOU t})$
$$\Exp{ \sum_{i=1}^{\infty}X_i(t)^q }  = \expp{ \int_{0}^t \kappa(q\e^{-\dOU s}) \dd s}<\infty.$$
\end{prop}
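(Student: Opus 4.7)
The plan is to identify $\Xsd$ with a carefully chosen binary OU type growth-fragmentation $\tilde{\Xsd}$ and then invoke Lemma~\ref{Ch4:lem:OU1} together with the characterization result Lemma~\ref{Ch4:lem:OU2}. Let $(\sigma_{\xi}, c_{\xi}, \Lambda, k)$ denote the characteristics of the L\'evy process $\xi$ driving $Z$. I would first construct a dislocation measure $\nu$ supported on the binary set \eqref{Ch4:eq:b} by setting
\[
\nu := k\,\delta_{\partialX} + \Lambda\circ \phi^{-1}, \qquad \phi(y) := \bigl(\max(\e^{y},1-\e^{y}),\,\min(\e^{y},1-\e^{y}),\,0,0,\ldots\bigr).
\]
Using $\sum_{i} s_i^q = \e^{qy}+(1-\e^{y})^q$ on the non-degenerate atoms and comparing \eqref{Ch4:eq:Phi}, \eqref{Ch4:eq:kappa} with \eqref{Ch4:eq:kappaOU}, one checks that with $\sigma' := \sigma_{\xi}$ and a drift $c' := c_{\xi} - k + \int_{(-\infty,-\log 2)}(1-2\e^{y})\,\Lambda(dy)$ (the correction offsets the size-reordering enforced by $\phi$), the OU type growth-fragmentation with characteristics $(\sigma',c',\nu,\dOU)$ has cumulant exactly $\kappa$. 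The integrability condition \eqref{Ch4:eq:nu} follows from the L\'evy measure condition \eqref{Ch4:eq:mLevy} and the bound $\min(\e^{y},1-\e^{y})^2\leq (1-\e^{y})^2$, so such a binary OU type growth-fragmentation $\tilde{\Xsd}$ is well-defined through Definition~\ref{Ch4:defn:OUGF}.

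Next I would compare the selected fragment of $\tilde{\Xsd}$ with $\exp(Z)$. By Lemma~\ref{Ch4:lem:selected}, this selected fragment equals $X_{*}=\exp(Z_{*})$, where $Z_{*}$ is an OU type process of index $\dOU$ with Laplace exponent $\Phi_{*}$ as in \eqref{Ch4:eq:Phi-selected}; its driving L\'evy measure $\Lambda_{*}$ is the image of $\nu|_{\Sd\setminus\{\partialX\}}$ under $\sd\mapsto\log s_1$ (the $\partialX$ atom contributing the killing rate $k$). The binary structure gives $1-s_1=s_2=\sum_{i\geq 2}s_i$, so the change of variable $z=\log s_1$ yields
\[
\Phi_{*}(q) + \int_{(-\infty,0)}(1-\e^{z})^{q}\,\Lambda_{*}(dz)
\;=\;\Phi_{*}(q)+\int_{\Sd}\sum_{i\geq 2}s_i^{q}\,\nu(d\sd)\;=\;\kappa(q),
\]
where the last equality is the identity \eqref{Ch4:eq:kappaOU1}. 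Hence $X_{*}$ and $X=\exp(Z)$ are exponentials of OU type processes that share the same index $\dOU$ and the same cumulant $\kappa$. Lemma~\ref{Ch4:lem:OU2} then implies that the Markovian growth-fragmentations driven by $X_{*}$ and $X$ have identical finite-dimensional distributions; by Lemma~\ref{Ch4:lem:OU1} the former is $\tilde{\Xsd}$, while the latter is $\Xsd$ by construction, so $\Xsd$ and $\tilde{\Xsd}$ agree in finite-dimensional distributions.

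The remaining assertions are immediate consequences of this identification. Applying Theorem~\ref{Ch4:thm:mean} with $\alpha=2\in\dom$ to $\tilde{\Xsd}$ yields the moment formula for every $q\geq 2(1\vee\e^{\dOU t})$, and Proposition~\ref{Ch4:prop:cadlag} supplies a c\`adl\`ag version in $\co$; both properties transfer to $\Xsd$ through the equality in finite-dimensional distributions. The main delicate point I anticipate is the cumulant matching in Step~1: the ordering constraint $s_1\geq s_2$ forces $\phi$ to be piecewise defined on the two regimes $y\gtrless -\log 2$ and introduces the drift correction above, and one must also keep track of how the atom at $\partialX$ accounts for the killing rate $k$. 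Beyond this bookkeeping, the argument is a mechanical combination of the preceding lemmas.
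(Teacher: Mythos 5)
Your proof is correct and follows the same strategy as the paper's: combine Lemma~\ref{Ch4:lem:selected}, Lemma~\ref{Ch4:lem:OU1}, Lemma~\ref{Ch4:lem:OU2}, and Theorem~\ref{Ch4:thm:mean}. The only difference is that you spell out the existence of a binary OU type growth-fragmentation with cumulant $\kappa$ (via the explicit choice of $\nu = k\,\delta_{\partialX}+\Lambda\circ\phi^{-1}$ and the corrected drift $c'$), whereas the paper's proof treats that existence as given and relegates the explicit characteristics to the remark that follows; your drift correction and cumulant verification match that remark exactly, so this is a welcome filling-in of a step the paper leaves implicit rather than a genuinely different route.
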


\begin{proof}
 Write $(\sigma, c, \Lambda,k, \dOU)$ for the characteristics of $Z$.
 Let $\nu_2$ be the image of $\Lambda$ by the map $z\mapsto (\max(\e^z, 1-\e^z), \min(\e^z, 1-\e^z), 0,\ldots)$, then $\nu_2 +k\delta_{\partialX}$ is a binary dislocation measure in the sense of Definition~\ref{Ch4:dfn:binary}, and thus there exists a binary OU type growth-fragmentation $\Xsd'$ with characteristics 
 \[
 \Big(\sigma,~ c-k+ \int_{(-\infty, -\log 2)} (1-2\e^{y}) \Lambda(\dd y),~ \nu_2 +k\delta_{\partialX}, \theta\Big).
 \] 
 A straightforward calculation shows that $\Xsd'$ has the same cumulant $\kappa$ as $Z$. 
Combining Proposition~\ref{Ch4:prop:OU1} and Lemma~\ref{Ch4:lem:OU2}, we deduce that $\Xsd'$ has the same finite-dimensional distributions as $\Xsd$.
We complete the proof by applying Theorem~\ref{Ch4:thm:mean} to $\Xsd'$. 
\end{proof}

\begin{rem}
Let $\tilde{X} $ be an OU type process with characteristics $(\tilde{\Phi},\tilde{\dOU})$, $\Xs $ and $\tilde{\Xs} $ be two Markovian growth-fragmentations driven respectively by $X$ and $\tilde{X}$. Then the following statements are equivalent: 
  \begin{enumerate}[label=(\roman*)]
  \item $\kappa = \tilde{\kappa}$ and $\dOU =\tilde{\dOU}$;
  \item $X$ and $\tilde{X}$ can be coupled to form a bifurcator that satisfies \ref{B1} and \ref{B2} in the proof of Lemma~\ref{Ch4:lem:OU2};
  \item the growth-fragmentations $\Xs $ and $\tilde{\Xs} $ have the same finite dimensional distributions.
  \end{enumerate}
  Indeed, we have already obtained ``$(i) \Rightarrow (ii) $'' and ``$(ii) \Rightarrow (iii) $'' from the proof of Lemma~\ref{Ch4:lem:OU2}. The implication ``$(iii) \Rightarrow (i)$'' follows from Proposition~\ref{Ch4:prop:MOU}. 
 This is an analogous result of Theorem~1.1 (for homogeneous growth-fragmentations) and Theorem~1.2 (for self-similar growth-fragmentations) in \cite{S:2}. 
\end{rem}

\begin{rem}\label{rem:Mgf-o}
When $\theta<0$, the function $F$ as in Lemma~\ref{Ch4:lem:OU} is not well-defined in general, unless $2\e^{\theta r} \in \dom$ for all $r>0$. 
For this case, unfortunately it seems difficult, if at all possible, to find a time-dependent excessive funtion $F'$ in the sense of \cite{S:2}, such that \eqref{Ch4:eq:H1OU} and \eqref{Ch4:eq:HetaOU} hold, and that 
\[
\inf_{r<l, x>a} F'(r,x)>0,	\qquad \text{for every} \quad a,l>0. 
\]
So we cannot apply \cite[Theorem~3.7]{S:2} to proceed the proof of Lemma~\ref{Ch4:lem:OU2} for the outward case. 
However, even without having such a function $F'$, we should still be able to prove Lemma~\ref{Ch4:lem:OU2} and Proposition~\ref{Ch4:prop:MOU} by using a direct approach similar to that in \cite[Propsition~2.15]{S:2}. Roughly speaking, this is done by changing the genealogy in the cell system.  
\end{rem}	

\section{A connection with random recursive trees}\label{Ch4:sec:RRT}
In this section we lift from \cite{BertoinBaur} a certain OU type growth-fragmentation that appears in the destruction of an infinite recursive tree. See also \cite{Moehle:MittagLeffler} for a related work.

An \emph{infinite recursive tree} is a random rooted tree with vertices indexed by $\N$, constructed recursively in the following way. We start with linking the vertex $1$ (the root) to the vertex $2$ by an edge denoted by $e_2$. Then we proceed by induction. For $i\geq 2$, vertex $i$ attaches to a vertex chosen uniformly from $\{1, \ldots, i-1\}$, say $j$, by an edge $e_{i}$. 

We destroy the infinite recursive tree by associating each $e_i$ with an independent exponential clock and breaking each edge when its clock rings. Then the vertices of this tree split into different connected clusters. Let $\Pi(t)= (\Pi_1(t),\Pi_2(t), \ldots)$ be the resulting partition of $\N$ at time $t\geq 0$, such that each $\Pi_i(t)$ is the set of the vertices of a cluster at time $t$, and they are listed in increasing order of the smallest element of the cluster. It has been proven in \cite{BertoinBaur} that
\[
W_i(t):= \lim_{n\to \infty} n^{-\e^{-t}} \#\{ k\leq n ~:~ k\in \Pi_i(t)\} \quad \text{exists for every } i\in \N.
\] 
Furthermore, $(W_i(t), i\in \N)$ can be rearranged in decreasing order, which produces a sequence denoted by $\Xs^{R}(t)$. Partial results of Proposition~2.3 and Theorem~3.1 in \cite{BertoinBaur} can be rewritten in our terms as follows.

\begin{prop}[\cite{BertoinBaur}]\label{Ch4:prop:RRT}
The process $\Xs^{R}$ is a binary OU type growth-fragmentation with characteristics $(\kappa_{R}, 1)$ in the sense of Corollary \ref{Ch4:cor:binary}, where 
\begin{equation*}
  \kappa_{R}(q) =q \psi(q+1) +(q-1)^{-1},\quad q>1, 
\end{equation*}
with $\psi$ denoting the digamma function, that is the logarithmic derivative of the gamma function. 
 Equivalently, $\Xs^{R}$ has characteristics $(0,-\gamma + 2\log 2, \nu,1)$, where $\gamma = 0.57721\ldots$ is the Euler-Mascheroni constant, and the dislocation measure $\nu$ is binary in the sense of Definition~\ref{Ch4:dfn:binary}, specified by 
\[
\nu(\dd s_1) = \left(s_1^{-2} + (1-s_1)^{-2}\right) \dd s_1, \qquad \frac{1}{2}\leq s_1 < 1.
\]
\end{prop}

Then by Proposition~\ref{Ch4:prop:branching} and Theorem~\ref{Ch4:thm:mean}, we recover immediately Theorem~3.4 in \cite{BertoinBaur}, which states the Markov property of $\Xs^{R}$ and that for every $t\geq 0$ and $q> \e^{t}$, there is 
\begin{equation}\label{Ch4:eq:RRT}
  \Exp{\sum_{i=1}^{\infty} X^R_i(t)^q} = \frac{q-1}{\e^{-t}q - 1} \frac{\Gamma(q)}{\Gamma(\e^{-t}q)}.
\end{equation}
Indeed, by the property of the digamma function $\psi$, an easy calculation shows that 
\[\expp{\int_0^t \kappa_R(\e^{-s}q ) \dd s} =\frac{\Gamma(q+1)}{\Gamma(\e^{-t} q+1)}\frac{q-1}{\e^{-t} q-1} \frac{\e^{-t} q}{q} = \frac{q-1}{\e^{-t}q - 1} \frac{\Gamma(q)}{\Gamma(\e^{-t}q)}.\]
Then \eqref{Ch4:eq:RRT} follows from Theorem~\ref{Ch4:thm:mean}. 

 For the readers' convenience, let us briefly justify Proposition~\ref{Ch4:prop:RRT} by using results in \cite{BertoinBaur}.
\begin{proof}[Proof of Proposition~\ref{Ch4:prop:RRT}]
Let $\xi$ be a spectrally negative L\'evy process with characteristics $(0,-\gamma+1, \Lambda,0)$, where the L\'evy measure $\Lambda$ has density 
\[\Lambda(\dd z) = \e^{z} (1-\e^z)^{-2} \dd z, \quad z \in (-\infty, 0).\]
We know from \cite{BertoinBaur} that the Laplace exponent of $\xi$ is $\Phi_R (q) := q \psi(q+1)$. \footnote{The L\'evy-Khintchine formula in \cite{BertoinBaur} has a compensation term different from \eqref{Ch4:eq:Phi}, so the drift coefficient is changed. } We also have that 
\[
 \int_{-\infty}^0 (1-\e^z)^q\e^z(1-\e^z)^{-2}\dd z = \frac{1}{q-1}, \qquad q>1.
\]
So $\xi$ has cumulant $\kappa_{R}$. 

 Write $P_x$ for the law of an exponential OU type process $X$ with characteristics $(\Phi_R, 1)$ starting from $x>0$, then we shall prove that $\Xs^R$ is Markovian growth-fragmentation associated with $X$. In this direction, let us consider a cell system $\Xc$ described as follows.  
Set the Eve process $\Xc_{\emptyset}:=W_1$, the weight process of the cluster $\Pi_1$ (that contains the root $1$). Then $\Xc_{\emptyset}$ has distribution $P_1$ by Theorem~3.1 in \cite{BertoinBaur}. At each jump time of $\Xc_{\emptyset}$, say $s>0$, the partition process $\Pi$ has a dislocation in which the block $\Pi_1(s)$ splits into $B_{1}$ and $B_{2}$, with $B_{1}$ being the block that contains $1$. 
Let $y:= \lim_{n\to \infty} n^{-\e^{-s}} \#\{ i\leq n ~:~ i\in B_2\}$, then we deduce by \cite[Proposition~2.3 and Theorem~3.1]{BertoinBaur} that the weight process 
\[
W^{B_{2}}_1(t):= \lim_{n\to \infty} n^{-\e^{-(t+s)}} \#\{ i\leq n ~:~ i\in \Pi_1(t+s)\cap B_2\}, \qquad t\geq 0
\] 
has conditional distribution $P_y$ given $\Xc_{\emptyset}$. We thus view $W^{B_{2}}_1$ as the daughter process born at the jump time $s$ of $\Xc_{\emptyset}$. In this way we associate each jump time of $\Xc_{\emptyset}$ with a daughter; these daughters are independent one of the others, and form the first generation of the cell system. By iteration of this argument, we obtain a cell system driven by $X$ and hence deduce that $\Xs^R$ is a Markovian growth-fragmentation associated with $X$. So we know from Proposition \ref{Ch4:prop:MOU} that $\Xs^R$ is a binary OU type growth-fragmentation process with characteristics $(\kappa_{R}, 1)$.
\end{proof}

\appendix
\section{Proofs of Lemma~\ref{Ch4:lem:cut} and Lemma~\ref{Ch4:lem:Wselect}}\label{sec:A}

 \begin{proof}[Proof of Lemma~\ref{Ch4:lem:cut}]
 	The proof is an adaptation of the arguments in \cite[Lemma~3]{Bertoin:CF}.
We shall check that $\cutOU{\Zs}$ fulfills Definition~\ref{Ch4:dfn:BOUPF} with a different genealogy. 
 
For every $i\in \N$, let $\bar{1}_{i}:= (1,1,\ldots,1)\in \N^i$, with $\bar{1}_{0}= \emptyset$ by convention.  
With notation in Definition \ref{Ch4:dfn:BOUPF}, we write $\rr_i:= \Delta a_{\bar{1}_i}$ for every $i\in \N$ and derive $\cutOU{\rr_{i}}$ from $\rr_i$ by \eqref{Ch4:eq:cut}. 
As $\rr_{i}$ has the law of $\mu(\cdot \mid \Rdtwo)$, we easily deduce that $\Prob{\cutOU{\rr_{i}}\not\in \Rd_1} = \frac{\cutOU{\mu}(\Rdtwo)}{\mu(\Rdtwo)}$. 
Let $N:=\inf \{i\ge 0:\cutOU{\rr_{i}}\not\in \Rd_1 \}$, then for each $i\leq N-1$, only the closest child of $\bar{1}_i$ is kept in the truncated system $\cutOU{\Zs}$, but the other children are all killed. 
Therefore, at any time before  $b_{\bar{1}_{N}}$, in $\cutOU{\Zs}$ there is only one particle, which shall be viewed as the ancestor $\emptyset$ in the truncated system $\cutOU{\Zs}$. 
At its lifetime $\cutOU{\lambda}_{\emptyset} := b_{\bar{1}_{N}}$, it splits into more than one particles, located at $Z_{\emptyset} + \cutOU{\rr_{N}}$. So we define $\cutOU{\Delta a}_{\emptyset}:=\cutOU{\rr_{N}}$, which is a random variable of law $\cutOU{\mu}(\cdot \mid\Rdtwo)$. 
Since $N$ has the geometric distribution with parameter $\frac{\cutOU{\mu}(\Rdtwo)}{\mu(\Rdtwo)}$, from basic property of exponential random variables, we know that $\cutOU{\lambda}_{\emptyset}$ has the exponential distribution with parameter $\cutOU{\mu}(\Rdtwo)$. 

We next investigate the movement $\cutOU{Z}_{\emptyset}$ of the ancestor $\emptyset$.
Write recursively a sequence $(\tilde{a}_{\bar{1}_{j}})_{j\geq 0}$ such that $\tilde{a}_{\bar{1}_{0}}:=0$ and
$\tilde{a}_{\bar{1}_{j+1}} := \e^{-\dOU \lambda_{\bar{1}_{j}} }\tilde{a}_{\bar{1}_{j}} + Z_{\bar{1}_j}(\lambda_{\bar{1}_j}).$
Then we define a process $Z_{\bar{1}}$ by 
\begin{equation*}
  Z_{\bar{1}}(t):= \e^{-\dOU (t-b_{\bar{1}_j})} \tilde{a}_{\bar{1}_{j}} +Z_{\bar{1}_j}(t- b_{\bar{1}_j}), \qquad \text{for } t\in [ b_{\bar{1}_j} , b_{\bar{1}_j}+\lambda_{\bar{1}_j}) \text{ with }j\geq 0.
\end{equation*}
It follows from the simple Markov property that $Z_{\bar{1}}$ is an OU type process with characteristics $(\psi, \dOU)$. 
We also define a process
\begin{equation*}
  \cutOU{\eta}_{\bar{1}}(t):= \sum_{i = 0}^{j} \e^{-\dOU (t-b_{\bar{1}_i}) }\Delta a_{\bar{1}_i} \ind{\cutOU{\rr_{i}}\in \Rd_1}, \qquad \text{for } t\in [ b_{\bar{1}_j} , b_{\bar{1}_j}+\lambda_{\bar{1}_j}) \text{ with }j\geq 0.
\end{equation*}
 By \eqref{Ch4:eq:solOU}, it is an OU type process associated with a compound Poisson process on $(-\infty, 0)$ with L\'evy measure 
\[\mu(r_1 \in \dd z: ~ \cutOU{\rr}\in \Rd_1, \rr \not\in \Rd_1), \quad z\in (-\infty, 0).\] 
Since $\cutOU{Z}_{\emptyset}$ is the superposition of the two independent processes  
 $Z_{\bar{1}}$  and $\cutOU{\eta}_{\bar{1}}$, we have by Lemma~\ref{Ch4:lem:addOU} that $\cutOU{Z}_{\emptyset}$ is an OU type process with characteristics $(\cutOU{\psi}, \dOU)$, where 
\begin{equation}\label{eq:psil}
    \cutOU{\psi}(q) := \psi(q) + \int_{(-\infty, 0)} (\e^{q z} -1) \mu(r_1 \in \dd z: ~ \cutOU{\rr}\in \Rd_1, \rr \not\in \Rd_1), \qquad q\geq 0.
 \end{equation}
Using the fact that 
\[\int_{\Rd} (1-\e^{r_1})\cutOU{\mu}(\dd \rr) =\int_{\Rd} (1-\e^{r_1})\mu(\dd \rr)\]
and that $\rr\in \Rd_1$ infers $\cutOU{\rr}\in \Rd_1$, we deduce an identity 
\begin{equation*}
\cutOU{\psi}(q) = \frac{1}{2} \sigma^2 q^2 + \left( c + \int_{\Rdtwo} (1-\e^{r_1})\cutOU{\mu}(\dd \rr) \right) q 
 + \int_{\Rd_1}  \left(\e^{qr_1}-1 +  q(1-\e^{r_1}) \right) \cutOU{\mu}(\dd \rr).
 \end{equation*}
By iterating this argument and comparing with Definition~\ref{Ch4:dfn:BOUPF}, we complete the proof.  
 \end{proof}
 
  \begin{proof}[Proof of Lemma~\ref{Ch4:lem:Wselect}]
 From the proof of Lemma~\ref{Ch4:lem:cut}, we readily know that the selected atom $Z_*$ is an OU type process with characteristics $(\cutOU[0]{\psi}, \dOU)$, with $\cutOU[0]{\psi}$ given by \eqref{eq:psil}. One easily checks that $\cutOU[0]{\psi}= \Phi_*$. 
\end{proof}

\section{Proof of Equation~(\ref{eq:gfe-unique}) }\label{sec:B}
It suffices to prove for the case $0\le s+ t\le T_0$. 
For simplicity, write 
\[
C(t, y):=  \e^{-\alpha \e^{\theta s} z} y^{\alpha \e^{\theta(s+t)}} \exp\Big(-\int_s^{s+t} \kappa(\alpha \e^{\theta r}) \dd r\Big) . 
\]
Since $\rho'_{\e^z}$ is a solution to \eqref{Ch4:eq:gf}, we have
\[ 
\prm{ P'_{s,s+t}(z,\cdot)}{ g(s+t,\cdot)} 
=  \prm{ \rho'_{\e^z}(t,\dd y)}{ C(t,y) g(s+t, \log y)} 
= g(s,z) + \int_0^t C(r,y) \prm{ \rho'_{\e^z}(r,\dd y)}{L(s+r,y)} \dd r,  
\] 
where $L(s+r,y):= L_1+ L_2+ \frac{1}{2} \sigma^2 L_3 + \big(c+\frac{1}{2} \sigma^2  - \theta \log y\big)L_4 + L_5$ and 
\begin{align*}
L_1&=  \alpha \e^{\theta (s+r)} \theta \log y~ g(s+r, \log y) + \partial_t g(s+r, \log y)  ,\\
L_2&= -\kappa(\alpha \e^{\theta(s+r)} ) g(s+r, \log y)\\
&= -\bigg( \frac{1}{2} \sigma^2 (\alpha \e^{\theta (s+r)})^2 + \alpha \e^{\theta (s+r)} c+ \int_{\Sd} \Big( \sum_{i=1}^{\infty} s_i^{\alpha \e^{\theta (s+r)}} - 1+ (1-s_1) \alpha \e^{\theta (s+r)} \Big) \nu(\dd \sd) \bigg) g(s+r, \log y)
, \\
L_3&=   \alpha \e^{\theta (s+r)}(\alpha \e^{\theta (s+r)}-1) g(s+r, \log y)  \\
	&  \qquad \qquad +  \alpha \e^{\theta (s+r)}\partial_x g(s+r, \log y)  + (\alpha \e^{\theta (s+r)}-1) \partial_x g(s+r, \log y) +  \partial^2_{xx} g(s+r, \log y),\\
L_4&=  \alpha \e^{\theta (s+r)}g(s+r, \log y)  + \partial_x g(s+r, \log y), \\
L_5 &= \int_{\Sd} \Big( \sum_{i=1}^{\infty} s_i^{\alpha \e^{\theta (s+r)}} g(s+r, \log y +\log s_i) - g(s+r, \log y ) + (1-s_1) L_4 \Big) \nu(\dd \sd). 
\end{align*}
On the other hand, let us write $\mathcal{A} g(s+r, \log y)= A_1 +A_2 +A_3$, where
 \begin{align*}
 A_1&= \partial_t g(s+r, \log y) + \frac{1}{2} \sigma^2 \partial^2_{xx} g(s+r, \log y)\\
A_2&= \Big(c+ \sigma^2 \alpha \e^{\theta (s+r)} - \theta \log y + \int_{\Sd} \big( (1-s_1) -\sum_{i=1}^{\infty} s_i^{\alpha \e^{\theta (s+r)}}(1-s_i) \big ) \nu(\dd \sd)\Big) \partial_{x} g(s+r, \log y) \\
A_3 &=  \int_{\Sd} \Big( \sum_{i=1}^{\infty} s_i^{\alpha \e^{\theta (s+r)}} g(s+r, \log y +\log s_i) \\
&\qquad \qquad  -\sum_{i=1}^{\infty} s_i^{\alpha \e^{\theta (s+r)}} g(s+r, \log y )  
  + \sum_{i=1}^{\infty} s_i^{\alpha \e^{\theta (s+r)}}(1-s_i) \partial_{x} g(s+r, \log y)\Big) \nu(\dd \sd). 
\end{align*}
Comparing these terms, we deduce the identity $L(s+r,y)= \mathcal{A} g(s+r, \log y)$. 
This entails that 
\begin{equation*}
C(t,y)  \prm{ \rho'_{\e^z}(r,\dd y)}{L(s+r,y) }=\prm{ P'_{s,s+r}(z,\cdot)}{ \mathcal{A} g(s+r,\cdot)} , 
\end{equation*}
which ends the proof. 

\section*{Acknowledgement}
The author is very grateful to Jean Bertoin for his guidance and advice throughout this research. The author also warmly thanks Igor Kortchemski, Robin Stephenson, Matthias Winkel and an anonymous referee for their insightful comments which helped to improve the work considerably. This research was partially supported by the p\^ole mathSTIC of Universit\'e Paris 13 and SNSF fellowship P2ZHP2\_171955. 

\bibliographystyle{abbrv}
\bibliography{quanshi}

\end{document}